\newtheorem{teorema}{Theorem}[section]
\newtheorem{proposicion}[teorema]{Proposition}
\newtheorem{lema}[teorema]{Lemma}
\newtheorem{nota}[teorema]{Remark}
\newtheorem{hecho}[teorema]{Fact}
\newtheorem{ejemplo}[teorema]{Example}
\def\P{\mathrm{P}}
\newcommand{\proju}{\P^}
\def\E{\mbox{Emb}\hspace{.2mm}}
\def\immto{\looparrowright}
\def\Spin{\mathrm{Spin}}
\def\Sq{\mathrm{Sq}}
\newcommand{\sq}{\mathrm{Sq}^}
\newcommand{\com}{\mathbb{C}}
\newcommand{\cua}{\mathbb{H}}
\newcommand{\bz}{\mathbb{Z}}
\newcommand{\lra}{\longrightarrow}
\title{On the embedding dimension of 2-torsion \\ lens spaces}
\author{Jes\'us Gonz\'alez, Peter Landweber, and Thomas Shimkus}
\date{\empty}
\begin{document}
\maketitle

\begin{abstract}
Using the $ku$- and $BP$-theoretic versions of Astey's cobordism 
obstruction for the existence of smooth Euclidean embeddings 
of stably almost complex manifolds, we prove that, for $e$ greater 
than or equal to $\alpha(n)$---the number of ones in the dyadic 
expansion of $n$---, the ($2n+1$)-dimensional $2^e$-torsion lens 
space cannot be embedded in Euclidean space of dimension $4n-2\alpha(n)+1$. 
A slightly restricted version of this fact holds for $e<\alpha(n)$. 
We also give an inductive construction
of Euclidean embeddings for $2^e$-torsion lens spaces. 
Some of our best embeddings are within one dimension of being optimal.
\end{abstract}

\noindent {\it 2000 MSC:} 57R40, 19L41, 55S45.

\noindent {\it Keywords and phrases:} Euclidean embeddings of lens spaces, 
connective complex $K$-theory, Brown-Peterson theory, Euler class, modified 
Postnikov towers.

\section{Main results}\label{start}
For a positive integer $m$
let $\nu(m)$ and $\alpha(m)$ denote, respectively, 
the exponent in the highest power of $2$ dividing $m$, and
the number of ones in the dyadic expansion of $m$.
Let $L^{2m+1}(2^e)$ stand for the ($2m+1$)-dimensional $2^e$-torsion lens 
space, the quotient of $S^{2m+1}$ by the standard (diagonal) 
action of $\mathbb{Z}/2^e$ (viewed as a subgroup of the unit 
circle $S^1$). 

\smallskip
Unless explicitly noted otherwise, all (non)embedding results are 
to be understood in the smooth sense. Yet, except for a few 
low-dimensional cases (carefully pinpointed in the text), 
our results are within
Haefliger's metastable range $2m\ge 3(n+1)$ where, for a smooth closed 
manifold $M^n$, the existence of a topological embedding 
$M^n\subset\mathbb{R}^m$ is equivalent to the existence of a smooth embedding
$M^n\subset\mathbb{R}^m$.

\smallskip
We study the (Euclidean) embedding dimension of $L^{2m+1}(2^e)$, 
that is, the dimension of the smallest Euclidean 
space where this manifold can be embedded.
We get no new (non)embeddings for $e=1$ 
(although we reconstruct some of the known optimal ones), 
as our methods are generalizations of ideas already used for real 
projective spaces. 
Instead, we seek to understand the role the exponent $e$ plays 
in determining the Euclidean embedding dimension of $L^{2m+1}(2^e)$.

\smallskip
Our first result is the following 
analogue of Astey's nonembedding theorem for real projective spaces. 

\begin{teorema}\label{main} Let $n$ and $e$ be positive integers with 
\begin{equation}\label{cota}
e\ge\min\{\alpha(n)-6,\alpha(n)+1-2^{\nu(n)}\},
\end{equation}
and set $\delta=\max\{0,\alpha(n)-e\}$. 
Then the lens space  $L^{2(n+\delta)+1}(2^e)$ 
does not admit an embedding in $\mathbb{R}^{4n-2\alpha(n)+1}$.
\end{teorema}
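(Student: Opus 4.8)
The plan is to derive a contradiction from a hypothetical smooth embedding $L\subset\mathbb R^{N}$, where I abbreviate $m=n+\delta$, $L=L^{2m+1}(2^{e})$ and $N=4n-2\alpha(n)+1$, by computing the connective $K$-theoretic and $BP$-theoretic forms of Astey's cobordism obstruction for $L$. The first step is to record the stably almost complex structure on $L$: with $\xi$ the canonical complex line bundle one has $TL\oplus\mathbb R\cong(m+1)\xi_{\mathbb R}$, so the stable normal bundle of $L$ is $-(m+1)\xi_{\mathbb R}$, hence $ku$- and $BP$-orientable. An embedding $L\subset\mathbb R^{N}$ would have normal bundle $\nu$ of real dimension $N-(2m+1)=2c$ with $c=n-\alpha(n)-\delta$, and $TL\oplus\nu\cong\underline{\mathbb R^{N}}$ pins down the stable class of $\nu$, and therefore the $ku$- and $BP$-theoretic characteristic classes entering the obstruction, as explicit expressions in $\mu:=\xi-1$ whose coefficients are universal functions of $m$ and $N$.

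The second step invokes Astey's obstruction. In the form I use it, it measures --- in $ku$- and $BP$-homology, via the Spanier--Whitehead dual of $L$ --- the obstruction to the stable normal data of $L$ being carried by the Thom complex of an honest $2c$-plane bundle of dimension $N$, as it would be were $L$ embeddable in $\mathbb R^{N}$; it depends only on $L$ and $N$, and it vanishes whenever $L\subset\mathbb R^{N}$. To evaluate it one needs the $\mathbb Z[v]$-module $ku^{*}(L^{2m+1}(2^{e}))$, in particular the $v$-torsion orders of the classes $v^{i}\eta^{j}$ with $\eta=c_{1}^{ku}(\bar\xi)\in ku^{2}(L)$, which is available from the known calculation of connective $K$-theory of $2$-torsion lens spaces. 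Feeding the normal data of Step~1 into this module structure, the non-vanishing of the obstruction reduces to the assertion that a definite integer, assembled from the binomial coefficients in the normal characteristic classes, is \emph{not} divisible by a certain power of $2$; the restriction on $e$ and the codimension deficit $\dim L-2c$ are recorded precisely in that power of $2$.

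The third step is arithmetic, and it is where $\alpha(n)$ emerges: by Kummer's theorem the governing $2$-adic valuation is, in its archetypal instance, $\nu\!\left(\binom{2n}{n}\right)=\alpha(n)$, which is exactly what places the non-embedding threshold at $N=4n-2\alpha(n)+1$. For $e\ge\alpha(n)$ one may take $\delta=0$ throughout, so the argument already yields that $L^{2n+1}(2^{e})$ does not embed in $\mathbb R^{4n-2\alpha(n)+1}$, the clean statement announced in the abstract. For $e<\alpha(n)$ the $ku$-obstruction falls just short, and I would rerun the argument with $BP$ in place of $ku$, using the higher polynomial generators $v_{2},v_{3},\dots$ to provide the divisibility that $v=v_{1}$ alone cannot, and passing from $L^{2n+1}(2^{e})$ to the larger lens space $L^{2(n+\delta)+1}(2^{e})$ precisely because its $BP$-module structure exhibits the $v$-torsion orders the sharper obstruction needs, the minimal admissible enlargement being $\delta=\alpha(n)-e$.

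The main obstacle, as always in this circle of ideas, is the $BP$-theoretic bookkeeping: controlling how many of the generators $v_{i}$ are actually required, for which residues of $n$ modulo powers of $2$, and how $\delta$ interacts with the $v$-torsion orders in $BP^{*}(L^{2m+1}(2^{e}))$. This is what produces the two-sided hypothesis \eqref{cota}, with $\alpha(n)-6$ marking the limit of what a fixed finite family of the $v_{i}$ can contribute and $\alpha(n)+1-2^{\nu(n)}$ reflecting the gain available when $n$ is divisible by a large power of $2$. A secondary point is that the codimension $2c$ lies below the stable range, so one argues throughout with the $ku$- and $BP$-orientations furnished by the stable complex structure rather than with an honest complex structure on $\nu$. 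Finally, outside the finitely many low-dimensional exceptions flagged in the text, one checks that $(\dim L,N)$ stays inside Haefliger's metastable range, so that the smooth non-embedding is equivalent to the topological one.
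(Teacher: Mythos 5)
You have the paper's strategy: invoke Astey's cobordism obstruction to Euclidean embeddings of stably almost-complex manifolds, specialize to $ku^{*}$ when $\delta=0$ and to $BP^{*}$ when $\delta>0$, and extract the threshold $4n-2\alpha(n)+1$ from a $2$-adic analysis of binomial coefficients governed by the torsion in $h^{*}(L^{2m+1}(2^{e}))$. The paper's proof is essentially a plug-in: the $h^{*}$-Euler-class nonvanishing that Astey's criterion requires is exactly the main theorem of the first author's earlier $ku$- and $BP$-papers, and the K\"unneth condition is their Proposition~3.1.

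Where your account goes astray is in the description of Astey's obstruction itself, and the imprecision is not cosmetic. In the form the paper uses, the obstruction is the nontriviality of the $h^{*}$-Euler class of $\nu\otimes\xi_{2k}$ computed over the \emph{product} $M\times\mathrm{P}^{2k}$, where $\xi_{2k}$ is the canonical real line bundle over the real projective space $\mathrm{P}^{2k}$, $\nu$ is a complex bundle of complex dimension $d$ representing the stable normal bundle, and $k=d-\ell$ for a hypothetical codimension-$2\ell$ embedding. The auxiliary projective factor and the twist by $\xi_{2k}$ encode the reflection structure inherent in an embedding, and they are precisely what separates this criterion from a geometric-dimension argument. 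Your ``Spanier--Whitehead dual of $L$ carried by the Thom complex of an honest $2c$-plane bundle'' formulation refers only to $L$ itself; taken at face value it is the kind of criterion one uses for nonimmersion, and the paper's Remark~2.1 is explicit that it is Astey's theorem applied to the same Euler class over $M\times\mathrm{P}^{2k}$ that upgrades the nonimmersion in $\mathbb{R}^{4n-2\alpha(n)}$ to the nonembedding in $\mathbb{R}^{4n-2\alpha(n)+1}$. You also leave out the second half of Astey's hypothesis, that $h^{\mathrm{even}}(M)\otimes_{h_{*}}h^{*}(\mathrm{P}^{2k})\to h^{\mathrm{even}}(M\times\mathrm{P}^{2k})$ be an isomorphism; it holds for these lens spaces but is something to verify, not a formality. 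Neither point derails your overall plan, but as written your Step~2 does not account for the one extra Euclidean dimension the theorem claims over the companion nonimmersion result.
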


The more pleasant  situation holds for $\delta=0$ 
(what we call a {\it high-torsion} lens space), where~(\ref{cota})
holds for free, with Theorem~\ref{main} affirming the 
impossibility of embedding $L^{2n+1}(2^e)$ in $\mathbb{R}^{4n-2\alpha(n)+1}$.
For $\delta>0$ (a {\it low-torsion} lens space), the conclusion is
similar, but besides the extra\footnote{Motivated
by the situation for $e=1$ in~\cite{Asteyembedding}, 
it was conjectured in~\cite{cfimm} that the hypothesis~(\ref{cota}) 
can be removed in the low-torsion case.} hypothesis~(\ref{cota}), 
one needs to add the little $(2\delta)$-correcting term 
to the dimension of the lens space.

\smallskip
Theorem~\ref{main} is better (and usually much stronger) 
than previous nonembedding results with $e\geq2$. 
For instance, the best nonembedding that~\cite{KS} gives for $L^{2n+1}(2^e)$ 
(using Atiyah's $\gamma$-operations) is in dimension
roughly $3n+1$. Notable exceptions are the (non)immersion results 
in~\cite{w.tom,tom} (but Theorem~\ref{main} applies for any $\alpha(n)$).

\smallskip
As for positive results, we start by recalling that 
the Haefliger-Hirsch-Massey-Peterson (HHMP) 
general embedding result (\cite{MP}, 
see also~\cite{VG} for the case of odd~$n$) gives  
an embedding $L^{2n+1}(2^e)\subset \mathbb{R}^{4n+1}$ for any pair $(n,e)$.
Theorem~\ref{enepot} below shows this is in fact optimal when $n$ is a 
power of 2 (improving, in such a case, Theorem~\ref{main} by one dimension).
Theorem~\ref{niam} deals with cases where $n$ is not a power of 2.

\begin{teorema}\label{enepot}
For any $e$, the HHMP-type embedding
$L^{2n+1}(2^e)\subset \mathbb{R}^{4n+1}$ is optimal when $\alpha(n)=1$.
\end{teorema}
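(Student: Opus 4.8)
The plan is to reduce optimality to a single nonembedding statement and then push the $ku$/$BP$ Astey machinery one notch past Theorem~\ref{main}. Since the HHMP-type embedding already puts $L^{2n+1}(2^e)$ in $\mathbb{R}^{4n+1}$, optimality when $\alpha(n)=1$ (that is, $n=2^s$) amounts to showing that $L^{2n+1}(2^e)$ does not embed in $\mathbb{R}^{4n}$. Here hypothesis~(\ref{cota}) is vacuous and $\delta=0$, so Theorem~\ref{main} already excludes $\mathbb{R}^{4n-1}$; what remains is to kill the single extremal codimension $2n-1$. This is precisely the delicate case, because $2n-1$ is \emph{odd}: the normal bundle of a putative embedding is not on the nose a complexified bundle, and the even-codimension form of Astey's obstruction behind Theorem~\ref{main} does not apply verbatim.

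First I would remove the parity obstruction by crossing with a circle. One checks that $TL^{2n+1}(2^e)\oplus\mathbb{R}\cong(n+1)\,\eta_{\mathbb{R}}$ is complex ($\eta$ the canonical complex line bundle), that the normal bundle of any embedding in $\mathbb{R}^{4n}$ is orientable because $w_{1}(\nu)=\overline{w}_{1}(L^{2n+1}(2^e))=0$ (indeed $\overline{w}(L^{2n+1}(2^e))=1+x+\cdots+x^{n-1}$, which already uses $n=2^s$ and $x^{n+1}=0$), and hence that $L^{2n+1}(2^e)\times S^1$ is a \emph{closed almost complex} $(2n+2)$-manifold with tangent bundle $(n+1)\,p^{*}\eta$, $p$ the projection. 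An embedding $L^{2n+1}(2^e)\subset\mathbb{R}^{4n}$ then yields $L^{2n+1}(2^e)\times S^1\subset\mathbb{R}^{4n+2}$, an embedding of an almost complex $(2n+2)$-manifold in \emph{even} codimension $2n$, whose stable normal bundle $\widetilde\nu$ is complex of virtual rank $n$ with $c^{ku}(\widetilde\nu)=c^{ku}(p^{*}\eta)^{-(n+1)}$ (and similarly over $BP$). Equivalently one runs the odd-dimensional form of Astey's obstruction directly on the stably complex manifold $L^{2n+1}(2^e)$; both roads meet at the same computation.

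Next I would feed this into the $ku$- and $BP$-theoretic Astey obstructions, which turn the embedding $L^{2n+1}(2^e)\times S^1\subset\mathbb{R}^{4n+2}$ into a divisibility requirement in $ku_{*}=\mathbb{Z}[v]$ (resp.\ $BP_{*}$) on the image of the fundamental class under Pontryagin--Thom composed with the $ku$-orientation of $\widetilde\nu$; concretely, that $e_{ku}(\widetilde\nu)\cap[L^{2n+1}(2^e)\times S^1]$ be $v$-divisible to exactly the extent forced by the codimension being only $2n$. Using the Künneth splitting over $S^1$ and the presentation $ku^{*}(L^{2n+1}(2^e))=\mathbb{Z}[v][z]\big/\big(z^{n+1},\,[2^e](z)\big)$, with $z=e_{ku}(\eta)$ and $[2^e](z)=\big((1+vz)^{2^e}-1\big)/v$, this becomes a concrete divisibility statement about the coefficient of $z^{n}$ in $(1+z)^{-(n+1)}$, hence about the integer $\binom{2n}{n}$ together with the corrections coming from the $2^e$-series. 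The punch line: for $\alpha(n)=1$ Kummer's formula gives $\nu_2\binom{2n}{n}=\alpha(n)=1$, the smallest possible value, and propagating this through the relation $[2^e](z)=0$ and the truncation $z^{n+1}=0$ shows that the Astey obstruction element misses the required $v$-divisibility by exactly one factor of $v$ --- the desired contradiction. For $\alpha(n)\ge2$ the same bookkeeping loses $2\alpha(n)$ rather than $2\alpha(n)-1$ units and merely reproves Theorem~\ref{main}, which is why $n$ must be a power of $2$.

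I expect the crux, and the main obstacle, to be exactly this $v$-adic (equivalently $2$-adic) valuation count: keeping precise track of how the $2^e$-series relation interacts with the truncation $z^{n+1}=0$ in $ku^{*}(L^{2n+1}(2^e))$ and confirming that the obstruction falls short by exactly one factor of $v$. A more routine residual point is the handful of low-dimensional pairs lying outside the metastable range $2n\ge6$: for $n=1$ the statement is Hantzsche's classical theorem that the torsion subgroup of $H_{1}$ of a closed $3$-manifold embedded in $S^{4}$ is a square (and $\mathbb{Z}/2^{e}$ is not), while $n=2$ has to be handled by a separate, direct argument.
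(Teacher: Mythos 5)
Your plan is genuinely different from the paper's. The paper proves Theorem~\ref{enepot} by the classical Massey--Mahowald argument: one assumes an embedding $L^{2n+1}(2^e)\subset\mathbb{R}^{4n}$, passes to the Gysin sequence of the normal sphere bundle in mod~$2$ singular cohomology, invokes Massey's subalgebra $A^*$ (stable under cohomology operations, with the unique decomposition $\omega=\pi^*(u)+a\cdot\pi^*(v)$), and derives a contradiction with $y^{n+1}=0$ after computing $a^2=\mathrm{Sq}^{2n-2}(a)$ and using $\overline{W}_{2n-2}=y^{n-1}$, which is where $\alpha(n)=1$ enters. No $ku$, $BP$, or Astey-type bordism machinery appears anywhere in Section~\ref{classical}.

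The $ku/BP$ route you propose has a concrete gap at exactly the point you flag as ``the crux.'' Crossing with $S^1$ and applying Astey's even-codimension obstruction to $L^{2n+1}(2^e)\times S^1\subset\mathbb{R}^{4n+2}$ forces you to take $k=d-n=2^N-2n-1$, which is one unit \emph{smaller} than the $k=2^N-2n+\alpha(n)-1$ used in~\cite{nonimm} for Theorem~\ref{main} when $\alpha(n)=1$. Because the projection $p\colon L\times S^1\to L$ has a section, $(p\times\mathrm{id})^*$ is injective, so the $ku$-Euler class you need is nontrivial on $(L\times S^1)\times\P^{2k}$ if and only if it is already nontrivial on $L\times\P^{2k}$. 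But that latter nontriviality, together with condition~(b) (which for $L$ is exactly \cite[Prop.~3.1]{nonimm}), would mean Astey's obstruction rules out $L^{2n+1}(2^e)\subset\mathbb{R}^{4n+1}$ directly --- contradicting the HHMP embedding whose optimality you are trying to prove. So at $k=2^N-2n-1$ either the Euler class vanishes (in which case your approach cannot see the codimension $2n-1$ nonembedding at all), or condition~(b) fails for $L$ at this $k$, in which case you would have to establish the analogous Künneth condition for $L\times S^1$ --- a nontrivial claim about $ku^{\mathrm{odd}}(L^{2n+1}(2^e))\otimes_{ku_*}ku^*(\P^{2k})$ that \cite{nonimm} does not address. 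Either way, the $v$-divisibility bookkeeping that you assert ``falls short by exactly one factor of $v$'' is not established, and the HHMP embedding is strong evidence that the calculation would in fact give vanishing. The paper's mod~$2$ Gysin-sequence argument is not only simpler but also what actually carries the day; the $\alpha(n)=1$ hypothesis enters there through the dual Stiefel--Whitney class, not through a $2$-adic valuation of $\binom{2n}{n}$.
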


\begin{teorema}\label{niam}
There are embeddings of the form
$L^{2n+1}(2^e)\subset\mathbb{R}^d$ for the triples 
$(e,n,d)$ indicated by the columns of Table~\ref{tabla1}, 
as well as for the special type of triples $(e,n,d)=(\leq2,7,26)$.
In Table~\ref{tabla1}, $\delta(1)=7$, $\delta(2)=9$, and 
$\delta(e)=10$ for $e\geq 3$.
\begin{table}[h]
\centerline{\footnotesize
\begin{tabular}{|c|l|l|l|l|}\hline
\rule{0mm}{4.5mm} $e$ & $\geq1$ & $\geq1$ & $\geq1$ & $\geq1$ \\ \hline
\rule{0mm}{4.5mm} $n$ & $2\ell+1$; $\;\;\ell\geq 1$ & $2\ell+1$; $\;\;$even 
$\ell$, $\alpha(\ell)\geq 2$ & $4\ell+3$, $\;\;\ell\geq 2$ & $4\ell+3$; 
$\;\;$even $\ell$, $\alpha(\ell)\geq 2$ \\ \hline
\rule{0mm}{4.5mm} $d$ & $8\ell+3$ & $8\ell+2$ & $16\ell+\delta(e)$ &
$16\ell+\delta(e)-1$\\\hline
\rule{0mm}{4.5mm} $\mathrm{\it eff}$ & $3$ & $4$ & $5$ & 
$6$ \\ \hline
\end{tabular}}
\caption{$L^{2n+1}(2^e)\subset\mathbb{R}^d$ (last row indicates 
embedding efficiency for $e=2$)}
\label{tabla1}
\end{table}
\end{teorema}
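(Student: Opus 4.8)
Here is a plan for proving the embeddings collected in Theorem~\ref{niam}.

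The plan is to derive every embedding in Table~\ref{tabla1} from a single inductive construction, the lens-space analogue of the classical constructions of low-codimension embeddings of real projective spaces. Two descriptions of $L^{2n+1}(2^e)$ are used in tandem: it is the circle (sphere) bundle $S(\lambda^{2^e})\to\mathbb{C}\mathrm{P}^{n}$ of the $2^e$-th power of the canonical complex line bundle $\lambda$; and, via the diagonal join decomposition $S^{2n+1}=S^{2n-2k-1}\ast S^{2k+1}$, it is built from the smaller lens space $L^{2n-2k-1}(2^e)$ by gluing the disk bundle $D(\xi)$ of a complex rank-$(k{+}1)$ bundle $\xi$ over it onto a $D^{2(n-k)}$-bundle over $L^{2k+1}(2^e)$:
\[
L^{2n+1}(2^e)\;=\;D(\xi)\,\cup_{\phi}\,\bigl(D^{2(n-k)}\text{-bundle over }L^{2k+1}(2^e)\bigr),
\]
with $k=1$ for the first two columns and $k=3$ for the last two, so that the small lens spaces $L^{3}(2^e)$ and $L^{7}(2^e)$ are the only ``new'' pieces entering at each stage. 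The inductive step is then: given an embedding of $L^{2n-2k-1}(2^e)$ in the Euclidean space allotted to it, thicken it along a geometric representative of $\xi$ sitting inside its normal bundle, and extend the resulting embedding of $D(\xi)$ over the attached bundle by general position. One reads off from Table~\ref{tabla1} that, apart from a handful of small cases treated separately, all of these triples lie inside Haefliger's metastable range $2d\ge 3(2n+2)$ of Section~\ref{start}; there the only obstruction to closing up the construction is the primary one, living in $H^{*}\bigl(L^{2n+1}(2^e)\bigr)$ with coefficients in a homotopy group of a Stiefel manifold, and the heart of the matter is its vanishing.

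The two dimensions saved below the HHMP bound $\mathbb{R}^{4n+1}$ in columns $1$ and $3$, and the extra dimension saved when $\ell$ is even with $\alpha(\ell)\ge 2$ in columns $2$ and $4$, would be extracted by routing the normal-bundle part of the argument through a modified Postnikov tower for $BO(q)\to BO$ in the relevant codimension $q$, and then invoking the Haefliger--Hirsch embedding criterion in the metastable range. The primary obstruction to a rank-$q$ geometric representative of the stable normal bundle of $L^{2n+1}(2^e)$ is a (twisted) Euler class, and this is exactly where the dyadic arithmetic enters: pulled back from $\mathbb{C}\mathrm{P}^{n}$ it is a multiple of a power of the polynomial generator whose $2$-adic valuation is governed by $\alpha(\ell)$, so the parity of $\ell$ together with $\alpha(\ell)\ge 2$ forces enough divisibility to annihilate it, producing a nowhere-zero section and the codimension drop. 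The higher $k$-invariants of the tower are then removed by a dimension count inside the metastable range, after reducing where necessary modulo the known $ko$- and $ku$-cohomology of lens spaces; those same computations are what make the saved amount stabilize in $e$, yielding the pattern $\delta(1)=7$, $\delta(2)=9$, $\delta(e)=10$ for $e\ge 3$.

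The induction must be primed with the finitely many small cases it does not reach: the smallest $\ell$ in each column, and the exceptional triples $(e,n,d)=(\le2,7,26)$, i.e. $L^{15}(2)$ and $L^{15}(4)$ in $\mathbb{R}^{26}$. These I would settle by hand, starting from the HHMP embedding $L^{2n+1}(2^e)\subset\mathbb{R}^{4n+1}$ and improving it using explicit low-dimensional data---known embeddings of $\mathbb{C}\mathrm{P}^{3}$ and $\mathbb{C}\mathrm{P}^{7}$, span and parallelizability information for $L^{7}(2^e)$ and $L^{15}(2^e)$---and verifying directly the normal-bundle hypothesis needed to launch the induction.

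The main obstacle is the bootstrapping. The control on the stable normal bundle and on the gluing bundle $\xi$ that is \emph{required} to extend the embedding across the attached bundle must be \emph{recovered} for the output embedding, so that the step can be iterated; making the construction self-sustaining in this way is precisely what forces the side conditions ``$\ell$ even'' and ``$\alpha(\ell)\ge 2$'' in the sharper columns, and what pins the whole scheme to an explicit computation of the first few $k$-invariants of the modified Postnikov tower over $L^{2n+1}(2^e)$. Unlike its classical counterpart over $\mathbb{RP}^{2n+1}=L^{2n+1}(2)$, this computation genuinely involves the higher $2$-torsion in the cohomology (and $ku$-theory) of the lens space, and it is the technical core of the proof.
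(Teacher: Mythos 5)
Your high-level plan matches the paper's: the join decomposition $L_{k+j+1,e}=L\cup R$, the two rounds with $k=1$ (columns $1$--$2$) and $k=3$ (columns $3$--$4$), the reduction of the feeding ingredient to a geometric-dimension bound settled by a modified Postnikov tower, and the need to prime the induction with small cases. However, several of the specific claims in your sketch diverge from how the argument actually works, and at least one would derail the execution.

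First, the inductive step is more constrained than you state. The paper's Proposition~\ref{inductive} needs, besides a Euclidean embedding of $L_{j,e}$, an embedding of $L_{k,e}\subset\mathbb R^\alpha$ equipped with $\sigma$ everywhere linearly independent \emph{normal sections}, plus a Euclidean embedding of the feeding bundle $(k+1)\eta_{j,e}$ in dimension exactly $\sigma+\beta$; this is what lets the double mapping cylinder $M(\pi_k,\pi_j)=L_{k+j+1,e}$ be embedded in $\mathbb R^{\alpha+\beta+1}$. Merely ``thickening the big piece along $\xi$ inside its normal bundle and extending by general position'' does not see the second input, and the extension cannot simply be done by general position since $\sigma+\beta$ sits right at (or one above) the critical range; that is why the numerical conditions in Proposition~\ref{inductive} appear.

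Second, and more importantly, you have the source of the technical difficulty and of the $e$-dependence the wrong way around. The MPT analysis behind Lemma~\ref{feeding} is \emph{not} carried out over $L^{2n+1}(2^e)$. The paper observes that for $\ell\ge 2$ the desired lifting can be produced already over $\mathbb C\mathrm P$, and in the critical $\mu=2$ case it is pushed further to $\mathbb H\mathrm P^{2j+1}$; the higher $2$-torsion of the lens space plays no role in that step, and the lens space enters only by pulling back the lifting. Conversely, the higher $k$-invariants are emphatically \emph{not} killed by a dimension count or by a routine appeal to $ko$/$ku$; the heart of Proposition~\ref{fin} is a secondary-indeterminacy argument built around the Gitler--Mahowald--Milgram secondary operation $\Phi$ satisfying $\Phi(y^{4j})=y^{4j+2}$ for odd $j$, and this is what makes the method reach $BO(8j)$. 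Finally, the pattern $\delta(1)=7,\ \delta(2)=9,\ \delta(e)=10$ for $e\ge3$ does \emph{not} come from the MPT/feeding side: it enters through the igniting Lemma~\ref{igniting}, i.e.\ through the number $\sigma_{3,e}$ of independent normal sections available on $L_{3,e}\subset\mathbb R^{14}$, which is read off from known immersion data for lens spaces and then propagated through the second round. Without that, your scheme has no mechanism to produce the $e$-dependence in the last two columns.
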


The only potentially non-smooth embeddings (outside Haefliger's range) 
in this result are those with $\ell=1$ 
in the second column of Table~\ref{tabla1}, that is, the embeddings $L^7(2^e)
\subset\mathbb{R}^{11}$. On the other hand, the case $\ell=0$ is indeed 
exceptional; as indicated in Remark~\ref{n12}, all 
3-dimensional lens spaces (smoothly) embed in $\mathbb{R}^5$, but no 
3-dimensional lens space embeds in $\mathbb{R}^4$.

\smallskip
The strength of Theorem~\ref{niam} is better appreciated 
by contrasting it with Theorem~\ref{main}. For instance, in case 
of high-torsion lens spaces, the embedding given in column $c$ of 
Table~\ref{tabla1} is within $d(c)$ dimensions of being optimal, where
\begin{equation}\label{efficiency}
d(c)=(2\alpha(\ell)-1,\,2\alpha(\ell)-2,\,2\alpha(\ell),\,2\alpha(\ell)-1)
\end{equation}
for $c=(2,3,4,5)$. These figures lead to a couple of interesting comments
about the potential optimality of our results.
Firstly, in terms of the efficiency $\mathrm{\it eff}$ of an 
embedding $M^m\subset\mathbb{R}^{2m-\mathrm{\it eff}}$, we observe that, 
although the embeddings in columns $c=4,5$ have better efficiency than 
those for $c=2,3$,~(\ref{efficiency}) indicates that the latter ones are, 
in general, closer to being optimal. This is of course interpreted
as saying that the difficulty in computing the embedding dimension of 
$L^{2n+1}(2^e)$ increases with $\alpha(n)$, a standard empirical fact for
real projective spaces. Secondly, note that
the only situation where $d(c)$ could be zero---thus indicating an optimal 
embedding---is for $c=3$ and $\alpha(\ell)=1$; unfortunately the last 
equality is ruled out for $c=3$. In Remark~\ref{utilidad} we discuss the
expectations for what the optimal embedding could turn out to be for this 
situation (see also Remark~\ref{eje7} and Example~\ref{ejegen}, the 
difficulty of the former being the first major obstacle in the field, 
and the main motivation for this paper). Now, at the opposite extreme of 
2-torsion (i.e., for real projective spaces), we point out (Remark~\ref{power})
that the cases with the lowest allowed value of $\alpha(\ell)$ in the fourth
column of Table~\ref{tabla1} are known to give optimal embeddings
for $e=1$. Likewise, the cases with the lowest allowed value of $\alpha(\ell)$
in the fifth column of Table~\ref{tabla1} give currently best known embeddings
for $e=1$.


\smallskip
We now comment on the methods (their origins and expectations) 
used in proving the three theorems above.

\smallskip
Following Astey's work~\cite{Asteyembedding} for real projective spaces, 
the proof of Theorem~\ref{main} extends, to the embedding realm, the 
nonimmersion results for lens spaces in~\cite{nonimm,cfimm}. 
In turn, these arose from Davis' strong nonimmersion 
result for real projective spaces~\cite{Davisstrongimmersion}
(see also~\cite{Asteyimmersion}). The form of all these results combines 
Euclidean dimension, manifold dimension,
and (for lens spaces) torsion in the fundamental group, 
in order to better reflect irregularities 
in the immersion and (now) embedding dimensions. 
For real projective spaces,\footnote{Technical problems prevented 
the first author from obtaining in~\cite{cfimm} the required general 
result that would have made these considerations work for lens spaces.} 
the strength of nonimmersion and nonembedding results of this type is 
due to the fact that, while involving a simple but general statement, 
they are often either the currently best known, or within a short 
distance of the best known. For instance,
as originally explained in~\cite{Davisstrongimmersion} and updated 
in~\cite{BDM},
whenever it is currently known that $\P^n$ does not immerse in $\mathbb{R}^m$,
then \cite{Davisstrongimmersion} affirms that $\P^{n+i}$ does not immerse in 
$\mathbb{R}^{m-j}$ for some nonnegative integers $i$ and $j$ with $i+j\leq 3$
(the last inequality can be improved to $i+j\leq 2$, 
if one excludes the nonimmersion results 
in~\cite{donzelov}). Such a comparison takes into account the recent 
works~\cite{BDM,DMtmf,KWa,KWb} obtained with some of the most sophisticated 
homotopy technology currently available. 
One should keep in mind, though, that for a 
{\it fixed} projective space, current nonimmersion results can 
improve the original~\cite{Davisstrongimmersion} by arbitrarily large 
Euclidean dimensions---but this is not new: the much older~\cite{James} 
has the same effect over~\cite{Davisstrongimmersion}. At any rate,
it is interesting to keep in mind the possibility that 
the recent advances in homotopy methods could turn out 
to be particularly helpful in settling some of the small gaps pinpointed 
in~(\ref{efficiency}).

\smallskip
Theorem~\ref{enepot} is not really new; as we will see in 
Section~\ref{classical}, it follows from the exact same argument
used in~\cite{Mah62} for the case $e=1$ (see also~\cite{Levine}). 

\smallskip
The proof of Theorem~\ref{niam} 
adapts, for lens spaces, the inductive Euclidean embedding
constructions done in~\cite{MM} for real projective spaces $\P^n$. 
We need to proceed with care though, so as to avoid the flaw in~\cite{MM}
coming from using the (not properly argued) immersions in~\cite{GMflaw}
(see Remark~\ref{otros}). One of the main ingredients in~\cite{MM} comes from 
Milgram's linear algebra techniques in~\cite{milgramlinear} for 
constructing, over a given space $\P^{n+k}$, spanning fields with the 
property that enough of them are trivial over a given smaller 
$\P^{n}$. But for 
lens spaces we need to replace such a linear algebra component by a careful 
obstruction theory analysis in the form of modified Postnikov towers. As
a side consequence we get the fact that Milgram's linear algebra 
input in the (start of the) 
inductive method in~\cite{MM} not only has the power to produce, when combined 
with the embeddings for $\P^7$ and $\P^{15}$ in~\cite{Rees}, 
many optimal embeddings of real projective spaces
(see Remark~\ref{power}), but can actually 
be extended, via obstruction theory, to the 
fatter\footnote{Section~\ref{comp} discusses the 
sense in which $2^e$-torsion lens spaces get fatter 
as $e$  increases.} lens spaces in order to trigger a reasonably 
strong (and in a sense optimal---see the final remarks in 
section~\ref{normal}) inductive construction of their embeddings. 
In this respect, it is to be noted that not all known methods for constructing 
embeddings of real projective spaces have a chance to work for higher 2-torsion
lens spaces. Indeed, some of the projective space arguments known to date
use, in an essential way, phenomena inherent to real projective spaces, thus 
constructing low-dimensional embeddings that, in general, will
not have to be true for high-torsion lens spaces. So, part of the problem 
consists of identifying methods that do construct (reasonably strong) 
embeddings for general $2^e$-torsion lens spaces.
This is indeed one of the goals set for this paper, with Proposition~\ref{fin}
its most notable accomplishment.

\smallskip
Observe that having concentrated in this paper on lens spaces 
of torsion a power 
of 2 is not a real restriction. For one, as we already mentioned, except for 
a few low-dimensional cases, all of our results are within Haefliger's 
metastable range. Therefore, using the canonical projection 
$L^{2m+1}(2^e)\to L^{2m+1}(2^ek)$ in~\cite[Theorem~B]{rigdon}, we see that, 
for odd $k$, our embedding results for $L^{2m+1}(2^e)$ automatically apply to 
$L^{2m+1}(2^ek)$.

\smallskip
The paper is organized as follows.
Theorem~\ref{main} is proved in Section~\ref{mainproof}
by a straightforward application 
of Astey's general $MU$-obstruction result~\cite{Asteyembedding} 
for Euclidean embeddings of almost complex manifolds, together with the
$ku$-calculations in~\cite{nonimm} and the $BP$-calculations in~\cite{cfimm}. 
Section~\ref{classical} 
is devoted to the proof of Theorem~\ref{enepot}, and to the description of 
some easy embedding results, mainly setting grounds for comparing with the 
embeddings in Section~\ref{newcomp}. The ideas in~\cite{MM} are worked out 
in Section~\ref{newcomp} from the point of view of lens 
spaces in order to prove Theorem~\ref{niam}. 
Section~\ref{normal} gives the obstruction theory
details that replace Milgram's linear algebra input.
The goal of the final Section~\ref{comp} is to pinpoint key subtleties 
arising when we compare the behavior of 
the (immersion and) embedding dimensions of $2^e$-torsion lens spaces, 
as $e$ varies.

\smallskip
This research was completed while the first author was on a 
sabbatical break visiting CIMAT, in Guanajuato, M\'exico. 
It is a pleasure to thank faculty and staff at CIMAT for creating 
the conditions 
that led to an enjoyable and productive visit. A CONACyT sabbatical 
scholarship provided the required financial support.

\section{Proof of Theorem~\ref{main}}\label{mainproof}
Let $M$ be a smooth compact stably almost complex 
manifold, with stable normal bundle $\nu$
of complex dimension $d$. Let $h^*$ denote a multiplicative complex-oriented 
cohomology theory whose ring of coefficients is concentrated in even dimensions,
and where 2 is not a zero divisor. Let 
$\xi_k$ stand for the canonical real line bundle over the $k$-dimensional 
real projective space $\P^k$.
Then, as proved in~\cite{Asteyembedding}, $M$ does not admit a Euclidean 
embedding with codimension $2\ell$
provided the following two conditions hold with $k=d-\ell$:

\smallskip
\begin{itemize}
\item[(a)] The $h^*$-Euler class of $\nu\otimes\xi_{2k}$ is nontrivial.

\smallskip
\item[(b)] The external product $h^{\mathrm{even}}(M)\otimes_{h_*} h^*(\P^{2k})
\rightarrow h^{\mathrm{even}}(M\times \P^{2k})$
is an isomorphism.
\end{itemize}

In the case $M=L^{2m+1}(2^e)$ we take $\nu=(2^N-m-1)\eta$, where $N$ is any
large positive integer,
and where $\eta$ is obtained as the pull-back, under the canonical 
projection $L^{2m+1}(2^e)\rightarrow \mathbb{C} \P^m$, of the complex Hopf line 
bundle over 
the $m$-dimensional complex projective space. 

\begin{proof}[Proof of Theorem~\ref{main}]
Under the above conditions, and in order to rule out a 
possible ($4n-2\alpha(n)+1$)-dimensional
Euclidean embedding of $L^{2m+1}(2^e)$, for $m=n+\delta$, 
the bundle to consider in~(a) above is 
\begin{equation}\label{bundle}
(2^N-m-1)\eta\otimes\xi_{2k}, 
\end{equation}
where $k=2^N-2n+\alpha(n)-1$. If we let 
$h^*=ku^*$, connective complex $K$-theory, for $\delta=0$, and $h^*=BP^*$,
Brown-Peterson theory at the prime 2, for $\delta>0$, then
the main results in~\cite{nonimm} (for $\delta=0$)
and in~\cite{cfimm} (for $\delta>0$) assert that the 
$h^*$-Euler class of~(\ref{bundle}) is nontrivial. The result 
then follows since, in our case ($M=L^{2m+1}(2^e)$), condition
(b) above is a restatement of Proposition~3.1 in~\cite{nonimm}.
\end{proof}

\begin{nota}{\em
In~\cite[Proposition 2.1]{cfimm} it is proved that 
if $L^{2m+1}(2^e)$  were to immerse in $\mathbb{R}^{4n-2\alpha(n)}$, 
then the bundle~(\ref{bundle})
would have a nowhere zero section. Thus, in retrospect, the 
$h^*$-calculations in~\cite{nonimm} and~\cite{cfimm}
simultaneously yield the non-immersibility 
and non-embeddability of $L^{2m+1}(2^e)$ 
($m=n+\delta$, as in the proof of
Theorem~\ref{main}) in Euclidean dimensions
$4n-2\alpha(n)$ and $4n-2\alpha(n)+1$, respectively.
(This is one of the reasons why the remarks about Theorem~\ref{main}
in Section~\ref{start} mainly concern immersions rather than embeddings.)
}\end{nota}

\begin{nota}\label{n23}{\em
Astey's nonembedding result is indirectly based on the 
triviality of the (generalized) Euler 
class to the normal bundle for an 
embedding in a Euclidean space (see~\cite[Corollary~11.4]{MS}). Here we 
point out firstly that, when the above observation
is used for singular cohomology with 
$\mathbb{Z}$ coefficients (rather than $ku$ or $BP$), 
we obtain the well known Fact~\ref{wellknown} below, and secondly that,
together with Fact~\ref{parallel} in Section~\ref{comp},
this yields the impossibility of finding a Euclidean
embedding with codimension $2$ for any $2^e$-torsion lens space 
of dimension $\ge 5$. The only notable {\it possible} exception is the
parallelizable $7$-dimensional real projective space $\P^7$ (see
Remark~\ref{eje7} 
at the end of the paper). In particular, we recover cases $n=2,3$ 
of Theorem~\ref{main} (for high-torsion lens spaces in the case of $n=3$). 
}\end{nota}

\begin{hecho}\label{wellknown}{\em
{\it The only compact smooth orientable 
manifolds $M$ that can be embedded in Euclidean space with 
codimension $2$ are necessarily stably parallelizable}
(as observed in~\cite{HS}, the converse is in general false).
Indeed, the normal 2-plane bundle $\nu_M$ of such
an embedding would be orientable with trivial Euler class.
But SO(2)$=S^1$, so that being the realification of a 
complex line bundle with vanishing first Chern class, $\nu_M$ is 
in fact trivial, and consequently $M$ is stably parallelizable.
}\end{hecho}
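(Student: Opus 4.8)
Write $n=\dim M$ and suppose given a smooth embedding $M\subset\mathbb{R}^{n+2}$ of a compact orientable manifold, with normal bundle $\nu_M$, a real $2$-plane bundle over $M$. The plan is to show that $\nu_M$ is trivial; stable parallelizability of $M$ is then immediate, since $TM\oplus\nu_M$ is the (trivial) restriction of $T\mathbb{R}^{n+2}$ to $M$.

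First I would observe that $\nu_M$ is orientable: from $TM\oplus\nu_M\cong T\mathbb{R}^{n+2}|_M$ together with the orientability of $T\mathbb{R}^{n+2}$ and of $TM$ (the latter because $M$ is orientable), one gets $w_1(\nu_M)=w_1(TM)=0$. An oriented real $2$-plane bundle has structure group $\mathrm{SO}(2)$, so under the standard identification $\mathrm{SO}(2)\cong\mathrm{U}(1)$ we may regard $\nu_M$ as the realification of a complex line bundle $L$ on $M$; for a suitable choice of orientation this gives $e(\nu_M)=c_1(L)\in H^2(M;\mathbb{Z})$.

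Next I would invoke the vanishing of the Euler class of the normal bundle of any Euclidean embedding (\cite[Corollary~11.4]{MS}): the Thom class of $\nu_M$ lifts, through excision, to a class in $H^2(\mathbb{R}^{n+2},\mathbb{R}^{n+2}\setminus M)$ whose image in $H^2(\mathbb{R}^{n+2})=0$ vanishes, and chasing the commutative square relating these restriction maps to the Thom isomorphism forces $e(\nu_M)=0$, hence $c_1(L)=0$. Since complex line bundles over a CW complex are classified by their first Chern class---the classifying space $B\mathrm{U}(1)=\mathbb{C}\P^\infty$ being a $K(\mathbb{Z},2)$---it follows that $L$, and therefore $\nu_M$, is trivial, and the proof is complete.

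There is no genuine obstacle here; the two points that want a little care are the vanishing of $e(\nu_M)$ (which one either quotes from \cite{MS} or re-derives by the excision/diagram chase just sketched) and the passage from $c_1(L)=0$ to triviality of $L$, which is exactly where one uses that $B\mathrm{U}(1)$ is an Eilenberg--MacLane space. It is also worth recording that orientability of $M$ enters the argument twice: once to equip $\nu_M$ with the structure group $\mathrm{SO}(2)$, and once to ensure that it is the ordinary (untwisted) group $H^2(M;\mathbb{Z})$ which classifies the relevant line bundle. The failure of the converse noted in the statement (a stably parallelizable $M$ need not embed with codimension $2$, cf.\ \cite{HS}) is of course not something this argument speaks to.
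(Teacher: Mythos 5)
Your argument is correct and follows the paper's own proof essentially verbatim: orientability of $\nu_M$, vanishing of the Euler class via \cite[Corollary~11.4]{MS}, the identification $\mathrm{SO}(2)=\mathrm{U}(1)$ turning $\nu_M$ into a complex line bundle with $c_1=e(\nu_M)=0$, and the classification of line bundles by $c_1$ to conclude triviality. One small inaccuracy worth flagging: orientability of $M$ is used only once (to make $\nu_M$ orientable); once $\nu_M$ carries a $\mathrm{U}(1)$-structure, complex line bundles over any CW complex are classified by $H^2(\cdot;\mathbb{Z})$ with untwisted coefficients, so your second claimed use of orientability is not actually needed.
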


\begin{nota}\label{n12}{\em
As for low (i.e.,~1 or 2) codimension Euclidean embeddings of 
lens spaces, Remark~\ref{n23}
does not consider the case of lens spaces of dimension 1 or 3. 
Luckily, these are well understood. Of course 
$S^1$ embeds optimally in $\mathbb{R}^2$. On the other hand,
according to~\cite{Hantzsche} no $3$-dimensional
lens space embeds into $\mathbb{R}^4$; 
however they all (smoothly) embed in $\mathbb{R}^5$, 
as follows from~\cite{Hirsh3}.
}\end{nota}

\section{Proof of Theorem~\ref{enepot} (early methods)}\label{classical}
We have just indicated in Remark~\ref{n12} that
the case $n=1$ in Theorem~\ref{enepot} is well known.

\begin{proof}[Proof of Theorem~\ref{enepot}\, for $n>1$] 
We derive a contradiction
from assuming an embedding $L^{2n+1}(2^e)\subset\mathbb{R}^{4n}$. 
The argument uses singular cohomology groups with mod 2 coefficients 
(which will be suppressed from the notation). 
The Gysin sequence of the 
normal sphere bundle $E$ (with projection $\pi$) for
the hypothesized embedding reduces to split short exact sequences
$$
0\to H^q(L^{2n+1}(2^e))\stackrel{\;\pi^*}{\longrightarrow} 
H^q(E)\stackrel{\psi}{\longrightarrow}  H^{q-2n+2}(L^{2n+1}(2^e))\to 0
$$
(the homomorphism after $\psi$ is multiplication by the Euler class,
which is trivial as mentioned in Remark~\ref{n23}).
For dimensions $q<4n-1$, the splitting is made geometrically explicit 
in~\cite[Section~3]{masseypacific} by exhibiting a subalgebra $A^*$ such that 
\begin{itemize}
\item[(a)] $H^q(E)=\pi^*(H^q(L^{2n+1}(2^e)))\oplus A^q$, for $0<q<4n-1$, 
\item[(b)] $A^{4n-1}=0$, and
\item[(c)] $A$ is stable under cohomology operations.
\end{itemize}
Consequently, in the dimensions of (a), $\psi$ restricts to an isomorphism
from $A^q$ onto $H^{q-2n+2}(L^{2n+1}(2^e))$.
In particular, there is a well defined element $a\in A^{2n-2}$ with
$\psi(a)=1$. Furthermore, as observed in~\cite[page 784]{masseypacific},
each element $\omega\in H^*(E)$ can be expressed uniquely in the form
\begin{equation}\label{unique}
\omega=\pi^*(u)+a\cdot \pi^*(v)
\end{equation}
where $u,v\in H^*(E)$. As a last piece of notation, 
let $x,y\in H^*(L^{2n+1}(2^e))$ stand for 
the nontrivial classes
in dimensions~1 and 2, respectively. They are 
connected by the relation $\beta_e(x)=y$, where 
$\beta_e$ is the Bockstein associated to the extension 
$0 \to \mathbb{Z}/2^e \to \mathbb{Z}/2^{2e} \to \mathbb{Z}/2^e \to 0$.
We recall $H^*(L^{2n+1}(2^e))$ is generated 
by $x$ and $y$  subject to the relations 
\begin{equation}\label{relations}
y^{n+1}=0 \mbox{ \ and \  } x^2=\varepsilon y 
\end{equation}
where $\varepsilon=1$ for $e=1$,  and $\varepsilon=0$ for $e>1$.

Start with the class $\omega\in A^{2n-1}$ corresponding to $x$ 
under $\psi$, which 
must clearly have the form $\omega =\delta \pi^*(xy^{n-1})+a\cdot\pi^*(x)$, for 
$\delta\in\mathbb{Z}/2$. Property (c) above implies that
$$
\delta \pi^*(y^n)+a\cdot\pi^*(y)=\beta_e\left(\delta \pi^*(xy^{n-1})+
a\cdot\pi^*(x)
\right)
$$
(as in~\cite{Mah62}, one uses here the fact that $a$ is 
the mod 2 reduction of a similar 
integral class) lies in $A$, whereas property (b) above yields 
\begin{equation}\label{product}
a^2\cdot \pi^*(xy)=\left(\rule{0mm}{3.6mm}
\delta \pi^*(y^n)+a\cdot\pi^*(y)\right)\left(\rule{0mm}{3.6mm}
\delta \pi^*(xy^{n-1})+
a\cdot\pi^*(x)\right)=0
\end{equation}
(the first equality uses the relation $y^{n+1}=0$, recalling $n>1$).
On the other hand,~\cite[lemma on page~785]{masseypacific} claims that
the expression~(\ref{unique}) for 
$a^2=\Sq^{2n-2}(a)$ has the form $a^2=\pi^*(a')+a\cdot 
\pi^*(\overline{\mbox{W}}_{2n-2})$, where $\overline{\mbox{W}}_{2n-2}$ 
is the normal Stiefel-Whitney class of $L^{2n+1}(2^e)$. But 
a straightforward computation using the 
hypothesis $\alpha(n)=1$ gives $\overline{\mbox{W}}_{2n-2}=y^{n-1}$,
so that~(\ref{product}) reduces, by dimensional reasons (once again 
recall $n>1$), to
$a\cdot \pi^*(xy^n)=0$ or, by the uniqueness
of~(\ref{unique}), $xy^n=0$, in contradiction to~(\ref{relations}).
\end{proof}

We close this section with a rather geometric 
argument leading to an easy embedding result for lens spaces, which
partially generalizes Mahowald's embedding $\P^k\subset\mathbb{R}^{2k-2}$
(valid for $k>3$, with $k\equiv 3\mod{4}$, see~\cite[Theorem~7.2.2]{mah64}).
The (improved) full generalization is 
given by Theorem~\ref{niam}. 

\begin{proposicion}\label{spin}
$L^{2n+1}(2^e)$ embeds in $\mathbb{R}^{4n}$ provided $n\equiv 3\mod{4}$.
\end{proposicion}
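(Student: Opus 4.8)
The plan is to produce the embedding $L^{2n+1}(2^e)\subset\mathbb{R}^{4n}$ by combining an \emph{immersion} in the same codimension with a normal-bundle triviality argument, in the spirit of Mahowald's original construction of $\P^{k}\subset\mathbb{R}^{2k-2}$ for $k\equiv 3\bmod 4$. First I would invoke the HHMP-type embedding $L^{2n+1}(2^e)\subset\mathbb{R}^{4n+1}$, valid for every pair $(n,e)$, as recalled just before Theorem~\ref{enepot}. The normal bundle of this embedding is a $2$-plane bundle $\nu$ over $L^{2n+1}(2^e)$; the manifold embeds one dimension lower precisely when $\nu$ admits a nowhere-zero section, equivalently when its Euler class (in suitable twisted coefficients, or just mod $2$ when we only need the top obstruction) vanishes and, in the metastable range, there are no further obstructions to compressing the embedding. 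So the task reduces to showing that for $n\equiv 3\bmod 4$ the normal $2$-plane bundle of the HHMP embedding has a section.

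The key computational input is the normal Stiefel--Whitney class. Since $n\equiv 3\bmod 4$ we have $n$ odd, and a direct computation in $H^*(L^{2n+1}(2^e);\mathbb{Z}/2)=\mathbb{Z}/2[x,y]/(y^{n+1},x^2-\varepsilon y)$ using the standard formula for the total (tangent, hence normal) Stiefel--Whitney class of a lens space gives that $\overline{W}(L^{2n+1}(2^e))$ is concentrated in low degrees; in particular the top relevant normal class $\overline{W}_{2n}$ and $\overline{W}_{2n-1}$ vanish for such $n$. The mod-$2$ Euler class of the normal $2$-plane bundle of the codimension-$2$ embedding $L^{2n+1}(2^e)\subset\mathbb{R}^{4n+1}$ is $\overline{W}_2$ of that embedding --- wait, more carefully: I would argue that the obstruction to deforming the codimension-$1$ HHMP embedding (or to finding a section of the normal line-plus-trivial bundle) is detected by $\overline{W}_{2n}$, which vanishes, and that by the Haefliger metastable-range principle already cited in Section~\ref{start} (since $2(2n+1)\ge 3(2n+2)$ fails, so actually one must check we are in range --- here $m=4n$, $\dim=2n+1$, and $2\cdot 4n\ge 3(2n+2)$ holds for $n\ge 2$), a topological such embedding yields a smooth one.

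The main obstacle I anticipate is handling the \emph{intermediate} obstructions rather than just the top one: reducing the codimension from $1$ to $2$ in a compression-theoretic setup is governed by a tower of obstructions in $H^{i}(L^{2n+1}(2^e);\pi_{i-1}(\text{fiber}))$, and one must check these all vanish, not merely the Euler class. I expect to route around this exactly as in Mahowald's argument: use that for $n\equiv 3\bmod 4$ the relevant Stiefel--Whitney (and, where needed, integral Bockstein) obstructions are algebraically forced to be zero by the ring structure~(\ref{relations}) and the sparseness of $\overline{W}$, and possibly invoke a $\Spin$ or parallelizability-type condition on $L^{2n+1}(2^e)$ --- note $n\equiv 3\bmod 4$ makes $2n+1\equiv 7\bmod 8$, so these lens spaces are the $2^e$-torsion analogues of $\P^7$-like parallelizable-in-low-degrees manifolds, which is presumably why this residue class is singled out. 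The cleanest route is probably to reuse Massey's splitting machinery from the proof of Theorem~\ref{enepot} above: the subalgebra $A^*$ and the uniqueness of~(\ref{unique}) let one convert each obstruction into a statement about products in $H^*(L^{2n+1}(2^e))$, and the computation $\overline{W}_{2n-2}=y^{n-1}$ together with $n\equiv 3\bmod 4$ should make every obstruction beyond the first vanish automatically, leaving only the (vanishing) Euler class.
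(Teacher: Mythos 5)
The paper's proof is far shorter and takes a different, cleaner route: by Lemma~\ref{spingen} (via the Wu formula), $L^{2n+1}(2^e)$ is a spin manifold for odd $n$, and $n\equiv 3\bmod 4$ gives $2n+1\equiv 7\bmod 8$; one then simply cites Thomas's theorem that every spin manifold $M^m$ with $m\equiv 5,6,7\bmod 8$, $m\geq 7$, embeds in $\mathbb{R}^{2m-2}$. Your instinct that spin plays the central role is right, but the proof you sketch attempts to \emph{redo}, in a special case, a compression argument that Thomas has already carried out in general; the cited theorem makes the obstruction-theoretic tower you worry about disappear.

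As written, your sketch has concrete gaps that would prevent it from closing. The normal bundle of the HHMP embedding $L^{2n+1}(2^e)\subset\mathbb{R}^{4n+1}$ has rank $2n$, not $2$, so there is no Euler class of a $2$-plane bundle to analyze, and the phrase ``codimension-$1$ HHMP embedding'' is off by a factor of $n$. More seriously, the computation $\overline{\mathrm{W}}_{2n-2}=y^{n-1}$ that you propose to reuse from the proof of Theorem~\ref{enepot} relies explicitly on the hypothesis $\alpha(n)=1$; for $n\equiv 3\bmod 4$ one has $\alpha(n)\geq 2$, so that identity does not hold and the Massey-splitting bookkeeping does not transfer. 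Finally, invoking vanishing of normal Stiefel--Whitney classes only kills the mod-$2$ primary obstructions; one still needs to dispose of integral and higher obstructions, which is precisely what Thomas's theorem packages for you. The efficient fix is to establish the spin structure (the parity of $n$ is what matters, via $\Sq^2$ on $H^{2n-1}$), note $2n+1\equiv 7\bmod 8$ and $2n+1\geq 7$, and quote Thomas.
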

\begin{proof} 
This is a consequence of Lemma~\ref{spingen} below and 
the fact proved in~\cite{thomas} that,
for $m\equiv 5,6,7\mod{8}$ and 
$m\geq 7$, every spin manifold $M^m$ embeds in 
$\mathbb{R}^{2m-2}$.
\end{proof}

\begin{lema}\label{spingen}
$L^{2n+1}(2^e)$ has a spin structure precisely for $n=0$, and 
for odd $n\ge 1$.
\end{lema}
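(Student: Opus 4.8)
The plan is to compute the second Stiefel--Whitney class $w_2$ of the tangent bundle $\tau$ of $L^{2n+1}(2^e)$ and determine exactly when it vanishes; recall that a manifold is spin iff it is orientable (which lens spaces always are, being quotients of spheres by orientation-preserving actions) and $w_2(\tau)=0$. First I would recall the standard description of the tangent bundle of the lens space: writing $\eta$ for the complex line bundle over $L^{2n+1}(2^e)$ that is the pullback of the Hopf bundle over $\com\P^n$ (as set up before the proof of Theorem~\ref{main}), one has a stable isomorphism $\tau \oplus \real \cong (n+1)\,\eta_{\real}$, where $\eta_{\real}$ denotes the underlying real 2-plane bundle of $\eta$. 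This is the lens-space analogue of the familiar splitting for $\com\P^n$, obtained by pulling back along $L^{2n+1}(2^e)\to\com\P^n$ (or directly from the description of $L^{2n+1}(2^e)$ as the sphere bundle / unit vectors in $(n+1)\eta$ over a point, combined with the Euler sequence).

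Next I would extract $w_2$. Since $\tau$ is stably $(n+1)\eta_{\real}$, its total Stiefel--Whitney class is $w(\tau)=w(\eta_{\real})^{n+1}=(1+w_1(\eta_{\real})+w_2(\eta_{\real}))^{n+1}$. Now $\eta_{\real}$ is the realification of a complex line bundle, hence orientable, so $w_1(\eta_{\real})=0$ and $w_2(\eta_{\real})$ is the mod~$2$ reduction of $c_1(\eta)$, which is exactly the generator $y\in H^2(L^{2n+1}(2^e);\bz/2)$ in the notation of Section~\ref{classical}. Therefore $w(\tau)=(1+y)^{n+1}$ and in particular
\[
w_2(\tau)=\binom{n+1}{1}\,y=(n+1)\,y \pmod 2.
\]
Using the relation $y^{n+1}=0$ but $y^n\neq 0$ from~(\ref{relations}) (so that $y\neq 0$ whenever $n\ge 1$), we conclude that for $n\ge 1$ the class $w_2(\tau)$ vanishes iff $n+1$ is even, i.e.\ iff $n$ is odd. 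The case $n=0$ gives $L^1(2^e)=S^1$, which is parallelizable and hence trivially spin; this accounts for the exceptional value listed in the statement.

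The only genuinely delicate point is justifying the stable tangent bundle formula $\tau\oplus\real\cong(n+1)\eta_{\real}$ for $L^{2n+1}(2^e)$ (as opposed to just for $\com\P^n$ or for $S^{2n+1}$). I expect this to be the main obstacle, though a mild one: it follows because $S^{2n+1}\subset\com^{n+1}$ has trivial normal bundle, so $TS^{2n+1}\oplus\real$ is trivial of rank $2n+2$, and the diagonal $\bz/2^e$-action on $\com^{n+1}$ is free on $S^{2n+1}$ and descends to the Whitney sum of $n+1$ copies of $\eta$ on the quotient; passing to $\bz/2^e$-quotients and using that $TS^{2n+1}$ descends to $T L^{2n+1}(2^e)$ gives the claim. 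One should also remark that the argument is insensitive to $e$, since $w_2$ only sees the mod~$2$ reduction $y$ of the first Chern class of $\eta$, which is nonzero for every $e\ge 1$; this is why the conclusion is uniform in~$e$.
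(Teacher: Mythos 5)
Your proof is correct, but it takes a deliberately different route from the paper's. The paper proves the lemma via Wu's formula $w=\Sq(v)$: since $v_1=w_1=0$, one has $w_2=v_2$, and $v_2$ is detected by whether $\Sq^2$ acts nontrivially on $H^{2n-1}(L^{2n+1}(2^e);\mathbb{Z}/2)$ (a purely cohomological computation with the known ring structure and Steenrod action); the authors explicitly say they do this ``thus avoiding the need for explicit descriptions of tangent bundles.'' You instead invoke exactly the thing they sidestep: the stable splitting $\tau\oplus\mathbb{R}\cong(n+1)\eta_{\mathbb{R}}$, from which $w(\tau)=(1+y)^{n+1}$ and $w_2(\tau)=(n+1)y$ falls out immediately, and you supply a correct sketch of why the splitting holds (descend the trivialization $TS^{2n+1}\oplus\nu\cong S^{2n+1}\times\mathbb{C}^{n+1}$ through the free $\mathbb{Z}/2^e$-action, noting the radial normal line is invariant and $(S^{2n+1}\times\mathbb{C}^{n+1})/(\mathbb{Z}/2^e)$ is precisely $(n+1)\eta_{n,e}$ as the paper itself records in Section~\ref{newcomp}). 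Both arguments are short and standard; yours is arguably more elementary and self-contained once the tangent bundle formula is granted, while the paper's is more economical in that it never needs that formula at all, only the cohomology ring and the $\Sq^2$-action inherited from $\mathbb{C}\P^n$. One small stylistic note: your case split for $n=0$ is not really needed, since $H^2(S^1)=0$ makes $w_2=(n+1)y=0$ automatically, so the uniform formula already covers it.
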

\begin{proof} 
Since $L^{2n+1}(2^e)$ is orientable, everything reduces to a 
straightforward calculation of the second Stiefel-Whitney class
$w_2=w_2(L^{2n+1}(2^e))$.
We present this using Wu's formula $w=\Sq(v)$ (\cite[Theorem~11.14]{MS}), thus
avoiding the need for explicit descriptions of tangent bundles. 
Wu's first relation gives $v_1=w_1$, which we already observed to be trivial.
Then Wu's second relation reduces to $v_2=w_2$. Thus, it suffices to 
observe that $\Sq^2$ acts trivially on 
$H^{2n-1}(L^{2n+1}(2^e);\mathbb{Z}/2)$ precisely when $n$ is odd. 
(The mod $2$ cohomology ring for $L^{2n+1}(2^e)$ can easily be obtained 
from~\cite[Example~3E2]{Hatcher}, whereas the action of the Steenrod algebra 
follows from the well known situation for $\mathbb{C} \P^n$.)
\end{proof}

\section{Inductive construction of  embeddings}\label{newcomp}
Following the inductive methods in~\cite{MM}, we now
produce explicit Euclidean embeddings for $2^e$-torsion lens spaces.
In this section we use the shorter notation $L_{n,e}$ for $L^{2n+1}(2^e)$.

\smallskip
For $e\ge 1$, consider $\mathbb{Z}/2^e$ as the subgroup of $S^1$
of $2^{e\hspace{.05mm}}$th roots of unity. 
For $k,j\ge 0$, think of $S^{2(k+j+1)+1}$ as the join of $S^{2k+1}$ and 
$S^{2j+1}$ via the explicit homeomorphism $\phi\colon S^{2k+1}\star
S^{2j+1}\rightarrow S^{2(k+j+1)+1}$ sending $(x,y,t)$ into $(\sqrt{1-t}x,
\sqrt{t}y)$. Under this identification, the standard
$\mathbb{Z}/2^e$-action on $S^{2(k+j+1)+1}$ takes the form 
\begin{equation}\label{action0}
\omega(x,y,t)=(\omega x,\omega y,t).
\end{equation}
By restriction, this yields actions on the 
subsets $L',R'\subset S^{2k+1}\star S^{2j+1}$ determined by the 
conditions $t\le 1/2$ and $t\ge 1/2$, respectively. At the orbit space level 
we get a decomposition 
\begin{equation}\label{union}
L_{k+j+1,e}=L\,\cup\, R
\end{equation}
where $L$ (resp., $R$)
is the normal real disc bundle for the embedding of $L_{k,e}$ 
(resp., $L_{j,e}$) in $L_{k+j+1,e}$ coming from the first 
(resp., last) coordinates. Explicit models for $L$ and $R$ are then given by
\begin{equation}\label{parts}
L=\left.\left(S^{2k+1}\times D^{2j+2}\right)\right/\mathbb{Z}/2^e
\quad\mbox{and}\quad
R=\left.\left(D^{2k+2}\times S^{2j+1}\right)\right/\mathbb{Z}/2^e
\end{equation}
where, as indicated by~(\ref{action0}), the $\mathbb{Z}/2^e$ action is 
diagonal in both cases. More familiar descriptions are obtained 
from the following considerations.

\smallskip
Let $H_m$ stand for the canonical complex line bundle 
over $\mathbb{C} \P^m$, and let 
$\eta_m$ denote the complex conjugate bundle $H_m^*$. 
The map $\lambda \colon S^{2m+1}\times \mathbb{C}\rightarrow 
\mathbb{C} \P^m\times 
\mathbb{C}^{m+1}$ sending $(x,z)$ into $\left([x],zx\right)$, where
$[x]$ stands for the complex line  determined by $x$, satisfies 
$\lambda(x,\omega z)=\lambda(\omega x,z)$ for $\omega\in S^1$,
and produces the standard model
for $H_m$ as $S^{2m+1}\times_{S^1}\mathbb{C}$ ---the orbit space of the 
$S^1$ action on
$S^{2m+1}\times\mathbb{C}$ given by $\omega(x,z)=(\omega^{-1} x,\omega 
z)$. In particular, a model for $\eta_m$ is given by 
$(S^{2m+1}\times\mathbb{C})/S^1$ 
(diagonal action now). As a result, letting $\eta_{m,e}$ denote the 
pull-back of $\eta_m$ under the canonical map 
$L_{m,e}\rightarrow \mathbb{C} \P^m$,
we get that a model for the Whitney multiple $n\eta_{m,e}$ is given by
$(S^{2m+1}\times\mathbb{C}^n)/\mathbb{Z}/2^e$ (diagonal action, of course). 
Consequently, we have proved:

\begin{lema}\label{int1}
$L=D((j+1)\eta_{k,e})$ and $R=D((k+1)\eta_{j,e})$, the total spaces of 
the disc bundles for (the realifications of) 
$(j+1)\eta_{k,e}$ and $(k+1)\eta_{j,e}$. 
\end{lema}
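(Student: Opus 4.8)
The plan is to unwind the two orbit-space models in~(\ref{parts}) and match them against the bundle models described just above. First I would treat the case of $L$; the case of $R$ is symmetric under interchanging the roles of the first $S^{2k+1}$ and last $S^{2j+1}$ coordinates (equivalently, swapping $k\leftrightarrow j$), so it suffices to handle one of them. From~(\ref{parts}) we have $L=\left(S^{2k+1}\times D^{2j+2}\right)/(\mathbb{Z}/2^e)$, where the action is diagonal, i.e.\ $\omega(x,w)=(\omega x,\omega w)$ for $x\in S^{2k+1}$ and $w\in D^{2j+2}\cong D\left(\mathbb{C}^{j+1}\right)$, with $\mathbb{Z}/2^e\subset S^1$ acting on $\mathbb{C}^{j+1}$ by scalar multiplication.

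Next I would recall from the discussion preceding Lemma~\ref{int1} that a model for the $n$-fold Whitney sum $n\eta_{m,e}$ is $\left(S^{2m+1}\times\mathbb{C}^n\right)/(\mathbb{Z}/2^e)$ with the diagonal action, this being the pullback along $L_{m,e}\to\mathbb{C}\P^m$ of the standard realization of $\eta_m=H_m^*$ as $S^{2m+1}\times_{S^1}\mathbb{C}$ with the diagonal $S^1$-action. Taking $m=k$ and $n=j+1$, the total space of the associated disc bundle $D\bigl((j+1)\eta_{k,e}\bigr)$ is obtained by restricting the fiber coordinate to the unit disc, namely $\left(S^{2k+1}\times D\left(\mathbb{C}^{j+1}\right)\right)/(\mathbb{Z}/2^e)$ with the diagonal action. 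This is visibly the same space as the model for $L$ in the previous paragraph, once one identifies $D^{2j+2}$ with the unit disc in $\mathbb{C}^{j+1}$ and checks that the two $\mathbb{Z}/2^e$-actions agree on the nose (both are $\omega(x,w)=(\omega x,\omega w)$). This identification is not merely a homeomorphism of total spaces but is compatible with the projections to $L_{k,e}$, since on both sides the base is the quotient $S^{2k+1}/(\mathbb{Z}/2^e)$ and the bundle projection is induced by $(x,w)\mapsto x$; hence it is an isomorphism of disc bundles over $L_{k,e}$. By the symmetric argument, $R\cong D\bigl((k+1)\eta_{j,e}\bigr)$ over $L_{j,e}$.

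The only genuinely delicate point—though it is more a matter of bookkeeping than of real difficulty—is reconciling the \emph{sign conventions} in the two $S^1$-actions. In the standard model $H_m=S^{2m+1}\times_{S^1}\mathbb{C}$ the circle acts by $\omega(x,z)=(\omega^{-1}x,\omega z)$, whereas $\eta_m=H_m^*$ is realized with the \emph{diagonal} action $\omega(x,z)=(\omega x,\omega z)$; one must confirm that it is indeed the conjugate bundle $\eta_m$ (and not $H_m$ itself) that matches the action~(\ref{action0}) inherited from the join decomposition, since~(\ref{action0}) gives $\omega(x,y,t)=(\omega x,\omega y,t)$, i.e.\ the diagonal action on the fiber coordinate $y\in\mathbb{C}^{j+1}$. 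This is exactly why the statement features $\eta_{k,e}$ and $\eta_{j,e}$ rather than $H_{k}$ and $H_{j}$, and I would make this comparison explicit. Beyond that, I would simply note that restricting from the full fiber $\mathbb{C}^{j+1}$ to the unit disc $D^{2j+2}$ is compatible with all the identifications in play, so that~(\ref{parts}) delivers precisely the disc-bundle total spaces claimed, completing the proof.
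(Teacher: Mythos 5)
Your proof is correct and takes essentially the same route as the paper: it unwinds the diagonal-action model for $n\eta_{m,e}$ established just before the lemma (including the sign point that the diagonal action gives $\eta_m=H_m^*$ rather than $H_m$, which in the paper is encoded by the map $\lambda$ and the identity $\lambda(x,\omega z)=\lambda(\omega x,z)$), and then matches it against the explicit models for $L$ and $R$ in~(\ref{parts}). The paper treats this as immediate from the preceding discussion, and your more detailed verification of the actions and bundle projections fills in exactly that step.
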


The easy geometric argument in the proof of~\cite[Lemma~3.1]{steer} can 
now be adapted to the current situation in order to identify
$L\,\cap\, R=\left(S^{2k+1}\times S^{2j+1}\right)\left/
\left(\rule{0mm}{3.5mm}\mathbb{Z}/2^e\right)\right.$ 
(corresponding to $t=1/2$ under $\phi$) with the sphere 
bundle of the realification of the (exterior) tensor product
$\eta^*_{k,e}\otimes\eta_{j,e}$. But in order to directly apply the results 
in~\cite{MM}, we switch to their sphere bundle and mapping cylinder notation:
$L\,\cap\, R$ is the total space of both sphere bundles
$S((j+1)\eta_{k,e})$ and $S((k+1)\eta_{j,e})$. Moreover, letting
$\pi_k\colon S((j+1)\eta_{k,e})\to L_{k,e}$ and 
$\pi_j\colon S((k+1)\eta_{j,e})\to L_{j,e}$ stand for the bundle 
projections, the considerations in~(\ref{union}) and~(\ref{parts}) 
yield the following reinterpretation of Lemma~\ref{int1}.

\begin{lema}\label{int2}
$L$ and $R$ are, respectively, 
the mapping cylinders of $\pi_k$ and $\pi_j$. Likewise, $L_{k+j+1,e}$
is the double mapping cylinder $M(\pi_k,\pi_j)$.
\end{lema}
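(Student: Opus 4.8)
The plan is to unwind the definitions in~(\ref{union})--(\ref{parts}) and match them against the standard description of a mapping cylinder of a sphere bundle projection. Recall that for a fibre bundle $p\colon S(E)\to B$ obtained as the sphere bundle of a vector bundle $E\to B$, the mapping cylinder $\mathrm{Cyl}(p)$ is canonically homeomorphic to the disc bundle $D(E)$: the homeomorphism sends a point $(s,t)\in S(E)\times[0,1]$ (with the $t=0$ end collapsed via $p$) to $t\cdot s\in D(E)$, the zero section being the image of the collapsed end. So by Lemma~\ref{int1}, which already identifies $L=D((j+1)\eta_{k,e})$ and $R=D((k+1)\eta_{j,e})$, it suffices to exhibit these homeomorphisms compatibly with the bundle projections $\pi_k$ and $\pi_j$.

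Concretely, first I would write down the explicit homeomorphism for $L$. Using the model $L=(S^{2k+1}\times D^{2j+2})/(\mathbb{Z}/2^e)$ from~(\ref{parts}), a point in $D^{2j+2}\subset\mathbb{C}^{j+1}$ can be written (non-uniquely at the origin) as $t\cdot y$ with $t\in[0,1]$ and $y\in S^{2j+1}$; the assignment $[x,t y]\mapsto ([x,y],t)$ then descends, under the diagonal $\mathbb{Z}/2^e$-action~(\ref{action0}), to a well-defined homeomorphism from $L$ to the mapping cylinder of $\pi_k\colon S((j+1)\eta_{k,e})=(S^{2k+1}\times S^{2j+1})/(\mathbb{Z}/2^e)\to L_{k,e}$, collapsing the $t=0$ end exactly along $\pi_k$. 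The argument for $R$ is the mirror image, swapping the roles of the two join coordinates. The second assertion of the lemma is then immediate: the double mapping cylinder $M(\pi_k,\pi_j)$ is by definition $\mathrm{Cyl}(\pi_k)\cup_{S((j+1)\eta_{k,e})}\mathrm{Cyl}(\pi_j)$, glued along their common base-of-cylinder copies of $L\cap R$, and~(\ref{union}) together with the identification of $L\cap R$ as that common sphere bundle total space says precisely that $L_{k+j+1,e}=L\cup_{L\cap R}R$ is this union.

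I expect the only mildly delicate point to be bookkeeping the equivariance: one must check that rescaling the disc-bundle coordinate $z\in D^{2j+2}$ as $z=t y$ is compatible with the diagonal circle/$\mathbb{Z}/2^e$ actions (it is, since $\omega(ty)=t(\omega y)$ and $t$ is fixed), and that the collapsing map on the $t=0$ slice is genuinely $\pi_k$ rather than some twist of it — this follows from the model of $\eta_{m,e}$ built just before Lemma~\ref{int1}. Since the homeomorphism $\phi$ of the join and the action~(\ref{action0}) are already fixed, and the heavy identifications have been made in Lemmas~\ref{int1} and the discussion of $L\cap R$, no real obstacle remains; the proof is a short verification, and I would present it as such, citing the adaptation of~\cite[Lemma~3.1]{steer} already invoked above for the identification of $L\cap R$.
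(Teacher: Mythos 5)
Your proposal is correct and matches the paper's intent exactly: the paper gives no explicit proof of Lemma~\ref{int2}, presenting it as an immediate reinterpretation of Lemma~\ref{int1} together with the identification of $L\cap R$ as the common sphere-bundle total space, which is precisely what you spell out (via the standard homeomorphism between a disc bundle and the mapping cylinder of its sphere-bundle projection, plus the $\mathbb{Z}/2^e$-equivariance check). Your writeup simply makes explicit the verification that the paper leaves to the reader.
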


Here the notation is as in~\cite[Section~5]{MM}. In particular, 
Theorems~2.2 and~5.2 in~\cite{MM} yield the following inductive 
method for constructing embeddings of lens spaces (compare 
to~\cite[Proposition~1]{naka}).

\begin{proposicion}\label{inductive}
Let $L_{k,e}\subset \mathbb{R}^\alpha$,
$L_{j,e}\subset \mathbb{R}^\beta$, and 
$(k+1)\eta_{j,e}\subset \mathbb{R}^{\sigma+\beta}$ be embeddings\footnote{Only 
the first of the three hypothetical embeddings in Proposition~\ref{inductive} 
needs to be smooth.}$\!$, and assume that the first embedding admits 
$\sigma$ everywhere linearly independent normal sections. Then, there is a 
(topological) embedding $L_{k+j+1,e}\subset
\mathbb{R}^{\alpha+\beta+1}$, provided either one of 
the following two numerical conditions holds:
\begin{itemize}
\item[(i)] $\;\;\sigma+\beta>4j+2$.\vspace{-.7mm}
\item[(ii)] $\;\;\sigma+\beta=4j+2$ and $2k+3\le 8a+2^b$, where 
$\nu(2j+2)=4a+b$, with $0\le b\le 3$.
\end{itemize}
\end{proposicion}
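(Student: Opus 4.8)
The plan is to feed the decomposition $L_{k+j+1,e}=M(\pi_k,\pi_j)$ from Lemma~\ref{int2} into the general embedding machinery of~\cite{MM}, where a double mapping cylinder is embedded by embedding the two ends and then ``spreading'' the cylinder in one extra dimension. Concretely, I would first invoke~\cite[Theorem~2.2]{MM}: given an embedding $L_{j,e}\subset\mathbb{R}^\beta$ and an embedding of the bundle $(k+1)\eta_{j,e}$ (whose sphere bundle is $L\cap R$) into $\mathbb{R}^{\sigma+\beta}$, one obtains an embedding of the mapping cylinder $R=M(\pi_j)$ together with a prescribed normal structure along $L\cap R\subset R$; the number $\sigma$ records how many normal sections of that structure are available. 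Symmetrically, the embedding $L_{k,e}\subset\mathbb{R}^\alpha$ with its $\sigma$ independent normal sections gives, via the same theorem, an embedding of $L=M(\pi_k)$ compatible with the $\sigma$-dimensional normal data along $L\cap R\subset L$. The point of requiring exactly $\sigma$ independent normal sections on the $L_{k,e}$ side, and an embedding of codimension $\sigma$ (i.e.\ $(k+1)\eta_{j,e}\subset\mathbb{R}^{\sigma+\beta}$) on the $R$ side, is that these two pieces of normal data match up over $L\cap R$, so the two partial embeddings can be glued.

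The second step is to apply~\cite[Theorem~5.2]{MM}, the gluing/double-mapping-cylinder statement, which takes the two compatible embeddings of $L$ and $R$ produced above and yields an embedding of $M(\pi_k,\pi_j)=L_{k+j+1,e}$ into $\mathbb{R}^{\alpha+\beta+1}$, the ``$+1$'' being the cylinder coordinate. This is where the numerical hypotheses enter: the gluing in~\cite{MM} requires that a certain normal bundle over $L\cap R$ be embeddable/immersible with the right codimension, and more delicately that an obstruction to isotoping the two normal framings into agreement vanishes. Condition~(i), $\sigma+\beta>4j+2$, places us above the relevant metastable threshold for $L\cap R$ (a manifold of dimension $2(k+j)+1$ mapping to $L_{j,e}$), so the obstruction automatically dies for dimension reasons and Haefliger-type uniqueness applies. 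Condition~(ii) is the borderline case $\sigma+\beta=4j+2$, where the obstruction lives in a single nontrivial group; here one copies the computation in~\cite{MM} showing that, with $\nu(2j+2)=4a+b$, the relevant obstruction group is cyclic of order governed by $8a+2^b$, and the inequality $2k+3\le 8a+2^b$ is exactly what forces the class coming from the $L_{k,e}$ side to be zero. The translation of the James-number/vector-field bound from~\cite{MM} into the $\eta_{j,e}$-bundle language for lens spaces is routine once one notes that $\eta_{j,e}$ pulls back from $\mathbb{C}\P^j$, so its relevant characteristic classes (and hence $\nu(2j+2)$) are read off just as for $\mathbb{C}\P^j$.

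The main obstacle I expect is the borderline case~(ii): verifying that the gluing obstruction in~\cite[Theorem~5.2]{MM} genuinely takes values in a group whose order is controlled by $8a+2^b$ for the \emph{lens space} $L\cap R=S^{2k+1}\times_{\mathbb{Z}/2^e}S^{2j+1}$, rather than for a product of spheres or a projective space. This requires checking that the extra $2$-torsion in $\pi_1$ does not enlarge the obstruction group — in other words, that the relevant stable homotopy / $KO$-theoretic computation factors through the pull-back from $\mathbb{C}\P^j$, so that the classical vector-field bound still applies verbatim. I would handle this exactly as~\cite{MM} does, using that all the embeddings involved (except the first) may be merely topological, so one works in the metastable range where Haefliger's theorem identifies topological and smooth isotopy classes and the obstruction theory is governed by ordinary (co)homology with twisted coefficients, which for $\eta_{j,e}$ reduces to the $\mathbb{C}\P^j$ computation. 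Everything else — the explicit identifications $L=D((j+1)\eta_{k,e})$, $R=D((k+1)\eta_{j,e})$, $L\cap R=S((j+1)\eta_{k,e})$ — is already supplied by Lemmas~\ref{int1} and~\ref{int2}, so the proof is essentially a citation of~\cite[Theorems~2.2 and~5.2]{MM} with the dictionary of Lemma~\ref{int2} and a short verification of the numerical bookkeeping.
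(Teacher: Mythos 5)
Your proposal takes essentially the same route as the paper, whose proof of Proposition~\ref{inductive} consists precisely of the double-mapping-cylinder identification in Lemma~\ref{int2} followed by a direct citation of~\cite[Theorems~2.2 and~5.2]{MM}. One small clarification on the ``main obstacle'' you flag: the worry that the extra $2$-torsion in $\pi_1$ of the lens space might enlarge the obstruction group in case~(ii) is misplaced, because the quantity $8a+2^b$ with $\nu(2j+2)=4a+b$ is nothing but the Hurwitz--Radon number $\rho(2j+1)$ for the \emph{sphere} $S^{2j+1}$ (cf.\ the use of $F(2n+1)$ in Proposition~\ref{encaje}), and~\cite[Theorem~5.2]{MM} is stated for arbitrary double mapping cylinders with this sphere-level bound already built in, so no separate $KO$-theoretic or twisted-coefficient verification is required once Lemma~\ref{int2} is available.
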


\begin{nota}\label{diferencias}{\em
From~(\ref{union}) and~(\ref{parts}) we also get the explicit 
identifications $(j+1)\eta_{k,e}=L_{k+j+1,e}-L_{j,e}$ and 
$(k+1)\eta_{j,e}=L_{k+j+1,e}-L_{k,e}$. These are the models
used in~\cite{naka}.
}\end{nota}

For a fixed $k$, Proposition~\ref{inductive} allows us to start 
an inductive process (which we call a {\it round\hspace{.5mm}}) 
for constructing embeddings of lens spaces. The best results are 
obtained for rounds with $k$ one less than a power of
$2$ (when $\sigma_{k,e}$ in the next lemma is largest).
The first (third) embedding in the hypothesis of 
Proposition~\ref{inductive} is the ingredient triggering (feeding) 
each such round. The following result, a consequence of~\cite[Theorem~2.2]{MM} 
and the summary of immersions for lens spaces 
in~\cite[Section~1]{w.tom}, provides us with the triggering ingredient.

\begin{lema}\label{igniting}
There are embeddings $L_{k,e}\subset \mathbb{R}^{4k+2}$ with $\sigma_{k,e}$ 
everywhere linearly independent normal sections, for the triples 
$(k,e,\sigma_{k,e})$ indicated by the columns of Table~\ref{tabla2}.
\begin{table}[h]
\centerline{
\begin{tabular}{|c|c|c|c|c|}\hline
\rule{0mm}{4.5mm} $k$ & $3$ & $3$ & $3$ & $$1 \\ \hline
\rule{0mm}{4.5mm} $e$ & $1$ & $2$ & $\geq 3$ & $\geq 1$ \\ \hline
\rule{0mm}{4.5mm} $\sigma_{k,e}$ & $7$ & $5$ & $4$ & $3$ \\ \hline
\end{tabular}}
\caption{Values of $\sigma_{k,e}$}
\label{tabla2} 
\end{table}
\end{lema}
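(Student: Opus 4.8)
The plan is to establish each of the four columns of Table~\ref{tabla2} separately, since they rely on genuinely different inputs. In every case the embedding $L_{k,e}\subset\mathbb{R}^{4k+2}$ is the codimension-one HHMP embedding (or, for $k=1$, the optimal $L_{1,e}=S^3/(\mathbb{Z}/2^e)\subset\mathbb{R}^5$ noted in Remark~\ref{n12}), so the real content is the count $\sigma_{k,e}$ of everywhere linearly independent normal sections. By~\cite[Theorem~2.2]{MM}, the normal bundle of such a codimension-one embedding $L_{k,e}\subset\mathbb{R}^{4k+2}$ is stably $\overline{\tau}$, the stable normal bundle of $L_{k,e}$, which after a rank adjustment is $(2k+1)\eta_{k,e}$ minus a trivial bundle of the appropriate rank; concretely the relevant geometric normal bundle is a rank-$(2k+1)$ bundle whose span is governed by the span (geometric dimension) of a multiple of $\eta_{k,e}$. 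So the first step is to translate "at least $\sigma_{k,e}$ independent normal sections" into the assertion that a specific small multiple of $\eta_{k,e}$, added to a trivial line bundle, has geometric dimension at most $2k+1-\sigma_{k,e}$.

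For the $k=1$ column the normal bundle of $L_{1,e}\subset\mathbb{R}^5$ is a trivial $2$-plane bundle (Fact~\ref{wellknown}, since $S^3/(\mathbb{Z}/2^e)$ is parallelizable), hence after the stabilization built into~\cite[Theorem~2.2]{MM} one gets a rank-$3$ bundle that is stably trivial over a $3$-complex, therefore trivial, yielding $\sigma_{1,e}=3$ for all $e\ge1$; this is the easy case. For the three $k=3$ columns I would invoke the summary of immersions and spanning results for $L^7(2^e)$ collected in~\cite[Section~1]{w.tom}: these record, for each $e$, the best known bound on the geometric dimension of the relevant multiple of $\eta_{3,e}$. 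For $e=1$ this is the classical statement that $\P^7$ is parallelizable, which combined with~\cite[Theorem~2.2]{MM} forces the full normal bundle to split off $7$ sections, hence $\sigma_{3,1}=7$ (Mahowald/Rees territory). For $e=2$ and $e\ge3$ the numbers $5$ and $4$ are exactly the spanning figures that drop out of the $ku$- or $K$-theoretic obstruction computations tabulated in~\cite{w.tom}, reflecting that $L^7(2^e)$ becomes less parallelizable as $e$ grows; I would simply quote those and do the bookkeeping that turns a span statement for a multiple of $\eta_{3,e}$ into a normal-section count for the codimension-one embedding.

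The main obstacle is the bookkeeping in the middle: keeping consistent track of (i) the passage from the geometric normal bundle of the embedding $L_{k,e}\subset\mathbb{R}^{4k+2}$ to a stable class, (ii) the identification of that stable class with $(2k+1)\eta_{k,e}$ up to trivial summands using $H^*(L_{k,e};\mathbb{Z}/2)$ and the known total Stiefel-Whitney class, and (iii) the reduction, via~\cite[Theorem~2.2]{MM}, of "$\sigma$ independent sections of the geometric normal bundle" to "geometric dimension $\le 2k+1-\sigma$ of that multiple of $\eta_{k,e}$". The potential pitfall is an off-by-trivial-line-bundle discrepancy between the stable span and the geometric span in this low-dimensional range; this is precisely the kind of subtlety that produced the flaw in~\cite{MM} alluded to in Remark~\ref{otros}, so I would be careful to cite~\cite{w.tom,tom} (rather than~\cite{GMflaw}) for the spanning inputs and to verify that the dimensions $2k+1=3,7$ are low enough that stable and unstable span agree for the bundles in question. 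Once that is pinned down, the four columns follow by direct substitution.
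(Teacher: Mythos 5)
Your proposal correctly identifies the relevant inputs — the immersion/spanning data for $L^7(2^e)$ in \cite[Section~1]{w.tom}, \cite[Theorem~2.2]{MM}, and the parallelizability observations for the $k=1$ column and the $(k,e)=(3,1)$ column (which match what the paper records in Remark~\ref{como}) — and the general strategy matches the paper's sketch. However, several specifics are off and would need to be repaired.

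First, the phrase ``codimension-one HHMP embedding'' is misleading: the embedding $L_{k,e}\subset\mathbb{R}^{4k+2}$ asserted in the lemma has codimension $(4k+2)-(2k+1)=2k+1$, and is one Euclidean dimension above the HHMP target $\mathbb{R}^{4k+1}$. In particular, for $k=1$ the lemma asserts $L_{1,e}\subset\mathbb{R}^{6}$, not $\mathbb{R}^5$; the embedding $L_{1,e}\subset\mathbb{R}^5$ with trivial rank-$2$ normal bundle is the \emph{input} from Remark~\ref{n12}/Remark~\ref{como}, from which one passes to $\mathbb{R}^{6}$ by adding a trivial line. More to the point, the paper does not obtain the $\mathbb{R}^{4k+2}$ embedding by decorating the HHMP embedding; it starts from the low-codimension immersions $\P^7\looparrowright\mathbb{R}^8$, $L^7(4)\looparrowright\mathbb{R}^9$, $L^7(2^e)\looparrowright\mathbb{R}^{10}$ ($e\ge3$) recorded in \cite[Section~1]{w.tom}, stabilizes them into $\mathbb{R}^{14}$, and then invokes \cite[Theorem~2.2]{MM} to regularly homotope the resulting immersion $L^7(2^e)\looparrowright\mathbb{R}^{14}$ to an embedding \emph{with the same normal bundle}, thereby preserving the count of trivial sections. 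That last step is the real content of the citation of \cite[Theorem~2.2]{MM}; your proposal cites the theorem but does not clearly articulate that this is where it is needed, and without it, knowing the geometric dimension of the stable normal class does not by itself control the normal bundle of the rank-$(2k+1)$ normal bundle of an embedding in $\mathbb{R}^{2(2k+1)}$, which is exactly at the boundary of the stable range.

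Second, the stable normal class is misidentified. Since $\tau(L^{2k+1}(2^e))\oplus\mathbb{R}\cong(k+1)\eta_{k,e}$ (realification), the stable normal bundle is represented by $-(k+1)\eta_{k,e}$ plus a trivial bundle, not by $(2k+1)\eta_{k,e}$ minus a trivial bundle; these two classes differ by $(3k+2)\eta_{k,e}$, which is nontrivial in $\widetilde{KO}(L_{k,e})$. The arithmetic slip does not derail the overall strategy, but it would need to be corrected before the ``translate span into geometric dimension of a multiple of $\eta_{k,e}$'' bookkeeping can be carried out.
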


\begin{nota}\label{como}{\em
In getting the value $\sigma_{3,1}=7$, one needs to observe that 
the normal bundle to an immersion $P^7\subseteq\mathbb{R}^8$ is trivial---this 
is an easy calculation with Stiefel-Whitney classes.
Similarly, the value $\sigma_{1,e}=3$ requires the observation that the normal 
bundle to an embedding $L_{1,e}\subset\mathbb{R}^5$ is trivial---this 
uses the fact noted in the first statement of Remark~\ref{n23}.
}\end{nota}

\begin{nota}\label{otros}{\em
The thorough reader will note that the 
information recollected in~\cite{w.tom} allows us to 
go for one further round (with $k=7$
and $\sigma_{7,e}=7$ for $e\geq2$, or the $\sigma_{7,1}=8$ considered 
in~\cite{MM}). [By the way, the flaw in~\cite{MM} comes from their 
(improperly-argued) large values of
$\sigma_{k,1}$, for $k=2^\mu-1$ and $\mu>4$.] However, it is surprising 
to see how the rounds correponding to large $k$'s (with $\mu\geq3$) 
lose strength (even with the improperly-argued $\sigma_{k,1}$'s).
And in fact, the third round ($k=7$) produces no new information 
for $2^e$-torsion lens spaces having $e\geq2$. Remark~\ref{perdida} 
below gives 
further details focusing on the situation in~\cite{MM} for real projective 
spaces.
}\end{nota}

The ingredient feeding the two rounds we need (with $k=1,3$) comes from
Lemma~\ref{feeding} below. Its $e=1$ analogue is proved in~\cite{MM}
by a straightforward application of Milgram's linear algebra techniques
in~\cite{milgramlinear}. The general case will be proved in
the next section using obstruction theory. 

\begin{lema}\label{feeding}
For $\mu=1,2$ and positive integers $\ell$ and $e$,
set $\,i=2^\mu \ell-1$, and assume $\ell\geq2$ when $e>2=\mu$. Then 
$2^\mu\eta_{i,e}\subset\mathbb{R}^{4i+3}$. If in addition
$\ell$ is even with $\alpha(\ell)\geq 2$, then $2^\mu\eta_{i,e}\subset
\mathbb{R}^{4i+2}$.
\end{lema}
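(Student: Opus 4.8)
The plan is to produce the required embeddings of the bundle spaces $2^{\mu}\eta_{i,e}$ by a careful obstruction-theoretic analysis, replacing the linear-algebra input of Milgram used in the $e=1$ case of \cite{MM}. Recall that the total space of $2^{\mu}\eta_{i,e}$ (realified) has dimension $(2i+1)+2\cdot 2^{\mu}=2i+1+2^{\mu+1}$; for $\mu=1$ this is $2i+5$ and for $\mu=2$ this is $2i+9$, so the target dimension $4i+3$ in the first assertion gives an ambient codimension which—by the HHMP-type embedding result quoted in the introduction (\cite{MP}, \cite{VG})—places us comfortably in (or near) the metastable range, reducing the existence of a (topological, hence smooth by Haefliger) embedding to a vertical lifting problem in a modified Postnikov tower for the relevant Gauss map, with obstructions living in cohomology of the total space of $2^{\mu}\eta_{i,e}$ with coefficients in homotopy groups of a Stiefel-type fibre.

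First I would set up the obstruction tower precisely: fix the classifying map for the stable normal bundle of the total space $E(2^{\mu}\eta_{i,e})$, and ask for a compression of this map (or rather of the associated map to an appropriate Grassmannian/Thom-space model for embeddings à la Haefliger) through the stages of a modified Postnikov resolution. Because $i=2^{\mu}\ell-1$, the key numerical coincidences — $2i+1=2^{\mu+1}\ell-1$ and the divisibility $2^{\mu}\mid(i+1)$ — are exactly what make the relevant $k$-invariants vanish or become desuspensions of manageable cohomology operations. I would compute $H^{*}(E(2^{\mu}\eta_{i,e});\mathbb{Z}/2)$ via the Gysin/Thom isomorphism over $L_{i,e}=L^{2i+1}(2^{e})$, whose mod-2 cohomology is the familiar truncated algebra on the classes $x,y$ of \eqref{relations}, and then track the normal Stiefel-Whitney classes (whose form is constrained because $\eta_{i,e}$ pulls back from $\mathbb{C}\P^{i}$, so its Chern/SW classes are powers of $y$). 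The primary obstruction is a normal SW class which I expect to vanish by the $2^{\mu}\mid(i+1)$ condition (mirroring how $\overline{W}_{2n-2}=y^{n-1}$ simplifies under $\alpha(n)=1$ in the proof of Theorem~\ref{enepot}); the secondary and higher obstructions, living in groups like $H^{q}(E;\mathbb{Z}/2)$ for $q$ in the top few dimensions, I would kill either by the extra $\mathbb{Z}/2$ summands provided by the disc-bundle structure (the fibre $D^{2\cdot 2^{\mu}}$ gives room to rotate sections) or by invoking the immersion data for lens spaces recorded in \cite[Section~1]{w.tom}, exactly as the triggering Lemma~\ref{igniting} does.

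For the sharper conclusion $2^{\mu}\eta_{i,e}\subset\mathbb{R}^{4i+2}$ under the hypothesis that $\ell$ is even with $\alpha(\ell)\ge 2$, I would push the same lifting problem one dimension further. Here the top obstruction sits in $H^{4i+2}(E;\mathbb{Z}/2)$ (or its integral analogue), and the condition $\alpha(\ell)\ge2$ with $\ell$ even forces $\alpha(i+1)=\alpha(2^{\mu}\ell)=\alpha(\ell)\ge 2$ and $\nu(i+1)=\mu+\nu(\ell)\ge\mu+1$, which is precisely the arithmetic that makes the relevant Steenrod operation (a $\Sq^{2^{?}}$ applied to the Thom class times a power of $y$) act as zero on the obstruction class — the same phenomenon exploited in \cite{milgramlinear} and \cite{MM} to gain the last dimension, now verified cohomologically rather than by explicit vector-field constructions. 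The restriction $\ell\ge2$ when $e>2=\mu$ enters because for $\mu=2$, $e\ge3$ and $\ell=1$ the relevant low-dimensional cohomology of $L_{3,e}$ behaves differently (its $\Sq^{2}$-action and Bockstein $\beta_{e}$ from \eqref{relations} degenerate, since $\varepsilon=0$ but the Bockstein $\beta_e$ still detects $y$), which would leave an obstruction one cannot cancel; I would isolate that case and simply exclude it, as the statement does.

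\textbf{Main obstacle.} The hard part will be the bookkeeping of the higher obstructions in the top two or three dimensions of $E(2^{\mu}\eta_{i,e})$: one must show that, after the primary class is killed, the indeterminacy of the secondary operations is large enough to absorb what remains, and this requires knowing both the $\mathbb{Z}/2$-cohomology \emph{and} the action of the relevant secondary cohomology operations on it — which in turn depends delicately on $e$ through the Bockstein $\beta_{e}$ relating $x$ and $y$. Getting this right uniformly in $e$ (while correctly carving out the genuine exceptions $\ell=1$, $\mu=2$, $e\ge3$) is the crux, and is exactly the "careful obstruction theory analysis in the form of modified Postnikov towers" the introduction advertises as the replacement for Milgram's linear algebra; the details are carried out in Section~\ref{normal}.
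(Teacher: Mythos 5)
Your plan attacks the wrong lifting problem. You propose to run obstruction theory directly for a Euclidean embedding of the total space $E(2^{\mu}\eta_{i,e})$ via its Gauss map, with obstructions in $H^{*}(E;\pi_{*}(\text{fibre}))$. But the paper never does this, and with good reason: $E(2^{\mu}\eta_{i,e})$ is an \emph{open} manifold (or a manifold with boundary, if one uses the disc bundle), while the HHMP theorem and Haefliger's metastable classification that you invoke are for closed manifolds; moreover for $\mu=2$ the total space has dimension $2i+9$ and the target $4i+3$, so $2(4i+3)\ge 3(2i+10)$ requires $i\ge12$, i.e.\ $\ell\ge 4$ — outside the metastable range for exactly the small $\ell$ that start the inductive rounds. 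What the paper does instead is Proposition~\ref{encaje}: identify $(m-n)\eta_{n,e}$ as the normal bundle of $L_{n,e}\subset L_{m,e}$ (Lemma~\ref{nmlbl}), then show that if the stable normal bundle of $L_{m,e}$ \emph{restricted to} $L_{n,e}$ desuspends to a rank-$d$ bundle $\nu'$, one can choose a Whitney-range embedding $L_{n,e}\subset\mathbb{R}^{2m+d+1}$ whose normal bundle is $(m-n)\eta_{n,e}\oplus\nu'$, and then the tubular neighbourhood hands you the embedding of the sub-disc-bundle $(m-n)\eta_{n,e}$ for free. So the obstruction-theoretic core is a \emph{geometric-dimension} (lifting to $BO(d)$) problem on the closed base $L_{n,e}$, not an embedding problem on the open total space.

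This mis-identification also leads you to mislocate the $e$-dependence. You anticipate that "getting this right uniformly in $e$" is the crux because the Bockstein $\beta_e$ enters the secondary-operation bookkeeping on $H^{*}(L_{n,e};\mathbb{Z}/2)$. In fact, once $\ell\ge2$ the paper lifts the classifying map already at the level of $\mathbb{C}\P^{2^{\mu}\ell-1}$ (and for the hard $\lambda=0$, $\mu=2$ case, even at the level of $\mathbb{H}\P^{2j+1}$ via Sanderson's factorisation), so $e$ disappears from the computation entirely; the only genuine $e$-dependence is in the easy $\ell=1$ start, handled by the $\widetilde{KO}$-computation of~\cite{KS}. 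You also do not identify the concrete tools that make the lifting go through: the Davis–Mahowald criterion~\cite[Theorem~1.3(c)]{DM2} for the $\lambda=1$ cases (where your "relevant Steenrod operation acts as zero" heuristic is replaced by the precise inequalities $\nu\binom{p}{2m+2}\ge1$, $\nu\binom{p}{2m+4}\ge3$), and, for Proposition~\ref{fin}, the nontrivial evaluation of the secondary operation associated to $(\Sq^2\Sq^1)\cdot\Sq^2+\Sq^1\cdot\Sq^4=0$ supplied by Gitler–Mahowald–Milgram~\cite[Theorem~A]{GMM}. Without the reduction of Proposition~\ref{encaje} and the descent to $\mathbb{C}\P$/$\mathbb{H}\P$, your plan has no route to either of these inputs.
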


\begin{nota}\label{sharpen}{\em
While the first conclusion in this lemma is directly used within the 
two inductive rounds referred to above (to form columns 2 and 4 in 
Table~\ref{tabla1}), the second conclusion is used to get a one dimension
improvement, in certain cases, of the embeddings inductively obtained (to 
form columns 3 and 5 in Table~\ref{tabla1}).
}\end{nota}

We now give the straightforward deduction of Theorem~\ref{niam} from 
Proposition~\ref{inductive} and Lemmas~\ref{igniting} and~\ref{feeding}.

\begin{proof}[Proof of Theorem~\ref{niam}]
Let $(e,n,d)$ be as in the second column of Table~\ref{tabla1}, and
proceed by induction on $\ell\geq1$. For the start of the induction, the
embedding $L_{3,e}\subset\mathbb{R}^{11}$, apply
Proposition~\ref{inductive}(i) with $k=1$, $j=1$, $\alpha=5$, 
$\beta=5$ (coming from Remark~\ref{n12}), and $\sigma=2$ (as observed in 
Remark~\ref{como}), using Lemma~\ref{feeding} with $\mu=1$ and, of course,
$\ell=1$. Then, for the inductive step apply
Proposition~\ref{inductive}(i) with
$k=1$, $j=2\ell-1$, $\alpha=6$, $\beta=8\ell-4$ (one higher than 
the inductive hypothesis), and $\sigma=\sigma_{1,e}$ (as in 
Lemma~\ref{igniting}), using Lemma~\ref{feeding} with $\mu=1$ and, of course, 
the current inductive $\ell$. 

For $(e,n,d)$ as in the third column of Table~\ref{tabla1}, apply
Proposition~\ref{inductive}(ii), with $k=1$, $j=2\ell-1$, $\alpha=6$, 
$\beta=8\ell-5$ (coming from the first round above), 
and $\sigma=\sigma_{1,e}$ 
(as in Lemma~\ref{igniting}), using Lemma~\ref{feeding} with $\mu=1$ and 
$\ell=\frac{n-1}{2}$. 

Now let $(e,n,d)$ be
as in the fourth column of Table~\ref{tabla1} (we only consider 
the case $e\geq2$; see Remark~\ref{power} for a slight strengthening of the 
method in the original case $e=1$), and proceed by induction on $\ell\geq2$.
For the inductive step apply Proposition~\ref{inductive}(i) with
$k=3$, $j=4\ell-1$, $\alpha=14$, $\beta=16\ell+\delta(e)-15$ (one higher 
than the inductive hypothesis), and $\sigma=\sigma_{3,e}$ (as in 
Lemma~\ref{igniting}), using Lemma~\ref{feeding} with $\mu=2$ and, of course, 
the current inductive $\ell$. This time, in order to ground the induction 
($\ell=2$), we need to show the existence of an embedding
\begin{equation}\label{argue}
L_{7,e}\subset\mathbb{R}^{17+\delta(e)}.
\end{equation}
For $e\geq3$, this is given by the second column in Table~\ref{tabla1} 
(with $\ell=3$). But for $e=2$,~(\ref{argue}) is just the embedding for
the special type of triples in the statement of Theorem~\ref{niam}. 

In order to establish~(\ref{argue}) for $e=2$, apply 
Proposition~\ref{inductive}(i) with
$k=3$, $j=3$, $\alpha=14$, $\beta=11$ (coming from the start of the first
induction in this proof), and $\sigma=\sigma_{3,2}=5$ (as in 
Lemma~\ref{igniting}), using Lemma~\ref{feeding} with $\mu=2$ and 
$\ell=1$ ---still a valid case in Lemma~\ref{feeding} (all we need at 
this point is the weaker embedding $4\eta_{3,2}\subset\mathbb{R}^{16}$).

Finally, for $(e,n,d)$ as in the fifth column of Table~\ref{tabla1},
apply Proposition~\ref{inductive}(ii), with $k=3$, $j=4\ell-1$, $\alpha=14$, 
$\beta=16(\ell-1)+\delta(e)$ (coming from the 
second round above), and 
$\sigma=\sigma_{3,e}$ (as in Lemma~\ref{igniting}), using Lemma~\ref{feeding} 
with $\mu=2$ and $\ell=\frac{n-3}{4}$. 
\end{proof}

Since $\sigma_{1,e}=3$ and the embedding dimension of any $L_{1,e}$ is $5$,
the first inductive
round (and its improvements) produces embeddings (second and third columns in 
Table~\ref{tabla1}) whose Euclidean dimensions are independent of $e$. But the 
second round's output (fourth and fifth columns in Table~\ref{tabla1}) do 
depend on $e$. The next remark describes the situation (sharpened with 
the information in~\cite{Rees}) for $e=1$.  

\begin{nota}\label{power}{\em
Rees' PL topological embedding $\P^7\subset\mathbb{R}^{10}$ in~\cite{Rees}
improves by one dimension the embedding coming from the start of the 
first inductive round. When this information is fed into the start of
the second round, there results an embedding $\P^{15}\subset\mathbb{R}^{25}$, 
a corresponding improvement of~(\ref{argue}) in one dimension (for $e=2$),
but still 2 dimensions weaker than Rees' PL embedding $P^{15}\subset
\mathbb{R}^{23}$ in~\cite{Rees}. But when the latter embedding is fed
into the next step of the second round, there results the 
(Haefliger smoothable) embedding
$P^{23}\subset\mathbb{R}^{39}$, an optimal result according 
to~\cite{dontablas}. Moreover, this situation even shows that Milgram's
embedding $4\eta_{7,1}\subset\mathbb{R}^{31}$ in Lemma~\ref{feeding} is 
optimal, so that the corresponding sharpening in column 5 of 
Table~\ref{tabla1} indeed fails to apply in this case. These two phenomena 
repeat consistently throughout the second round (for $e=1$) yielding
the embeddings
\begin{equation}\label{out1}
\P^{8j+7}\subset\mathbb{R}^{16j+7} \mbox{ \ \ and \ \ }
4\eta_{4j-1,1}\subset\mathbb{R}^{16j-1}, \mbox{ \ for \ }j\geq2
\end{equation}
and
\begin{equation}\label{out2}
\P^{8j+7}\subset\mathbb{R}^{16j+6}, \mbox{ \ for even $j$ not a power of 2.}
\end{equation}
Note that the first embedding in~(\ref{out1}) gives the $e=1$ case of the
fourth column in Table~\ref{tabla1}. This embedding and, therefore, the
second embedding in~(\ref{out1}) are optimal, according 
to~\cite{dontablas}, if $j$ is a power of 2, that is, when the improvement
referred in Remark~\ref{sharpen} actually fails to apply. Likewise,~(\ref{out2})
gives the $e=1$ case of the fifth column in Table~\ref{tabla1}. It is worth 
noticing that, according to~\cite{dontablas},
the embedding in~(\ref{out2}) is
currently best known when $j$ is even and 
$\alpha(j)=2$.
}\end{nota}

\begin{nota}\label{perdida}{\em
Inductive rounds corresponding to 
values $k=2^\mu-1$ with $\mu\geq3$ have a dramatical loss of strength.
In fact, in the case of real projective spaces, this problem 
(noticed in the paragraph previous to Theorem~1.8 in~\cite{MM}) led to a 
rather weak lower bound for the embedding efficiency of $\P^n$---roughly 
$2\log_2(\alpha(n))$, the main (but faulty)
theorem in~\cite{MM}. The reason for 
this diminished strength of methods comes from the fact that, 
for $i$ as in Lemma~\ref{feeding}, 
the feeding ingredient $2^\mu\eta_{i,e}\subset\mathbb{R}^{4i+3}$ is 
(in~\cite{MM}'s wording) ``rarely satisfied'' when 
$\mu\geq3$ (not to mention the embedding in $\mathbb{R}^{4i+2}$), and
thus needs to be replaced with a higher dimensional (therefore weaker) 
Euclidean embedding. We offer here a simple numerical analysis (which requires
familiarity with the notation in~\cite[Lemma~1.5]{MM}) of how the 
problem arises in the case of real projective spaces. The $\ell$-th step 
in the $\mu$-th inductive round has the form
$$
L_{2^{\mu}-1,1}\subset\mathbb{R}^\alpha \mbox{ \ with $\sigma$ normal sections, 
\ }L_{\ell \cdot 2^{\mu}-1,1}\subset\mathbb{R}^\beta, \;\;\;2^\mu
\eta_{\ell \cdot 2^{\mu}-1,1}\subset\mathbb{R}^{\sigma+\beta}\Rightarrow \cdots.
$$
In these conditions, and in order for Proposition~\ref{inductive} to have
its strongest conclusion, it is necessary that $\sigma+\beta$ be (perhaps 
one more than) twice the dimension of $L_{\ell \cdot 2^{\mu}-1,1}$. This proves 
to be the case under the condition ``$p\leq\alpha(n)+\kappa(p,n)-
\alpha(p+1)$'' of~\cite[Lemma~1.5]{MM}. In our terms, 
such a condition easily translates into
\begin{equation}\label{milcon}
2^{\mu+1}-1\leq\alpha(\ell)+\mu+\kappa(\mu),
\end{equation}
where $\kappa(\mu)=1$ for $\mu=1$, and $\kappa(\mu)=4$ for $\mu\geq2$. 
Although~(\ref{milcon}) 
always holds when $\mu\leq2$, it indeed rarely holds for $\mu\geq3$. 
}\end{nota}

\section{Proof of Lemma~\ref{feeding}}\label{normal}
We continue to use last section's notation $L_{m,e}$ 
for the lens space $L^{2m+1}(2^e)$. 
Recall that a model for $\tau_{m,e}$, the tangent bundle of $L_{m,e}$, is 
given by the quotient of the space of pairs $(x,y)\in S^{2m+1}\times 
\mathbb{C}^{m+1}$, where $x$ and $y$ are perpendicular, by the diagonal
action of $\mathbb{Z}/2^e$. Thus $(x,y)$ and $(\omega x,\omega y)$ are 
identified in $\tau_{m,e}$ for $\omega\in\mathbb{Z}/2^e$.

\begin{lema}\label{nmlbl}
For $m\geq n\geq 0$, $(m-n)\eta_{n,e}$ is the normal bundle for the 
embedding $L_{n,e}\subset L_{m,e}$ coming from the first coordinates.
\end{lema}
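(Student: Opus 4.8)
The plan is to compute the normal bundle directly from the explicit models built in the previous section, using the join decomposition. First I would recall from~(\ref{parts}) the description of the left-hand piece $L$ of the decomposition~(\ref{union}) of $L_{m,e}$, now applied with $k=n$ and $j=m-n-1$: namely $L=(S^{2n+1}\times D^{2(m-n)})/(\mathbb{Z}/2^e)$, with the diagonal action. By Lemma~\ref{int1}, this $L$ is precisely the total space of the disc bundle $D((m-n)\eta_{n,e})$, since $(j+1)\eta_{k,e}=(m-n)\eta_{n,e}$ in the present indexing. The embedding $L_{n,e}\subset L_{m,e}$ coming from the first coordinates is, by construction, the inclusion of the zero section of this disc bundle (the locus $t=0$, i.e.\ $y=0$ in~(\ref{parts})), whose image is a regular neighborhood exactly $L$.

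Next I would invoke the standard fact that, for a smooth embedding $A\subset B$ with a closed regular (tubular) neighborhood $\mathcal{N}$ of $A$ in $B$ that is the total space of a disc bundle $D(\zeta)$ over $A$ with $A$ sitting as the zero section, the normal bundle of the embedding is canonically $\zeta$. Here $\mathcal{N}=L=D((m-n)\eta_{n,e})$ and $A=L_{n,e}$ is the zero section, so the normal bundle of $L_{n,e}\subset L_{m,e}$ is $(m-n)\eta_{n,e}$, as claimed. One should check the degenerate cases: for $m=n$ the bundle is $0$ (the empty/rank-zero bundle) and the statement is the tautology that $L_{n,e}\subset L_{n,e}$ has trivial normal bundle; for $n=0$ we are embedding a circle $L_{0,e}=S^1$, and the formula reads $m\eta_{0,e}$, consistent with the models in~(\ref{parts}).

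The only mildly delicate point — and where I would spend care rather than cite a black box — is verifying that the model for $L$ in~(\ref{parts}) really is a \emph{smooth} tubular neighborhood of $L_{n,e}$ in $L_{m,e}$, i.e.\ that the homeomorphism $\phi\colon S^{2k+1}\star S^{2j+1}\to S^{2(k+j+1)+1}$ of Section~\ref{newcomp} can be taken to be equivariantly smooth near $t=0$ so that the induced identification on orbit spaces respects smooth structures. This is essentially the observation already used (for $L\cap R$) in adapting~\cite[Lemma~3.1]{steer}: away from the other end of the join the map $(x,y,t)\mapsto(\sqrt{1-t}\,x,\sqrt{t}\,y)$ is smooth and $\mathbb{Z}/2^e$-equivariant, and the disc-bundle projection $(x,y)\mapsto x$ on $S^{2n+1}\times D^{2(m-n)}$ descends to the bundle projection $(m-n)\eta_{n,e}\to L_{n,e}$. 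Once this compatibility of smooth structures is in hand, the identification of the normal bundle is immediate from the zero-section description, completing the proof.
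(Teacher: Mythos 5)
Your proof is correct, but it takes a genuinely different route from the paper's. The paper's argument is purely bundle-theoretic: starting from the explicit model of $\tau_{m,e}$ as the quotient of $\{(x,y)\in S^{2m+1}\times\mathbb{C}^{m+1}: x\perp y\}$ by the diagonal $\mathbb{Z}/2^e$-action, it writes down the linear monomorphism $((x,y),(x,z))\mapsto(x,(y,z))$ from $\tau_{n,e}\oplus(m-n)\eta_{n,e}$ into $\tau_{m,e}$ over $L_{n,e}\subset L_{m,e}$, observes it is a fiberwise isomorphism onto $\tau_{m,e}|_{L_{n,e}}$, and concludes by the defining property of the normal bundle. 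No tubular neighborhoods or smooth-structure compatibility enter at all. Your argument instead routes through the join decomposition~(\ref{union})--(\ref{parts}) and Lemma~\ref{int1}, identifying $L$ as a closed tubular neighborhood of $L_{n,e}$ equal to the disc bundle $D((m-n)\eta_{n,e})$ with $L_{n,e}$ as zero section, and invoking the standard characterization of the normal bundle via tubular neighborhoods. This is also valid, and it recycles the machinery just set up for Lemma~\ref{int2}; the price is exactly the point you flag, namely checking that the homeomorphism $\phi$ respects smooth structures near $t=0$. That worry is resolvable (the parametrization $(x,w)\mapsto(\sqrt{1-|w|^2}\,x,w)$ of a neighborhood of $S^{2n+1}\times\{0\}$ in $S^{2m+1}$ is smooth and $\mathbb{Z}/2^e$-equivariant, so the quotient really is a smooth tubular neighborhood), but the paper's direct construction of the bundle monomorphism sidesteps it entirely, which is why it is the shorter proof. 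Your treatment of the degenerate cases $m=n$ and $n=0$ is appropriate and worth noting, since the join picture formally requires $j\geq0$.
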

\begin{proof}
Recall from the previous section that $(m-n)\eta_{n,e}$ is the
quotient space of $S^{2n+1}\times \mathbb{C}^{m-n}$ by the diagonal action
of $\mathbb{Z}/2^e$. Then the map $((x,y),(x,z))\mapsto (x,(y,z))$ produces
a linear monomorphism $\tau_{n,e}\oplus(m-n)\eta_{n,e}\hookrightarrow\tau_{m,e}$
over $L_{n,e}\subset L_{m,e}$ that identifies $\tau_{n,e}\oplus(m-n)\eta_{n,e}$
with the restriction of $\tau_{m,e}$ to $L_{n,e}$.
\end{proof}

Let $F(t)$ be the Hurwicz-Radon function giving the maximal number of
everywhere linearly independent vector fields on $S^t$.
The following result gives the basis for our obstruction theory approach
to Milgram's linear algebra input in~\cite{MM}. 

\begin{proposicion}\label{encaje}
Assume that the restriction to $L_{n,e}$ of the
stable normal bundle $\nu\colon L_{m,e}\to BO$ is represented by a 
$d$-dimensional bundle classified by the map $\nu'$ in the homotopy 
commutative diagram
\begin{equation}\label{diagram}
\begin{picture}(50,40)(0,25)
\put(0,50){$L_{n,e}$}
\put(80,50){$BO(d)$}
\put(0,-5){$L_{m,e}$}
\put(85,-5){$BO$}
\put(33,55){\vector(1,0){40}}
\put(49,59){\scriptsize$\nu'$}
\put(33,0){\vector(1,0){46}}
\put(53,4){\scriptsize$\nu$}
\put(100,42){\vector(0,-1){30}}
\put(10,38){\vector(0,-1){26}}
\put(13.1,38){\oval(6,7)[t]}
\end{picture}
\end{equation}

\vspace{1cm}\noindent
If $2m+d\geq4n+1$ and, in case of equality, $2(m-n)\leq F(2n+1)$,
then there is an embedding $(m-n)\eta_{n,e}\subset\mathbb{R}^{2m+d+1}$.
\end{proposicion}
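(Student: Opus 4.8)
The plan is to recast the claim as an embedding problem for a compact manifold with boundary. Let $W:=D((m-n)\eta_{n,e})$ be the total space of the normal disc bundle of the inclusion $L_{n,e}\subset L_{m,e}$ supplied by Lemma~\ref{nmlbl}. Being a tubular neighborhood, $W$ is a codimension-$0$ submanifold of $L_{m,e}$, so $\tau_W=\tau_{m,e}|_W$; and since $W$ deformation retracts onto its zero section $L_{n,e}$, the stable normal bundle of $W$ is carried, under this retraction, onto $\nu|_{L_{n,e}}$, which by the hypothesis encoded in diagram~(\ref{diagram}) is represented by the $d$-dimensional bundle classified by $\nu'$. Thus $W$ is a $(2m+1)$-dimensional manifold with non-empty boundary, homotopy equivalent to the $(2n+1)$-complex $L_{n,e}$, whose stable normal bundle admits a $d$-dimensional reduction. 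As the open total space of $(m-n)\eta_{n,e}$ is the interior of $W$, it is enough to embed $W$ in $\mathbb{R}^{2m+d+1}$.

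First I would feed this into Hirsch's immersion theorem: the $d$-dimensional reduction of the stable normal bundle yields an immersion $W\immto\mathbb{R}^{2m+d+1}$ with normal bundle the $\nu'$-bundle. I would then promote this immersion to an embedding via the Haefliger--Hirsch theory of embeddings in the metastable range. The relevant arithmetic is that the hypothesis $2m+d\ge 4n+1$ is exactly $\dim W+d=2m+1+d\ge 4n+2=2(2n+1)$, so the ambient dimension is at least twice the homotopy dimension of $W$; in this range the obstructions distinguishing regular homotopy classes of immersions from isotopy classes of embeddings---governed by the $\mathbb{Z}/2$-equivariant homotopy type of the deleted product of $W$, which is controlled by that of the $(2n+1)$-dimensional $L_{n,e}$---vanish, and the immersion may be deformed to an embedding. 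A few genuinely low-dimensional instances of the metastable machinery would need to be inspected directly.

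The crux is the borderline equality $2m+d=4n+1$, where $W^{2m+1}$ must be embedded in $\mathbb{R}^{4n+2}$ at the very edge of the metastable range and a single residual self-intersection obstruction survives. Here one exploits the bundle structure of $W$ over $L_{n,e}$: the fibre dimension $2(m-n)$ is exactly the ``excess'' $\dim W-\dim L_{n,e}$, and the surviving obstruction is a top-dimensional class that one kills precisely when $2(m-n)$ everywhere independent sections of the appropriate bundle are available. Pulling the section problem back along the universal cover $S^{2n+1}\to L_{n,e}$ reduces this to the existence of $2(m-n)$ linearly independent vector fields on $S^{2n+1}$, that is, to the stated inequality $2(m-n)\le F(2n+1)$. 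Pinning down that this Hurwicz--Radon bound is exactly what makes the residual obstruction vanish is the main work; everything else is bookkeeping with the bundle identifications of Section~\ref{newcomp} and the dimension count above.
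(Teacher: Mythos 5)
Your reduction of the claim to embedding the disc bundle $W=D((m-n)\eta_{n,e})$ is a reasonable reformulation, and the observation that an embedding of $W$ suffices is correct. But the argument then applies Haefliger--Hirsch metastable theory to $W$ itself, a $(2m+1)$-manifold, in ambient dimension $N=2m+d+1$. The metastable hypothesis for a $(2m+1)$-manifold is $2N\ge 3(2m+2)$, i.e.\ $d\ge m+2$, and this simply fails for many of the parameter choices used in the paper (e.g.\ $\mu=2$, $\ell=2$ gives $m=11$, $d\le 8$). Your attempt to rescue this by saying the relevant dimension is the \emph{homotopy} dimension of $W$ (namely $2n+1$) is the crux, and it is not justified: the $\mathbb{Z}/2$-equivariant deleted product of $W$ is \emph{not} equivariantly equivalent to that of $L_{n,e}$ (removing the diagonal does not commute with the deformation retraction onto the zero section --- already for $W$ a disc and $L_{n,e}$ a point, the deleted products differ drastically). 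So the claim that the Haefliger obstruction for $W$ is controlled by $L_{n,e}$ needs an argument you have not given.

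The paper avoids this entirely, and the repair is small: instead of embedding $W$ and then trying to argue by homotopy dimension, embed the \emph{base} $L_{n,e}$ directly. Restricting $\tau_{m,e}\oplus\nu\cong\underline{2m+1+M}$ to $L_{n,e}$ and cancelling trivial sections gives $\tau_{n,e}\oplus\nu''\cong\underline{2m+d+1}$ with $\nu''=(m-n)\eta_{n,e}\oplus\nu'$. Hirsch then supplies an immersion $L_{n,e}\immto\mathbb{R}^{2m+d+1}$ with normal bundle $\nu''$; since $L_{n,e}$ is a closed $(2n+1)$-manifold and $2m+d+1\ge 4n+2$, the strict inequality $2m+d>4n+1$ puts this in codimension $>\dim L_{n,e}$, where the immersion can be regularly homotoped to an embedding. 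A tubular neighborhood of the embedded $L_{n,e}$ is a copy of $D(\nu'')$, which contains $D((m-n)\eta_{n,e})$ as a subbundle, giving the desired embedding. The two formulations are in fact equivalent (an embedding of $W$ with normal bundle $\nu'$ restricts, along the zero section, to an embedding of $L_{n,e}$ with normal bundle $\nu''$, and conversely), but the bookkeeping --- and the metastable arithmetic --- only works cleanly in terms of $L_{n,e}$. Finally, your discussion of the borderline case $2m+d=4n+1$ gestures at the right circle of ideas (the Hurwitz--Radon bound on vector fields on $S^{2n+1}$), but is vague about the mechanism; the paper here simply invokes \cite[Theorem 2.2]{MM}, which handles precisely this edge case, and you should do the same rather than sketching a new argument for the ``residual self-intersection obstruction.''
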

\begin{proof}
Think of $\nu$ as an $M$-dimensional bundle for $M\gg0$. After
cancelling a few trivial sections, the $L_{n,e}$-restriction of 
the equality $\tau_{m,e}\oplus\nu=2m+1+M$ becomes 
$\tau_{n,e}\oplus\nu''=2m+d+1$, where $\nu''=(m-n)\eta_{n,e}\oplus\nu'$.
Pick an immersion $L_{n,e}\immto\mathbb{R}^{2m+d+1}$ with normal bundle
$\nu''$. If $2m+d>4n+1$, this immersion is regularly homotopic to
an embedding, an open tubular neighborhood of which can be identified
with $\nu''$ (producing, in particular, the required embedding for
$(m-n)\eta_{n,e}$). The case $2m+d=4n+1$ is treated in a similar way, except
that the regular deformation argument is replaced by~\cite[Theorem~2.2]{MM}.
\end{proof}

We next show how to reduce Lemma~\ref{feeding} to a
particular instance of Proposition~\ref{encaje}.
The reader will readily verify that, for $\mu$, $\ell$, and $e$ as 
in Lemma~\ref{feeding}, the numerical requirements in 
Proposition~\ref{encaje} are satisfied with $n=2^\mu\ell-1$, 
$m=2^\mu(\ell+1)-1$, and $d=2^{\mu+1}(\ell-1)-\lambda$, where 
\begin{equation}\label{lambda}
\lambda=\begin{cases}
1, & \mbox{when $\alpha(\ell)\ge2$ and $\ell\equiv 0 \mbox{ mod } 2$}; \\
0, & \mbox{otherwise}.
\end{cases}
\end{equation}
And, under these conditions, Lemma~\ref{feeding} becomes the 
conclusion of Proposition~\ref{encaje}. Therefore, settling the existence 
of a map $\nu'$ as in diagram~(\ref{diagram}) is the only missing 
task in order to complete the proof of Lemma~\ref{feeding}.
Moreover, since the stable normal bundle of $L_{m,e}$ is well-known to be
the $-(m+1)$ Whitney multiple  of the pull-back of the Hopf 
bundle $H_m$ (denoted simply by $H$ if no confusion arises) 
under the canonical projection $L_{m,e}\to\mathbb{C}\P^m$, 
our real goal becomes showing the existence of the 
dashed homotopy lifting in the following diagram. 
We stress that the hypothesis to this aim, namely 
$\mu$, $\ell$, and $e$ being as in Lemma~\ref{feeding}, with
$\lambda$ given by~(\ref{lambda}),
will be in force throughout the rest of the section.
\begin{equation}\label{lift}
\begin{picture}(50,30)(-49,35)
\put(0,50){$BO(2^{\mu+1}(\ell-1)-\lambda)$}
\put(55,0){$BO$}
\put(-93,0){$\mathbb{C}\P^{2^\mu\cdot\ell-1}$}
\put(-185,0){$L_{2^\mu\cdot\ell-1,e}$}
\put(-128,5){\vector(1,0){28}}
\put(-31,5){\vector(1,0){80}}
\put(-27,-7){\scriptsize $-2^\mu(\ell+1)H$}
\put(67,40){\vector(0,-1){23}}
\multiput(-147,15)(28,7){5}{\line(4,1){20}}
\put(-27,45.1){\vector(4,1){15}}
\end{picture}
\end{equation}

\vspace{.8cm}
\begin{nota}\label{aclaracion}\em 
The following considerations refer to the
case $\lambda=1$ in~(\ref{lambda}).
While the assumed parity of $\ell$ is an important
ingredient in the 
above derivation of Lemma~\ref{feeding} from Proposition~\ref{encaje},
no use is yet made 
of the condition $\alpha(\ell)\ge 2$. The latter 
requirement will shortly be identified as 
the relevant hypothesis for the corresponding (i.e., $\lambda=1$) 
construction of
the lifting in~(\ref{lift}). In fact, it will be convenient to 
construct a slightly stronger set of liftings (for
$\mu$, $\ell$, and $e$), namely, one where  
the restriction $\ell\equiv 0$ mod 2 is removed from the case 
$\lambda=1$ of~(\ref{lambda}).
\end{nota}

We next take care of a couple of easy cases of~(\ref{lift}).

\begin{proof}[{\bf Case} $\ell=1$] 
(\ref{lambda}) gives $\lambda=0$, so we need to show the
homotopy triviality of the horizontal composite in~(\ref{lift}). This is
standard for $e=1$, whereas the case $e\geq2$ follows from the 
$\widetilde{KO}$-calculations in~\cite{KS} (this is the point where we
require the hypothesis $e\leq2$ for $\mu=2$). Therefore we assume 
$\ell\geq 2$ from now on---which will allow us to get the lifting 
in~(\ref{lift}) even at the level of the complex projective space.
\end{proof}

\begin{proof}[{\bf Case} $\lambda=1$] 
(The cases with an even $\ell$ correspond to the `stronger'
liftings leading to the improvements referred to in Remark~\ref{sharpen}.)
For $\mu=1$, the existence of the required lifting was actually 
established in~\cite{naka2}
provided $\ell\equiv0$ mod 2---see the proof of 
Theorem~1 on pages 172--173 of that
paper. We  leave for the reader the verification that
the required lifting for $\mu=1$ (and any $\ell$)
follows from an argument
similar to the one we now describe for
the situation with $\mu=2$. The lifting problem we want to solve is

\begin{equation}\label{l1m2} 
\begin{picture}(50,30)(0,35)
\put(10,50){$BO(8(\ell-1)-1)$}
\put(55,0){$BO$}
\put(-85,0){$\mathbb{C}\P^{4\ell-1}$}
\put(-31,5){\vector(1,0){80}}
\put(-24,-7){\scriptsize $-4(\ell+1)H$}
\put(67,40){\vector(0,-1){23}}
\multiput(-55,20)(22,11){3}{\line(2,1){15}}
\put(-9,43){\vector(2,1){15}}
\end{picture}
\end{equation}

\vspace{1cm}\noindent
and, interpreted as an upper bound for the
geometric dimension of the horizontal map in~(\ref{l1m2}), 
its solution follows as a consequence of~\cite[Theorem~1.3(c)]{DM2}.
Indeed, in that result take 
\begin{equation}\label{otranot}
\epsilon=3, \;\;d=5, \;\;m=2\ell-3, \mbox{ \ and \ } \,p=2^N-4(\ell+1), 
\end{equation}
where $N\gg0$ (in fact, the summand $2^N$ would have to be 
replaced by any multiple of the order of the Hopf bundle over 
$\mathbb{C}\P^{4\ell-1}$, but this is immaterial for the $2$-primary
calculations below).
Then, with the notation as in~(\ref{otranot}),
the conditions implying the lifting in~(\ref{l1m2}) are

\begin{itemize}
\item $\nu\binom{p}{2m+2}\geq 1$;
\item $\nu\binom{p}{2m+4}\geq 3$;
\item $2m\geq d-\epsilon$.
\end{itemize}

\noindent
Table~1.9 in~\cite{DM2} imposes no further `additional conditions'. The third
inequality is immediate, while the first two are straightforward verifications
using the identities $\nu\binom{a}{b}=\alpha(b)+\alpha(a-b)-\alpha(a)$, 
$\;\alpha(a-1)=\alpha(a)-1+\nu(a)$, and $\alpha(2^N-a)=N-\alpha(a-1)$ for 
$N\gg 0$. 
For instance, the second inequality is verified as follows:
\begin{eqnarray*}
\lefteqn{\nu\binom{p}{2m+4} \;\; = \;\; \nu\binom{2^N-4(\ell+1)}{4\ell-2}} 
\\[1mm]
 & = & \alpha(4\ell-2) + \alpha(2^N-8\ell-2) - \alpha(2^N-4(\ell+1)) \\ 
 & = & \alpha(\ell-1) + 1 + N - \alpha(8\ell+1) - (N-\alpha(4\ell+3)) \\ 
 & = & \alpha(\ell-1) + 1 - \alpha(\ell) - 1 + \alpha(\ell) +2 \;\; = \;\; 
\alpha(\ell-1)+2 \\ 
 & = & \alpha(\ell) - 1 +\nu(\ell) +2 \;\; \geq \;\; 2-1+2 \;\; = \;\; 3,
\end{eqnarray*}
where the inequality in the previous line uses the 
hypothesis $\alpha(\ell) \ge 2$. 
In a similar way one checks that $\nu\binom{p}{2m+2}=\alpha(\ell)-1$.
\end{proof}

\begin{nota}\label{maslift}{\em
Stronger liftings than~(\ref{l1m2}) can be deduced from the Davis-Mahowald
technique~\cite{DM2} for certain odd values of $\ell$. 
For instance, when $\ell=2^a u +3$ with odd $u$ and $a\ge 2$, 
we get a lifting to $BO(8(\ell-1)-2)$ 
if $u=1$, and to $BO(8(\ell-1)-3)$ if $u>1$.
These follow from~\cite[Theorem~1.3(c)]{DM2} by replacing 
$\epsilon = 3$ in~(\ref{otranot}) with $\epsilon = 2$ 
and $\epsilon = 1$, respectively.
}\end{nota}

It remains to consider the lifting~(\ref{lift}) for  $\ell\geq 2$ and
$\lambda=0$. According to~(\ref{lambda})---as modified in 
Remark~\ref{aclaracion}---, this means in fact
\begin{equation}\label{tom}
\ell=2^\varrho, \mbox{ \ \ with \ } \varrho>0.
\end{equation}
For this, we first observe 
that the horizontal composite in~(\ref{lift}) factors through the 
corresponding quaternionic projective space as 
$$
L_{2^\mu\ell-1,e}\to \mathbb{C}\P^{2^\mu\ell-1}\to \mathbb{H}
\P^{2^{\mu-1}\ell-1}\to BO.
$$
(see~\cite[Lemma~5.4]{sanderson}).
Since the case with $\mu=2$ is far more complicated, we first 
dispose of the (rather elementary) situation for $\mu=1$. 
The point is that the fiber of $BO(4\ell-4)\to BO$ is 
($4\ell-5$)-connected so
that, at the level of the quaternionic projective space, the obstructions for 
the required lifting lie in trivial groups.

\medskip
We have saved the most interesting case for last, namely, 
the one having $\lambda=0$ and $\mu=2$, with $\ell$ as in~(\ref{tom}).
Thus, the proof of Lemma~\ref{feeding} will be complete
once we solve the instance of~(\ref{lift}) 
summarized by the following result (where we have set $j=\ell-1$,
an odd number in view of~(\ref{tom})).

\begin{proposicion}\label{fin}
The homotopy lifting problem

\begin{picture}(50,60)(-273,18)
\put(43,50){$BO(8j)$}
\put(55,0){$BO$}
\put(-55,0){$\mathbb{H}\P^{2j+1}$}
\put(-144,0){$\mathbb{C}\P^{4j+3}$}
\put(-90,5){\vector(1,0){28}}
\put(-1,5){\vector(1,0){50}}
\put(67,40){\vector(0,-1){23}}
\multiput(-100,20)(28,7){5}{\line(4,1){20}}
\put(20,50.1){\vector(4,1){15}}
\end{picture}

\vspace{1cm}\noindent
can be solved for any positive odd 
\footnote{We have just indicated stronger liftings for an even $j$.} 
integer $j$, where the horizontal composite classifies the 
realification of the Whitney multiple $-4(j+2)H_{4j+3}$. 
\end{proposicion}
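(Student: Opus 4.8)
The plan is to reinterpret the lifting problem as a geometric dimension statement over $\mathbb{H}\P^{2j+1}$ and settle it by a modified Postnikov tower / obstruction theory argument. First observe that, after pulling the complex Hopf bundle $H$ back to $\mathbb{H}\P^{2j+1}$, the bundle $H\oplus\overline H$ is the (underlying complex bundle of the) quaternionic tautological line bundle $\xi_{\mathbb H}$, whose realification is the canonical real $4$-plane bundle $\xi$; since realification identifies $rH$ with $r\overline H$, the realified Whitney multiple $-4(j+2)H$ becomes, as a stable bundle pulled back from $\mathbb{H}\P^{2j+1}$, exactly $-2(j+2)\xi=r(-2(j+2)\xi_{\mathbb H})$. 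Thus the horizontal composite factors as $\mathbb{H}\P^{2j+1}\xrightarrow{-2(j+2)\xi_{\mathbb H}}B\mathrm{Sp}\xrightarrow{r}BO$, and Proposition~\ref{fin} asserts precisely that this classifying map lifts through $BO(8j)\to BO$, i.e.\ that $-2(j+2)\xi$ has geometric dimension at most $8j=\dim_{\mathbb R}\mathbb{H}\P^{2j+1}-4$. A lift over $\mathbb{H}\P^{2j+1}$ pulls back to the required lift over $\mathbb{C}\P^{4j+3}$, so it suffices to work over the quaternionic base.

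I would run the obstruction theory against the cell structure of $\mathbb{H}\P^{2j+1}$, which has a single cell in each dimension $0,4,8,\dots,8j,8j+4$. The fiber of $BO(8j)\to BO$ is the infinite Stiefel manifold $V^\infty_{8j}=O/O(8j)$, which is $(8j-1)$-connected with $\pi_{8j}(V^\infty_{8j})\cong\mathbb Z$ (as $8j$ is even). Hence a lift exists and is unique up to homotopy over the $(8j-4)$-skeleton $\mathbb{H}\P^{2j-1}$, because all relevant obstruction and difference groups vanish for dimensional reasons; it then extends over the $8j$-cell since the obstruction there lies in $H^{8j}(\mathbb{H}\P^{2j+1};\pi_{8j-1}(V^\infty_{8j}))=0$. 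The only substantive obstruction is the one to extending over the top $(8j+4)$-cell, an element $o\in H^{8j+4}(\mathbb{H}\P^{2j+1};\pi_{8j+3}(V^\infty_{8j}))\cong\pi_{8j+3}(V^\infty_{8j})$, a finite group (here $\mathbb{H}\P^{2j+1}$ is simply connected, so there is no twisting and the universal coefficient contribution comes only from $H_{8j+4}=\mathbb Z$). Moreover $o$ is well defined only modulo the indeterminacy obtained by re-choosing the lift over the $8j$-cell, which is the image of a $k$-invariant homomorphism $\kappa\colon\mathbb Z=H^{8j}(\mathbb{H}\P^{2j+1};\pi_{8j}(V^\infty_{8j}))\to\pi_{8j+3}(V^\infty_{8j})$. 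Thus the proposition is equivalent to the assertion $o\in\operatorname{im}\kappa$.

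The crux is therefore a three-part computation, and this is where the work lies. (i) Identify $\pi_{8j+3}(V^\infty_{8j})$: for $d=8j$ even this group is assembled, via the sphere fibrations $S^d\to V^\infty_d\to V^\infty_{d+1}$, out of the stable stems $\pi^s_1,\pi^s_2,\pi^s_3=\mathbb Z/24$, and its precise isomorphism type depends only on $j$ modulo a small power of $2$ (standard unstable Stiefel-manifold homotopy). (ii) Compute $o$: because the classifying map factors through $B\mathrm{Sp}$, most potential contributions from the intermediate stages of the Postnikov tower drop out, and $o$ is read off from the $KO$-theoretic $e$-invariant — equivalently the relevant Pontryagin number — of $-4(j+2)H$ on the top cell; this is a $2$-adic binomial-coefficient calculation of exactly the same flavour as the $\nu\binom{p}{2m+2}$-type identities used for the $\lambda=1$ case of~(\ref{l1m2}), via $\nu\binom{a}{b}=\alpha(b)+\alpha(a-b)-\alpha(a)$ and $\alpha(2^N-a)=N-\alpha(a-1)$. (iii) Compute $\operatorname{im}\kappa$: this is multiplication into $\pi_{8j+3}(V^\infty_{8j})$ by an explicit integer, namely the Hopf-type invariant of the $(8j+4)$-attaching map of $\mathbb{H}\P^{2j+1}$ composed with the top $k$-invariant, controlled by the relevant James number of $V^\infty_{8j}$. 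With (i)--(iii) in hand one verifies directly that $o\in\operatorname{im}\kappa$; the key feature is that, unlike in the $\lambda=1$ situation, no congruence on $j$ beyond oddness is required — which is exactly why the divisibility hypotheses that earlier forced $\lambda=1$ are absent here and why the statement is clean for every positive odd $j$ rather than only the Mersenne values $j=2^\varrho-1$ arising from~(\ref{tom}).

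The main obstacle I anticipate is step (iii) together with the final numerical verification: pinning down the top $k$-invariant of the modified Postnikov tower for $BO(8j)\to BO$ in this range precisely enough to determine $\operatorname{im}\kappa$, and then carrying out the $2$-primary (and $3$-primary) bookkeeping so that it covers all odd $j$ uniformly. Steps (i) and (ii), and the complex-to-quaternionic coefficient translation using Sanderson's factorization, are essentially routine but require care. (The stronger liftings for even $j$ mentioned just before the proposition — a lift to $BO(8j-1)$ or $BO(8j-2)$ — would follow from the same analysis, since for even $j$ the obstruction class admits more room; but this refinement is not needed for Lemma~\ref{feeding}.)
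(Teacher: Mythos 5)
Your reduction to $\mathbb{H}\P^{2j+1}$ is a genuine gap. You claim that ``a lift over $\mathbb{H}\P^{2j+1}$ pulls back to the required lift over $\mathbb{C}\P^{4j+3}$, so it suffices to work over the quaternionic base.'' The implication is true, but the strategy founders: the paper's argument crucially exploits cohomology of $\mathbb{C}\P^{4j+3}$ in degrees congruent to $2\bmod 4$, which simply does not exist over $\mathbb{H}\P^{2j+1}$. Concretely, one of the $k^2$-invariants in the modified Postnikov tower (the one denoted $k^2_{8j+4}$ in the paper's equation~(\ref{G})) must be killed by a primary indeterminacy coming from the nontrivial action of $\Sq^2\colon H^{8j+2}(\mathbb{C}\P^{4j+3};\mathbb{Z}/2)\to H^{8j+4}(\mathbb{C}\P^{4j+3};\mathbb{Z}/2)$; since $H^{8j+2}(\mathbb{H}\P^{2j+1};\mathbb{Z}/2)=0$, no such indeterminacy is available over the quaternionic base, and there is no reason to expect that this $k^2$-invariant vanishes there on the nose. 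The paper does factor $g\colon\mathbb{H}\P^{2j+1}\to B\Spin$ and does lift $g$ two stages up the tower over $\mathbb{H}\P^{2j+1}$, but the remaining two stages are handled explicitly over $\mathbb{C}\P^{4j+3}$. In effect you are attempting to prove a strictly stronger statement that is very likely false; at minimum it is unsupported.

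Beyond that, your steps (i)--(iii) are precisely where the content of the proposition lives, and they are not carried out. Computing $\pi_{8j+3}(V^\infty_{8j})$ and, much more seriously, the homomorphism $\kappa$ (the image of the indeterminacy under the relevant Whitehead-product/$k$-invariant pairing) is exactly what the modified Postnikov tower organizes: the formulae (A)--(I) in the paper, together with the splitting $\P^{8j+6}_{8j}\simeq S^{8j}\vee\P^{8j+6}_{8j+1}$, encode the Adams chart and those $k$-invariants. Your classical formulation does not bypass any of this; it repackages it. Moreover, the mechanism by which the hypothesis ``$j$ odd'' enters is a specific secondary cohomology operation $\Phi$ associated to the Adem-type relation $(\Sq^2\Sq^1)\Sq^2+\Sq^1\Sq^4=0$, evaluated on $y^{4j}\in H^{8j}(\mathbb{C}\P^{4j+3};\mathbb{Z}/2)$ via the Gitler--Mahowald--Milgram theorem, giving $\Phi(y^{4j})=y^{4j+2}$ precisely when $j$ is odd. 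Nothing in your outline visibly produces this dependence on the parity of $j$; the remark that the $\lambda=1$ congruences are absent ``because the obstruction class admits more room'' is circular without the actual calculation. So the proposal correctly identifies the problem as a geometric-dimension lifting question, chooses a workable class of tools, and gets the bookkeeping set-up roughly right, but the reduction to $\mathbb{H}\P^{2j+1}$ is unjustified and the core computations are only gestured at.
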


We have already said that the instance of~(\ref{lift}) represented by
Proposition~\ref{fin} (as well as the previous instances of~(\ref{lift}) we 
have already solved) is proved 
in~\cite{MM} for real projective spaces by means of direct linear 
algebra constructions. The lack of such technique for 
higher torsion lens spaces led us to analyze alternative approaches
to Proposition~\ref{fin} that could not only explain in a 
simple form the 
corresponding $e=1$ instance in~(\ref{lift}), 
but that would allow us to obtain a suitable generalization to 
higher torsion lens spaces. The tool 
that proved to simultaneously solve these problems is Mahowald's theory of 
modified Postnikov towers (MPT's)~\cite{mah64}, as refined in~\cite{GM66}
---which is freely used from this point on.
Indeed, such an analysis is actually very simple in the case of real projective
spaces as the whole lifting can be sorted out through easy primary
indeterminacy arguments. This gave a first indication of the viability of the 
method in the general case. But its complete success comes from a careful
secondary indeterminacy analysis---the heart of the proof below---that
leads to the required lifting~(\ref{lift}) 
for every torsion lens space (in the form of Proposition~\ref{fin}).

\medskip
The remainder of this section is devoted to the construction of the 
dashed homotopy lifting in Proposition~\ref{fin}. 
The proof is a bit lengthy, so we divide it into three main steps.
The first two work for any (even or odd) $j$; 
the hypothesis of having an odd $j$ will be applied only near 
the end of the third step, in order to evaluate a certain nontrivial
secondary cohomology operation.

\smallskip 
Step 1. {\em Description of the MPT we use}.
Since $-4(j+2)H_{4j+3}$ has trivial first and second Stiefel-Whitney
classes, the classifying map of this bundle
can be further factored through $B\Spin$. 
We denote the maps in the resulting factorization as 
\begin{equation}\label{factorizacion}
\com\proju {4j+3}\stackrel{f}\rightarrow \cua \proju{2j+1}
\stackrel{g}\rightarrow B\Spin\stackrel{h}\rightarrow BO,
\end{equation}
and note that it suffices to lift the composite $gf$ to $B\Spin(8j)$. 
In the range under consideration the fiber of 
$B\Spin(8j)\rightarrow B\Spin$ is the stunted real projective space 
$\P^{8j+6}_{8j}$---the quotient of $\P^{8j+6}$ with $\P^{8j-1}$ collapsed
to a point. This is the Thom space of the 
bundle $8j\xi_{6}$ (notation as in Section~\ref{mainproof}), which is
a stable coreducible complex 
by standard $K$-theory considerations. Thus $\P^{8j+6}_{8j}$ splits 
as $S^{8j}\vee\P^{8j+6}_{8j+1}$,
with homotopy groups through dimension $8j+5$  given 
in the Adams chart (see \cite[Tables 8.1, 8.2, and 8.9]{meta})

\begin{picture}(100,128)(-45,-25)
\def\elt{\circle*{4}}
\def\mp{\multiput}
\put(100,0){\line(1,0){245}} 
\mp(120,0)(0,20){4}{\elt} 
\put(120,0){\vector(0,1){90}}
\mp(160,0)(0,20){2}{\elt} 
\mp(200,20)(0,20){2}{\elt} 
\mp(243,20)(0,20){3}{\elt} 
\mp(237,23)(0,20){2}{\elt} 
\mp(240,0)(0,20){1}{\elt} 
\mp(280,20)(0,20){1}{\elt} 
\put(114,-15){\scriptsize $8j$} 
\put(200,-2){\line(0,1){4}} 
\put(185,-15){\scriptsize $8j{+}2$} 
\put(265,-14){\scriptsize $8j{+}4$} 
\mp(280,-2)(40,0){2}{\line(0,1){4}} 
\put(243,20){\line(0,1){40}} 
\put(237,23){\line(0,1){20}} 
\put(120,0){\line(2,1){121}}
\put(160,0){\line(2,1){40}}
\qbezier(200,20)(220,32)(237,42)
\qbezier(240,0)(240,0)(237,23)
\end{picture}

\noindent 
where any potentially nontrivial Adams differential is 
ruled out by the splitting. 
The condition $j\geq1$ assures that we are in the range for 
a $(8j+6)$-MPT to exist; it takes the form

\begin{equation}\label{tower}
\begin{picture}(10,130)(170,126)
\put(20,250){$B\Spin(8j)$}
\put(55,200){$E_3$} 
\put(55,150){$E_2$} 
\put(55,100){$E_1$}
\put(32,50){$B\Spin$} 
\put(60,243){\vector(0,-1){27}}
\put(60,193){\vector(0,-1){27}} 
\put(60,143){\vector(0,-1){27}}
\put(60,93){\vector(0,-1){27}} 
\put(42,230){$p_4$}
\put(42,180){$p_3$} 
\put(42,130){$p_2$} 
\put(42,80){$p_1$}
\put(77,205){\vector(1,0){25}} 
\put(107,200){$K'_{8j+4}$}
\put(85,207){$k^3$} 
\put(77,155){\vector(1,0){25}}
\put(107,150){$K_{8j+3}\times K_{8j+4}\times K'_{8j+4}$} 
\put(85,157){$k^2$}
\put(77,105){\vector(1,0){25}} 
\put(107,100){$K_{8j+2}\times K_{8j+3}\times K_{8j+4}\times K'_{8j+4}
\times K_{8j+5}$}
\put(85,107){$k^1$} 
\put(80,55){\vector(1,0){23}}
\put(107,50){$K(\bz,8j+1)\times K_{8j+2}\times K_{8j+4}$} 
\put(85,57){$k^0$}
\end{picture}
\end{equation}

\vspace{2.3cm}
\noindent where $K_{m}$ stands for the Eilenberg-Mac$\,$Lane 
space $K(\bz/2,m)$. We let $K(i)$ and $\mu_i: K(i)\times E_i\rightarrow E_i$ 
stand for the fiber and the action, respectively, in the principal 
fibration $p_i$. To conclude the MPT setting, we remark that
a standard calculation gives the following characterizations for
the $k$-invariants:
\begin{eqnarray}
\mu^*_1(k^1_{8j+2}) &=& 1 \otimes k^1_{8j+2} + \sq 2 
\iota_{8j} \otimes 1, \label{A} \\
\mu^*_1(k^1_{8j+3}) &=& 1 \otimes k^1_{8j+3} + \sq 2 \iota_{8j+1}
\otimes 1, \label{B} \\ 
\mu^*_1(k^1_{8j+4}) &=& 1
\otimes k^1_{8j+4} + \sq 1 \iota_{8j+3} \otimes 1 + \sq 2 \sq 1
\iota_{8j+1} \otimes 1, \label{C} \\ 
\mu^*_1(k'^{\, 1}_{8j+4})
&=& 1 \otimes k'^{\, 1}_{8j+4} + \sq 4 \iota_{8j} \otimes 1 + 
\iota_{8j} \otimes w_4, \label{D} \\ 
\mu^*_1(k^1_{8j+5}) &=& 1 \otimes k^1_{8j+5} +
\sq 4 \iota_{8j+1} \otimes 1 + \iota_{8j+1} \otimes w_4, \label{E} \\
\mu^*_2(k^2_{8j+3}) &=& 1 \otimes k^2_{8j+3} + \sq 2 
\iota_{8j+1} \otimes 1, \label{F} \\ 
\mu^*_2(k^2_{8j+4}) &=& 1 \otimes k^2_{8j+4} + \sq 1 
\iota_{8j+3} \otimes 1 + \sq 2
\iota_{8j+2} \otimes 1, \label{G} \\ 
\mu^*_2(k'^{\,2}_{8j+4}) &=& 1 \otimes k'^{\,2}_{8j+4} + \sq 1 
\iota'_{8j+3} \otimes 1 + \sq 2\sq 1
\iota_{8j+1} \otimes 1, \label{H} \\ 
\mu^*_3(k'^{\,3}_{8j+4}) &=& 1 \otimes k'^{\,3}_{8j+4} + \sq 1
\iota'_{8j+3} \otimes 1 + \sq 2 \iota_{8j+2} \otimes 1. \label{I} 
\end{eqnarray}
Here $\iota_t$ stands for the relevant fundamental class in 
the various fibers $K(i)$
(note that we have systematically primed $k$-invariants, Eilenberg-Mac$\,$Lane 
spaces, and  fundamental classes coming from the 
three classes in the right hand tower of height 3 in the chart above).

\smallskip
Step 2. {\em Basic (primary) MPT analysis}.
Since the binomial coefficient
$\binom{-2j-4}{2j+1}$ is even, $k^0_{8j+4}$ is trivial over $\cua \P^{2j+1}$; 
the other two $k^0$-invariants are also trivial over $\cua \P^{2j+1}$
by dimensional reasons. Therefore $g$ lifts in~(\ref{tower}) to a map 
$\ell_1: \cua \P^{2j+1}\to E_1$
whose only possibly nontrivial $k^1$-invariants are $k^1_{8j+4}$ and 
$k'^{\, 1}_{8j+4}$. 
We now show that these two $k^1$-invariants map trivially.

\smallskip
To deal with $\ell_1^*(k'^{\, 1}_{8j+4})$ we first note that 
extending the Adams chart through dimension $8j+7$ extends 
the $(8j+6)$-MPT to an $(8j+8)$-MPT with only one extra 
$k^0$-invariant, $k^0_{8j+8}$. Since 
$\binom{-2j-4}{2j+2}$ is even, $k^0_{8j+8}$ is trivial over 
$\cua \P^{2j+2}$ and, thus, the lifting $\ell_1: \cua \P^{2j+1}
\to E_1$ extends to a lifting 
$\cua \P^{2j+2}\to E_1$. In this extended MPT, there 
is a $k^2$-invariant, $k^2_{8j+7}$, defined by the relation 
$$(\sq 4 + w_4)k'^{\, 1}_{8j+4} + (\sq 6 + w_6)k^1_{8j+2} = 0$$ in 
$E_1$. This implies $(\sq 4 + w_4)\ell_1^*(k'^{\, 1}_{8j+4}) = 0$. 
Thus, assuming $\ell_1^*(k'^{\, 1}_{8j+4})$ is the nontrivial class 
in $H^{8j+4}(\cua \P^{2j+2};\mathbb{Z}/2)$ yields a 
contradiction since $\sq 4$ of this class is nontrivial, 
but $w_4 = 0$ for $g$.

\smallskip
To deal with $\ell_1^*(k^1_{8j+4})$ we need the fact that $g$ lifts to 
$B\Spin(8j+1)$. This follows from a straightforward application 
of~\cite[Theorem~1.3(b)]{dama75}---in a similar way to our argument
for~(\ref{l1m2}), but this time with quaternions
instead of complex numbers. With this information at hand, 
we consider the following diagram of MPT's, where the left hand tower 
is~(\ref{tower}), the right hand tower is the MPT for 
$B\Spin(8j+1)\to B\Spin$, and the map of MPT's is induced by 
the canonical map $j\colon B\Spin(8j)\to B\Spin(8j+1)$. 

\begin{equation}\label{diagr}
\begin{picture}(50,110)(40,90)
\def\elt{\circle*{1}}
\def\mp{\multiput}
\mp(7,145)(0,5){3}{\elt} 
\mp(107,145)(0,5){3}{\elt} 
\put(-20,0){$B\Spin$}
\put(100,0){$B\Spin$}
\put(0,50){${E_1}$}
\put(100,50){$E'_1$}
\put(0,100){${E_2}$}
\put(100,100){$E'_2$}
\mp(7,40)(100,0){2}{\vector(0,-1){20}}
\mp(7,90)(100,0){2}{\vector(0,-1){20}}
\mp(7,140)(100,0){2}{\vector(0,-1){20}}
\mp(7,180)(100,0){2}{\vector(0,-1){20}}
\put(-30,190){\small ${B\Spin}(8j)$}
\put(70,190){\small ${B\Spin}(8j+1)$}
\mp(30,6)(100,0){1}{\line(1,0){60}}
\mp(30,3)(100,0){1}{\line(1,0){60}}
\mp(30,55)(100,0){1}{\vector(1,0){60}}
\mp(30,105)(100,0){1}{\vector(1,0){60}}
\put(47,195){\vector(1,0){19}}
\put(52,60){\scriptsize $j_1$}
\put(52,200){\scriptsize $j$}
\put(52,110){\scriptsize $j_2$}
\end{picture}
\end{equation}

\vspace{2.4cm}\noindent 
Moreover, the effect in homotopy (up to dimension $8j+3$) 
of the mapping between fibers can be read in the 
Adams charts (see~\cite[Tables~8.1--8.2 for even $j$, and 8.9--8.10
for odd $j$]{meta})

\begin{picture}(10,110)(-50,-43)
\setlength{\unitlength}{.17mm}
\def\elt{\circle*{5.2}}
\def\mp{\multiput}
\put(270,-40){{\thicklines\vector(1,0){150}}}
\put(0,0)
{\begin{picture}(0,0)
\put(100,0){\line(1,0){160}} 
\mp(120,0)(0,20){4}{\elt} 
\put(120,0){\vector(0,1){90}}
\mp(160,0)(0,20){2}{\elt} 
\mp(200,20)(0,20){2}{\elt} 
\mp(244,20)(0,20){3}{\elt} 
\mp(237,23)(0,20){2}{\elt} 
\mp(240,0)(0,20){1}{\elt} 
\put(114,-15){\scriptsize $8j$}
\put(200,-2){\line(0,1){4}} 
\put(176,-15){\scriptsize $8j{+}2$} 
\put(243,20){\line(0,1){40}} 
\put(237,23){\line(0,1){20}} 
\put(120,0){\line(2,1){121}}
\put(160,0){\line(2,1){40}}
\qbezier(200,20)(220,32)(237,42)
\qbezier(240,0)(240,0)(237,23)
\put(180,-50){$\P_{8j}$}
\end{picture}}
\put(270,0)
{\begin{picture}(0,0)
\put(100,0){\line(1,0){160}} 
\mp(160,0)(0,20){1}{\elt} 
\mp(200,20)(0,20){1}{\elt} 
\put(114,-15){\scriptsize $8j$} 
\put(200,-2){\line(0,1){4}} 
\put(176,-15){\scriptsize $8j{+}2$} 
\put(240,0){\line(0,1){40}} 
\put(160,0){\line(2,1){80}}
\put(180,-50){$\P_{8j+1}$}
\mp(240,0)(0,20){3}{\elt}
\mp(120,-2)(40,0){1}{\line(0,1){4}} 
\end{picture}}
\end{picture}

\noindent
In particular, the $(8j+4)$-dimensional $k^1$-invariant on 
the right hand side 
in~(\ref{diagr}) maps into $k^1_{8j+4}$ in~(\ref{tower}).
Now, since $g$ lifts to $B\Spin(8j+1)$ the composite $j_1 
\ell_1$ has trivial $k^1$-invariants (with trivial indeterminacies). 
It follows that $\ell_1^*({k}^1_{8j+4})=0$.

\smallskip
We have proved:
\begin{lema}\label{provisional}
The map $g$ lifts in~(\ref{tower}) to a map $\ell_2: \cua 
\P^{2j+1} \rightarrow E_2$.
\end{lema}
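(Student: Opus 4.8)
The plan is to continue the primary MPT analysis exactly one further level, showing that the lifting $\ell_1\colon\cua\P^{2j+1}\to E_1$ produced at the end of Step~2 can itself be pushed up the tower~(\ref{tower}) past the fibration $p_2$. By construction, $\ell_1$ already kills the $k^1$-invariants $k^1_{8j+4}$ and $k'^{\,1}_{8j+4}$ (the only ones that were not forced to vanish for dimensional or parity reasons), and the remaining three ``low'' $k^1$-invariants $k^1_{8j+2},k^1_{8j+3},k^1_{8j+5}$ vanish over $\cua\P^{2j+1}$ because $H^q(\cua\P^{2j+1};\bz/2)=0$ unless $q\equiv0\pmod4$. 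Thus a lift $\ell_1$ into $E_1$ exists; what must be checked is that we may \emph{choose} $\ell_1$ so that the obstruction to lifting it across $p_2$—which lives in $H^{8j+3}\oplus H^{8j+4}\oplus H^{8j+4}$ of $\cua\P^{2j+1}$, via the three $k^2$-invariants $k^2_{8j+3}$, $k^2_{8j+4}$, $k'^{\,2}_{8j+4}$ appearing in~(\ref{tower})—vanishes. The class $k^2_{8j+3}$ is zero over $\cua\P^{2j+1}$ by dimension. For the two $(8j+4)$-dimensional obstructions, the strategy is to exploit the indeterminacy: the action formulas~(\ref{F}),~(\ref{G}),~(\ref{H}) show how changing the lift $\ell_1$ by a map into the fiber $K(1)$ alters $\ell_1^*(k^2_\bullet)$, and one reads off that the indeterminacy of $\ell_1^*(k^2_{8j+4})$ contains $\Sq^1$ and $\Sq^2$ of the lower classes, while that of $\ell_1^*(k'^{\,2}_{8j+4})$ contains $\Sq^1\iota'_{8j+3}$ and $\Sq^2\Sq^1\iota_{8j+1}$; since those lower classes already pull back to zero, one instead tracks the indeterminacy coming from re-choosing $\ell_1$ among its honest lifts, i.e.\ by elements of $H^{8j}(\cua\P^{2j+1})$ and $H^{8j+1}(\cua\P^{2j+1})$ hitting the $K_{8j}$ and (split-off) $K'$ summands.

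Concretely, I would argue as follows. First reduce $k'^{\,2}_{8j+4}$: by~(\ref{H}) its value on $\ell_1$ is detected by $\Sq^1$ on the $(8j+3)$-dimensional coordinate of $\ell_1$ plus $\Sq^2\Sq^1$ on the $(8j+1)$-dimensional coordinate; but the $(8j+1)$- and $(8j+3)$-dimensional $k$-invariants of $\ell_1$ (namely $k^1_{8j+2}$-level data, which I've just noted vanish) force the relevant coordinates of $\ell_1$ to be null, hence $\ell_1^*(k'^{\,2}_{8j+4})=0$ with no indeterminacy needed. The genuinely substantive case is $\ell_1^*(k^2_{8j+4})$. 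Here I would use the same auxiliary device as in Step~2: extend the Adams chart and the tower by one stem (a $(8j+8)$-MPT), observe that $\binom{-2j-4}{2j+2}$ is even so the extra $k^0$-invariant $k^0_{8j+8}$ vanishes over $\cua\P^{2j+2}$, and thus $\ell_1$ extends over $\cua\P^{2j+2}$; then invoke a Spin$(8j+1)$-lift of $g$ (already established via~\cite[Theorem~1.3(b)]{dama75}) and compare MPT's as in~(\ref{diagr}), reading off from the Adams charts that the $(8j+4)$-dimensional $k^2$-invariant on the $B\Spin(8j+1)$ side maps onto $k^2_{8j+4}$ on the $B\Spin(8j)$ side. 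Since $g$ lifts to $B\Spin(8j+1)$, the composite $j_2\ell_1'$ (with $\ell_1'$ the Spin$(8j+1)$-level $E'_1$-lift) has trivial $(8j+4)$-dimensional $k^2$-invariant with trivial indeterminacy, and naturality of the $k^2$-invariant under the map of towers forces $\ell_1^*(k^2_{8j+4})=0$.

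Having killed all three $k^2$-invariants on $\ell_1$, the obstruction to a lift of $\ell_1$ across $p_2$ vanishes, so $\ell_1$ lifts to $\ell_2\colon\cua\P^{2j+1}\to E_2$, which is the claim. Note this argument needs no parity assumption on $j$: it is purely primary, using only even-ness of the two binomial coefficients $\binom{-2j-4}{2j+1}$ and $\binom{-2j-4}{2j+2}$, vanishing of odd-ish-degree cohomology of $\cua\P^{2j+1}$, and the comparison with the $B\Spin(8j+1)$-tower. The main obstacle is the $k^2_{8j+4}$ computation: a naive count of the indeterminacy of $\ell_1^*(k^2_{8j+4})$ coming from re-choices of $\ell_1$ is not obviously large enough to absorb the obstruction, which is exactly why one is forced to bring in the $B\Spin(8j+1)$-comparison~(\ref{diagr}) rather than argue inside~(\ref{tower}) alone. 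Everything else is a dimension count or a direct reading of the action formulas~(\ref{A})--(\ref{I}).
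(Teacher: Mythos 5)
Your proposal is offset by one level in the tower, and as a result it assumes exactly what the lemma asserts. In the tower~(\ref{tower}), the obstructions to lifting a map $\ell_1\colon\cua\P^{2j+1}\to E_1$ across $p_2$ are the \emph{$k^1$-invariants} $k^1_{8j+2},\dots,k^1_{8j+5},k'^{\,1}_{8j+4}$ (which live on $E_1$), not the $k^2$-invariants $k^2_{8j+3},k^2_{8j+4},k'^{\,2}_{8j+4}$ (which live on $E_2$ and obstruct the next lift, from $E_2$ to $E_3$). You write ``by construction, $\ell_1$ already kills the $k^1$-invariants $k^1_{8j+4}$ and $k'^{\,1}_{8j+4}$'' --- but this is precisely the entire content of the lemma's proof, not a given. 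Step~2 of the paper begins with a lift $\ell_1$ to $E_1$ that has \emph{a priori} no reason to have vanishing $k^1_{8j+4}$ or $k'^{\,1}_{8j+4}$; showing those two pullbacks vanish is what produces $\ell_2$. Your argument, as written, silently grants the conclusion and then goes on to discuss a different question (passage from $E_2$ to $E_3$), which the paper handles separately in Remark~\ref{trivial} and Proposition~\ref{BB}.

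The ingredients you invoke are, interestingly, the right ones for the correct level --- just misdirected. The paper's argument for $\ell_1^*(k'^{\,1}_{8j+4})=0$ does use the evenness of $\binom{-2j-4}{2j+2}$ to extend $\ell_1$ over $\cua\P^{2j+2}$ inside an $(8j+8)$-MPT, but then exploits the relation $(\sq4+w_4)k'^{\,1}_{8j+4}+(\sq6+w_6)k^1_{8j+2}=0$ holding in $E_1$ to get $(\sq4+w_4)\ell_1^*(k'^{\,1}_{8j+4})=0$, and a $\sq4$-nontriviality on $H^{8j+4}(\cua\P^{2j+2})$ plus $w_4=0$ for $g$ yields the contradiction; this is not captured by your reading of action formula~(\ref{H}). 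Likewise the $B\Spin(8j+1)$-comparison via~\cite[Theorem~1.3(b)]{dama75} and the Adams-chart naturality in~(\ref{diagr}) are indeed the key to the remaining invariant, but the paper applies them to kill $\ell_1^*(k^1_{8j+4})$, not a $k^2$-invariant. Finally, your claim that $\ell_1^*(k'^{\,2}_{8j+4})$ dies ``with no indeterminacy needed'' contradicts what the paper establishes: Proposition~\ref{BB} shows this $k^2$-invariant is the genuinely hard one, requiring a secondary-indeterminacy analysis (the whole of Step~3), and it is there --- not in the primary analysis --- that the parity hypothesis on $j$ enters. To repair the proposal, transpose the entire argument down by one level: target $k^1_{8j+4}$ and $k'^{\,1}_{8j+4}$, bring in the $(\sq4+w_4)$-relation for the primed invariant, and use the $B\Spin(8j+1)$-comparison for the unprimed one.
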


\begin{nota}\label{trivial}{\em
Let $\widetilde{\ell}_2=\ell_2 \,f$. Note that 
$\widetilde{\ell}_2^*(k^2_{8j+3})$ is trivial by dimensional reasons, 
whereas~(\ref{G}) and the 
fact that $\sq2$ acts nontrivially on $H^{8j+2}(\com \P^{4j+3};\mathbb{Z}/2)$ 
imply we can kill $\widetilde{\ell}_2^*(k^2_{8j+4})$ 
by primary indeterminacy. Moreover,~(\ref{H}) shows that this killing of
$\widetilde{\ell}_2^*(k^2_{8j+4})$ does not modify 
$\widetilde{\ell}_2^*(k'^{\, 2}_{8j+4})$. Also,~(\ref{I}) shows that the 
$k^3$-invariant of any possible lifting $\widetilde{\ell}_3:\com 
\P^{4j+3}\rightarrow E_3$ of $gf $ can be killed by primary indeterminacy. 
}\end{nota}

Step 3. {\em Secondary indeterminacy}.
We now prove the next result, which coupled with Remark~\ref{trivial}
implies Theorem~\ref{fin}.

\begin{proposicion}\label{BB}
There is a lifting $\widetilde{\ell}'_2: \com \P^{4j+3}\rightarrow E_2$ of $gf$ 
in~(\ref{tower}) that has trivial 
$k'^{\, 2}_{8j+4}$-invariant.
\end{proposicion}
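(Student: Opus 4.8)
The plan is to start from the lifting $\widetilde{\ell}_2 = \ell_2 f\colon \com\P^{4j+3}\to E_2$ provided by Lemma~\ref{provisional}, whose $k^2_{8j+3}$- and $k^2_{8j+4}$-invariants can be disposed of as noted in Remark~\ref{trivial}, and to argue that the remaining obstruction $\widetilde{\ell}_2^*(k'^{\,2}_{8j+4}) \in H^{8j+4}(\com\P^{4j+3};\mathbb{Z}/2)$ can be killed by a suitable change of lifting. The class $k'^{\,2}_{8j+4}$ is, by its definition in the MPT~(\ref{tower}) and the relation~(\ref{H}), a secondary cohomology operation built on the relations $\sq 2\sq 1 \sq 2 = \sq 1 \sq 2 \sq 2 $ type (or, more precisely, on the primary relation recorded in~(\ref{H}) together with whatever defining relation produced $k'^{\,1}_{8j+4}$ via~(\ref{D})). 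So the crux is to evaluate this secondary operation on the relevant generator of $H^{8j+4}(\com\P^{4j+3})$, modulo its indeterminacy, and show the value is $0$.

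First I would recall that the indeterminacy of $\widetilde{\ell}_2^*(k'^{\,2}_{8j+4})$, as we range over all liftings $\widetilde{\ell}'_2$ of $gf$ to $E_2$ (equivalently, over all liftings $\widetilde{\ell}_1$ to $E_1$ of the already-chosen lift and the homotopies realizing the trivialization of the $k^1$- and $k^2$-invariants feeding into it), is the subgroup of $H^{8j+4}(\com\P^{4j+3};\mathbb{Z}/2)$ generated by the images under the stable operations appearing in~(\ref{D}) and~(\ref{H}) — that is, by $\sq 4$ and $\sq 2\sq 1$ applied to classes in degrees $8j$ and $8j+1$, plus the contribution of $w_4$ of the bundle $g$ (which vanishes since $g$ factors through $B\Spin$, indeed $w_4(g)=0$ was used already in Step~2 to kill $\ell_1^*(k'^{\,1}_{8j+4})$). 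Since $H^*(\com\P^{4j+3};\mathbb{Z}/2)$ is a truncated polynomial ring on the $2$-dimensional generator $c$, with $\sq 2(c^m) = m\,c^{m+1}$ and $\sq 1 = 0$, the operations $\sq 1$ and $\sq 2\sq 1$ vanish identically; so the only potential indeterminacy is $\sq 4$ of the class in degree $8j$, i.e.\ $\sq 4(c^{4j}) = \binom{4j}{2} c^{4j+2}$, which is $0\bmod 2$ precisely because $4j\equiv 0\bmod 4$. Hence the indeterminacy is trivial and I must show the actual value of the secondary operation is $0$.

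To evaluate the secondary operation I would use the map of MPT's comparison already set up, and in particular the functoriality under $f\colon \com\P^{4j+3}\to\cua\P^{2j+1}$ together with the factorization of the stable normal bundle through $B\Spin$. The point is that $\ell_2$ is defined over $\cua\P^{2j+1}$, where the relevant cohomology classes are pulled back from degrees that are multiples of $4$; the secondary operation in question, applied to a class pulled back from $\cua\P^{2j+1}$, can be computed there, and a dimension/indeterminacy count on $\cua\P^{2j+1}$ (whose cohomology is a truncated polynomial ring on a $4$-dimensional class) shows it lands in a group where it must vanish — unless $j$ is odd, in which case one needs the more delicate argument. This is where the hypothesis that $j$ is odd enters: I would invoke a known evaluation of the secondary operation associated to the relevant Spin relation — the operation is (a summand of) the one detecting the generator of $\pi_{8j+4-8j}$-type elements in the stunted projective space fiber, and reads off as $\sq 4$ composed with a secondary $\sq 2$-Bockstein-type operation — and feed in the value of $\sq 2$ on $H^{8j+2}(\com\P^{4j+3};\mathbb{Z}/2)$, namely $\sq 2(c^{4j+1}) = (4j+1) c^{4j+2} = c^{4j+2} \ne 0$ when... actually the cleverage goes the other way: because $j$ is odd, a binomial coefficient governing the value (something like $\binom{2j+1}{?}$ or $\alpha$-function identity as in the Davis–Mahowald verifications earlier in Section~\ref{normal}) works out so that the secondary operation, together with its primary correction terms from~(\ref{D}) and~(\ref{H}), sums to zero.

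**The main obstacle**, as the authors flag, is precisely this last evaluation: computing the nontrivial secondary cohomology operation $k'^{\,2}_{8j+4}$ on $\com\P^{4j+3}$ and verifying it vanishes for odd $j$. Concretely I expect the work to consist of (i) identifying the secondary operation from its defining relation in the MPT (tracking the $\sq 4$, $w_4$, $\sq 6$, $w_6$ terms in the $k^2_{8j+7}$-type relations used in Step~2), (ii) using the Spin lifting $g\colon \cua\P^{2j+1}\to B\Spin$ and the already-established lift of $g$ to $B\Spin(8j+1)$ from Step~2 to constrain the value, and (iii) a functional-operation / Peterson–Stein formula computation on $\com\P^{4j+3}$, where the oddness of $j$ makes a parity condition on an $\alpha$-function or binomial identity come out favorably, exactly paralleling the $\nu\binom{p}{2m+4}$ computations done earlier in the section for~(\ref{l1m2}). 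I would structure the proof as: (1) pin down indeterminacy = $0$ via the $\com\P$-cohomology structure and $4\mid 4j$; (2) reduce the value computation along $f$ to $\cua\P^{2j+1}$ and to the $B\Spin(8j+1)$-lift; (3) carry out the secondary-operation evaluation using a Peterson–Stein/functional-operation argument, invoking oddness of $j$ at the arithmetic step; then conclude by Remark~\ref{trivial} that this produces the desired lift to $B\Spin(8j)$, completing Proposition~\ref{fin} and hence Lemma~\ref{feeding}.
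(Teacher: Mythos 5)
Your setup is right as far as it goes: you correctly invoke Lemma~\ref{provisional} and Remark~\ref{trivial} to reduce everything to the single $k'^{\,2}_{8j+4}$-invariant, and your verification that the \emph{primary} indeterminacy of $p_2$ vanishes on $\com\P^{4j+3}$ (since $\sq1$ and $\sq2\sq1$ act trivially there) is correct and is implicit in the paper. But from that point the plan goes in the wrong direction, for two linked reasons.

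First, you conflate primary and secondary indeterminacy. You compute $\sq4(y^{4j})=\binom{4j}{2}y^{4j+2}=0$ and conclude from this that the contribution coming from~(\ref{D}) is zero. But $\sq4(y^{4j})=0$ is not a vanishing of the indeterminacy -- it is precisely the condition that lets you choose a \emph{secondary} correction: it means $y^{4j}$ is in the domain of the secondary operation $\Phi$ associated to the relation $(\sq2\sq1)\sq2+\sq1\cdot\sq4=0$, and the actual contribution to the indeterminacy of $k'^{\,2}_{8j+4}$ is $\Phi(y^{4j})$, not $\sq4(y^{4j})$. Thus your conclusion ``indeterminacy $=0$'' is unjustified, and the rest of your plan (compute the obstruction on the nose and show it vanishes) rests on it.

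Second, and more seriously, the paper's key arithmetical input runs \emph{opposite} to what you anticipate. You expect the oddness of $j$ to make some operation vanish, so that the obstruction itself is zero. The paper does not claim $\widetilde\ell_2^*(k'^{\,2}_{8j+4})=0$, and never computes it; instead it quotes \cite[Theorem~A]{GMM} that $\Phi(y^{4j})=y^{4j+2}\neq0$ for odd $j$ (with trivial indeterminacy of $\Phi$). This \emph{non}vanishing is exactly what makes the secondary indeterminacy of $k'^{\,2}_{8j+4}$ equal to all of $H^{8j+4}(\com\P^{4j+3};\bz/2)$, so that a potentially nontrivial obstruction can always be corrected by modifying the lift along the class $y^{4j}\in H^{8j}$. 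Concretely, the paper first kills $w_4$ by factoring through the fiber $F$ of $w_4\colon B\Spin\to K_4$, then builds the spaces $X$ (fiber of $(\sq2,\sq4)$) and $F_{0,2}$, the map $\gamma\colon X\to F_{0,2}$, and modified actions $\mu_1'$, $\mu_2'$; the correction term in $(\widetilde\ell_2')^*(k'^{\,2}_{8j+4})$ is $\alpha^*(a)+\alpha^*(\iota_{8j})\cdot\widetilde\ell_2^*(b)=y^{4j+2}+0$. None of this secondary-indeterminacy machinery appears in your outline, and the Peterson--Stein reduction to $\cua\P^{2j+1}$ that you propose in its place would not by itself determine $\widetilde\ell_2^*(k'^{\,2}_{8j+4})$ -- and in any case the paper's argument makes that determination unnecessary.
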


\begin{proof} We show that a nontrivial $\widetilde{\ell}_2^*(k'^{\, 2}_{8j+4})$
can be corrected through secondary indeterminacy. To this end,
we first need to make a slight adjustment in our MPT:
We already noticed that $w_4=0$ for $g$, so that this map factors 
as $$\mathbb{H}\P^{2j+1}\stackrel{g'}{\to}F\to B\Spin,$$ where
$F$ is the fiber of $w_4: B\Spin\rightarrow K_4$. Note that, since 
$\pi_4(B\Spin)= \bz$, $F$ is still $3$-connected (not $4$-connected), 
but we have killed $w_4$ in $F$. We will prove the required condition for 
$g' f$ with~(\ref{tower}) replaced by 
the induced MPT over $F$; therefore we extend accordingly our 
notations for~(\ref{tower}) to this MPT over $F$. In particular
Remark~\ref{trivial} gives us a corresponding lifting 
\begin{equation}\label{l2}
\widetilde{\ell}_2\colon\mathbb{C}\P^{4j+3}\to E_2
\end{equation}
of $g' f$, and 
we will show how to alter (if needed) its $k'^{\, 2}_{8j+4}$-invariant 
by secondary indeterminacy coming from the class 
$y^{4j}\in H^{8j}(\mathbb{C}\P^{4j+3};\mathbb{Z}/2)$,
where $y$ stands for the generator in 
$H^2(\mathbb{C}P^{4j+3};\mathbb{Z}/2)$---although 
not needed, we remark that
this is in fact the only way to correct the problematic 
$k'^{\, 2}_{8j+4}$-invariant. The next considerations are preparatory.

\smallskip
We start by expanding the MPT over $F$ (up to stage $E_2$) to the 
commutative diagram

{\scriptsize
\begin{picture}(100,163)(-70,-13)
\put(254,70){$K_{8j{+}2}{\times} K_{8j{+}3}{\times} K_{8j{+}4}{\times} 
K'_{8j{+}4}{\times} K_{8j{+}5}$}
\put(235,73){\vector(1,0){14}}
\put(215,70){$E_1$}
\put(215,120){$E_2$}
\put(215,-5){$F$}
\put(203,73){\vector(1,0){10}}
\put(222,113){\vector(0,-1){28}}
\put(222,63){\vector(0,-1){52}}
\put(224,97){$p_2$}
\put(224,24){$p_1$}
\put(191,123){\vector(1,0){20}}
\put(80,70){$K(\bz,8j){\times} K_{8j+1}{\times} K_{8j+3}$}
\put(-59,120){$K_{8j{+}1}{\times} K_{8j{+}2}{\times} K_{8j{+}3}{\times} 
K'_{8j{+}3}{\times} K_{8j{+}4}$}
\put(136,123){\vector(1,0){28}}
\put(184,96){$\rho$}
\put(180,115){\vector(0,-1){35}}
\put(55,93){$X$}
\put(40,40){$K(\bz,8j)$}
\put(60,88){\vector(0,-1){33}}
\put(52,70){$p$}
\put(260,40){$K_{8j{+}2}{\times} K'_{8j{+}4}$}
\multiput(93,45)(22,0){7}{\line(1,0){9}}
\put(245,45){\vector(1,0){10}}
\put(110,29){$(\sq 2,\sq 4)$}
\put(296.12,49){\vector(1,1){14}}
\qbezier(292,53)(291,45)(296,49)

\put(0,0){\begin{picture}(0,0)(220,-2.5)
\put(296.12,49){\vector(1,1){14}}
\qbezier(292,53)(291,45)(296,49)
\end{picture}}

\put(75,97){\vector(4,1){90}}
\put(100,107){$\gamma$}
\put(-60,93){$K_{8j{+}1}{\times} K'_{8j{+}3}$}
\put(-405,53){
\begin{picture}(0,0)
\qbezier(390,50)(408,60)(408,60)
\put(410,61){\vector(2,1){0}}
\qbezier(386,56)(382,47)(390,50)
\end{picture}}
\put(12,97){\vector(1,0){32}}
\put(28,100){$j$}
\put(-30,10){$\com \P^{4j+3}$}
\put(168,120){$F_{0,2}$}
\put(22,17){\scriptsize $y^{4j}$}
\put(10,17){\vector(2,1){36}}
\put(10,20){\vector(1,2){36}}
\put(15,53){$\alpha$}
\end{picture}}

\noindent Here the square involving $\rho$ and $p_2$ is a pull-back, so that 
$F_{0,2}$ can be thought of as 
the common fiber of (a) the composite $p_1 \,p_2$, and (b)
the middle horizontal composite 
$$K(\bz,8j){\times} K_{8j+1}{\times} K_{8j+3} \to E_1 \to 
{K_{8j{+}2}{\times} K_{8j{+}3}{\times} K_{8j{+}4}{\times} 
K'_{8j{+}4}{\times} K_{8j{+}5}}.$$
Likewise, $p\colon X \to K(\bz,8j)$ is the fiber inclusion of the 
dashed map $(\sq 2,\sq 4)$. 
The map $\gamma : X \to F_{0,2}$ exists in view of the commutativity of 
the square involving the $(\sq 2,\sq 4)$ map 
(see~(\ref{A})---(\ref{E})). 
The indicated lifting $\alpha$ of $y^{4j}$ exists since 
$(\sq 2,\sq 4)(y^{4j})=0$. 

\smallskip
Next we describe suitable variations of the principal 
actions for $p_1$ and $p_2$ in the MPT over $F$.
Recall $K(1)$ and $K(2)$ denote, respectively, 
$K(\bz,8j){\times} K_{8j+1}{\times} K_{8j+3}$ and 
$K_{8j{+}1}{\times} K_{8j{+}2}{\times} 
K_{8j{+}3}{\times} K'_{8j{+}3}{\times} K_{8j{+}4}$. We define 
$\mu^{\prime}_1:F_{0,2} {\times} E_1 \rightarrow E_1$ to be the
composite $$F_{0,2} {\times} E_1 \stackrel{\rho {\times} 1}{\longrightarrow}
K(1) {\times} E_1 \stackrel{\mu_1}{\longrightarrow} E_1.$$  We then have the
diagram

\begin{picture}(50,170)(-113,27)
\put(0,170){$K_{8j+1}{\times} K'_{8j+3} {\times} E_2$}
\put(126,177){\oval(6,7)[l]}
\put(126,173.5){\vector(1,0){25}}
\put(226,173.5){\vector(1,0){35}}
\put(238,179.5){\scriptsize $\mu_2$}
\put(225,150){\scriptsize $\mu'_2$}
\put(157,170){$K(2) {\times} E_2$}
\put(265,170){$E_2$}
\put(55,160){\vector(0,-1){30}}
\put(31,142){\scriptsize $j{\times}1$}
\put(190,160){\vector(0,-1){30}}
\put(273,160){\vector(0,-1){135}}
\put(30,115){$X\times E_2$}
\put(85,120){\vector(1,0){66.5}}
\put(110,109){\scriptsize $\gamma{\times}1$}
\multiput(80,125)(9.2,2){19}{$\cdot$}
\put(110,54){\scriptsize $\gamma{\times}1$}
\put(157,115){$F_{0,2}\times E_2$}
\put(55,105){\vector(0,-1){30}}
\put(25,87){\scriptsize $1{\times}p_2$}
\put(160,87){\scriptsize $1{\times}p_2$}
\put(278.5,87){\scriptsize $p_2$}
\put(190,105){\vector(0,-1){30}}
\put(258,167.3){\vector(4,1){0}}
\put(30,60){$X\times E_1$}
\put(85,65){\vector(1,0){66.5}}
\put(157,60){$F_{0,2}\times E_1$}
\put(55,50){\vector(0,-1){27}}
\put(31,32){\scriptsize $p{\times}1$}
\put(190,50){\vector(0,-1){27}}
\put(166,32){\scriptsize $\rho{\times}1$}
\put(10,5){$K(\bz,8j) {\times} E_1$}
\put(105,10){\vector(1,0){47.5}}
\put(105,13.7){\oval(6,7)[l]}
\put(157,5){$K(1) {\times} E_1$}
\put(265,5){$E_1$}
\put(226,10){\vector(1,0){35}}
\put(238,0){\scriptsize $\mu_1$}
\put(240,40){\scriptsize $\mu'_1$}
\put(207,50){\vector(2,-1){55}}
\end{picture}

\vspace{1cm}\noindent
The dashed map $\mu^{\prime}_2: X {\times} E_2
\longrightarrow E_2$, rendering a commutative diagram, 
exists since the composite
$$
X \times E_2 \stackrel{\gamma\times p_2}{-\!\!\!-\!\!\!\lra} F_{0,2} 
\times E_1 \stackrel{\mu_1'}{\lra} E_1 \stackrel{k^1}{\lra} K_{8j{+}2} \times 
K_{8j{+}3}\times K_{8j{+}4}\times K'_{8j{+}4}\times K_{8j{+}5}
$$
is null-homotopic as shown by the following chase of classes (where
the symbol ``$\hookrightarrow$'' stands for 
the map induced by the bottom inclusion in the diagram above):
{\small
\begin{eqnarray*}
& & k^1_{8j+2}\stackrel{\mu_1}{\longrightarrow} 1 {\otimes} k^1_{8j+2} + \sq 2 
\iota_{8j} {\otimes} 1 \hookrightarrow 1 {\otimes} k^1_{8j+2} + \sq 2 
\iota_{8j} {\otimes} 1 \stackrel{p {\times} 1}{\longrightarrow} 1 {\otimes} 
k^1_{8j+2}
\stackrel{1 {\times} p_2}{\longrightarrow} 0 \\ & &
k^1_{8j+3}\stackrel{\mu_1}{\longrightarrow} 1 {\otimes} k^1_{8j+3} + \sq 2 
\iota_{8j+1} {\otimes} 1 \hookrightarrow 1 {\otimes} k^1_{8j+3}
\stackrel{p {\times} 1}{\longrightarrow} 1 {\otimes} k^1_{8j+3}
\stackrel{1 {\times} p_2}{\longrightarrow} 0 \\ & &
k^1_{8j+4}\stackrel{\mu_1}{\longrightarrow} 1 {\otimes} k^1_{8j+4} + 
\sq 1 \iota_{8j+3} {\otimes} 1 + \sq 2 \sq 1
\iota_{8j+1} {\otimes} 1 \hookrightarrow 1 {\otimes} k^1_{8j+4}
\stackrel{p {\times} 1}{\longrightarrow} 1 {\otimes} k^1_{8j+4}
\stackrel{1 {\times} p_2}{\longrightarrow} 0 \\ & &
k'^{\, 1}_{8j+4}\stackrel{\mu_1}{\longrightarrow} 1 {\otimes} 
k'^{\, 1}_{8j+4} + \sq 4 \iota_{8j} {\otimes} 1  \hookrightarrow 1 
{\otimes} k'^{\, 1}_{8j+4} + \sq 4 \iota_{8j} {\otimes} 1  
\stackrel{p {\times} 1}{\longrightarrow} 1 {\otimes} k'^{\, 1}_{8j+4}
\stackrel{1 {\times} p_2}{\longrightarrow} 0 \\ & &
k^1_{8j+5}\stackrel{\mu_1}{\longrightarrow} 1 {\otimes} k^1_{8j+5} + 
\sq 4 \iota_{8j+1} {\otimes} 1  \hookrightarrow 1 {\otimes} k^1_{8j+5}
\stackrel{p {\times} 1}{\longrightarrow} 
1 {\otimes} k^1_{8j+5}\stackrel{1 {\times} p_2}{\longrightarrow} 0
\end{eqnarray*}}
Note that, in the last two rows, summands with a $w_4$ are trivial by
construction of $F$. (For readers familiar with~\cite{tom}:
the point of this explicit calculation is to make sure that all terms 
in the chase lie in wedge portions of the relevant spaces, a point argued
in~\cite{tom} just from easy dimensional reasons.)

\smallskip
We are finally in a position to explicitly indicate how to kill (if needed),
through secondary indeterminacy, a nontrivial 
$k'^{\, 2}_{8j+4}$-invariant for~(\ref{l2}). Start with the composite 
$$\widetilde{\ell}_1'=\mu_1^{\prime}((\gamma \alpha) {\times}(p_2 
\widetilde{\ell}_2)):\com \P^{4j+3} \to E_1$$---the 
(primary) modification of $\widetilde{\ell}_1=
p_2 \widetilde{\ell}_2$ by the class $y^{4j}$. From the 
diagram that defines $\mu_2^{\prime}$ we have, first, that an explicit lifting
of $\widetilde{\ell}_1'$ is given by the map
$$\widetilde{\ell}_2'=\mu_2^{\prime}( \alpha {\times} \widetilde{\ell}_2):
\com \P^{4j+3} \to E_2$$ and, second, that 
equation~(\ref{H}) translates, by dimensional reasons, into 
$$(\mu^{\prime}_2)^*(k'^{\,2}_{8j+4}) = 1 \otimes k'^{\,2}_{8j+4} + a 
\otimes 1 + \iota_{8j} \otimes b$$ where $a \in H^{8j+4}(X;\mathbb{Z}/2)$ 
and $b \in H^4(E_2;\mathbb{Z}/2)$, and where the former element satisfies
\begin{equation}\label{cdlos}
j^*(a) = \sq 1 \iota'_{8j+3} \otimes 1 + \sq 2\sq 1 \iota_{8j+1} 
\otimes 1.
\end{equation}
In particular, the $k'^{\,2}_{8j+4}$-invariant of $\widetilde{\ell}_2'$ 
is computed as
\begin{eqnarray*}
(\widetilde{\ell}_2')^*(k'^{\,2}_{8j+4})
&=& ( \alpha {\times} \widetilde{\ell}_2)^*(1 \otimes k'^{\,2}_{8j+4} + 
a \otimes 1 + \iota_{8j} \otimes b) \\ 
&=&
\widetilde{\ell}_2^*(k'^{\,2}_{8j+4}) + \alpha^*(a) + 
\alpha^*(\iota_{8j}) \cdot \widetilde{\ell}_2^*(b).
\end{eqnarray*}
Therefore we will be done once we establish the two relations
\begin{equation}\label{theend}
\alpha^*(a)=y^{4j+2}\mbox{ \ \ and \ \ } \widetilde{\ell}_2^*(b)=0.
\end{equation}

By construction
$\alpha^*(a) \in \Phi(y^{4j})$, where $\Phi$ is the secondary operation 
determined by the class $a \in H^{8j+4}(X;\mathbb{Z}/2)$ (subject to~(\ref{cdlos}))
in the diagram

\begin{picture}(50,115)(20,-23)
\put(100,0){$\com \P^{4j+3}$}
\put(220,0){$K(\bz,8j)$}
\put(354,0){$K_{8j+2}{\times} K_{8j+4}$}
\put(110,60){$K_{8j+1}{\times} K_{8j+3}$}
\put(240,60){$X$}
\put(300,60){$K_{8j+4}$}
\put(155,3){\vector(1,0){57}}
\put(285,3){\vector(1,0){63}}
\put(255,63){\vector(1,0){37}}
\put(205,63){\vector(1,0){28}}
\put(247,50){\vector(0,-1){32}}
\put(155,14){\vector(2,1){80}}
\put(177,-12){\small $y^{4j}$}
\put(285,-12){\small $(\sq2,\sq4)$}
\put(270,70){\small $a$}
\put(215,70){\small $j$}
\put(180,35){\small $\alpha$}
\end{picture}

\noindent
This operation is associated to the relation 
$(\sq 2 \sq 1) \cdot \sq 2 + \sq 1 \cdot \sq 4 = 0$
(for our purposes this is a relation on elements coming from 
integral classes, so that $\sq1$ acts trivially on 
them). But in~\cite[Theorem~A]{GMM} 
it is shown that $\Phi(y^{4j})=y^{4j+2}$ for odd $j$
(with trivial indeterminacy), giving the first equality in~(\ref{theend}). The 
second equality in~(\ref{theend}) is much simpler. 
Assume $b\not=0$. Since $H^4(B\Spin;\bz)=\bz$ generated by $p_1/2$, 
it follows that $H^4(F;\bz)=\bz$ generated by $p_1$.
Then the mod 2 reduction of $p_1$ must correspond to $b$ under 
$$\mathbb{Z}/2=H^4(F;\mathbb{Z}/2)\stackrel{p_1^*}{\approx}H^4(E_1;\mathbb{Z}/2)
\stackrel{p_2^*}{\approx}H^4(E_2;\mathbb{Z}/2).$$ Thus, 
$\widetilde{\ell}_2^*(b) = p_1(-4(j+2)H) =  (-4(j+2))y^2 = 0$.
\end{proof}

We close this section with a final word about the optimality of the 
obstruction theory methods. As we have seen, 
the basis for Lemma~\ref{feeding} comes (rather indirectly) from 
the existence of diagrams of the form~(\ref{diagram}), settled here 
by means of modified Postnikov towers---and 
by direct linear algebra constructions
in~\cite{MM}. Of course, the smaller $d$ one can use in~(\ref{diagram}),
the better are the results produced by the inductive rounds
in Section~\ref{newcomp}. Now, in Remark~\ref{power}
we observed the optimality of some of the embeddings in the conclusion
of Lemma~\ref{feeding} {\em for $e=1$}. In particular, we get the optimality of 
the corresponding $d$ used in~(\ref{diagram}). The point we want to stress 
here is that the latter optimality holds {\em for any $e$}, in view of
the compatibility of the embeddings $L_{n,e}\hookrightarrow L_{m,e}$ under
the canonical projections $L_{t,e}\to L_{t,e+1}$. The net outcome 
to remark is that the homotopy lifting in Proposition~\ref{fin} 
turns out to be optimal when $j$ is one less than a power of 2---the critical 
$\mu=2$ case in~(\ref{tom}). Together with the corresponding embeddings of
lens space, this infinite family of optimal liftings 
justifies the several pages of MPT manipulations.

\section{Immersion vs.~embedding dimension}\label{comp}
In this final section of the paper we focus 
on certain subtle points arising when comparing the behavior
of the (immersion and) embedding dimensions of $2^e$-torsion lens spaces,
as $e$ varies. The case $t=2^e$ of the following easy observation was 
mentioned in~\cite{nonimm} as a convenient way
to take advantage of known immersion results for (real and complex) 
projective spaces when studying 
the immersion problem for lens spaces.

\begin{hecho}\label{comparar}{
Any codimension-$k$ Euclidean immersion for 
$\mathbb{C} \P^n$ brings for free a co\-dimension-$k$ Euclidean immersion 
for any $t$-torsion lens space $L^{2n+1}(t)$. Likewise, when $t'$ divides $t$,
any codimension-$k$ Euclidean immersion of $L^{2n+1}(t)$ 
brings for free a codimension-$k$ Euclidean immersion of $L^{2n+1}(t')$.
}\end{hecho}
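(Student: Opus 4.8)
The plan is to prove Fact~\ref{comparar} by exhibiting, in each of the two asserted situations, an explicit smooth submersion between the relevant manifolds and then pulling back (or, more precisely, composing with) the hypothetical immersion. Recall the basic principle: if $p\colon M\to N$ is a submersion and $\iota\colon N\immto\mathbb{R}^{n+k}$ is an immersion with $\dim N = n$, then $\iota\circ p\colon M\immto\mathbb{R}^{n+k}$ need \emph{not} be an immersion unless $p$ is a local diffeomorphism on each tangent space of the right rank. The correct statement is the one actually used in the literature on lens-space immersions: a \emph{covering projection} $p\colon M\to N$ (an \'etale map, hence a local diffeomorphism) composed with an immersion of $N$ yields an immersion of $M$ of the same codimension, since $\dim M = \dim N$ and $d(\iota\circ p)_x = d\iota_{p(x)}\circ dp_x$ is injective as a composite of injective maps. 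This is the mechanism I would invoke throughout.

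For the first assertion, I would recall that $L^{2n+1}(t)$ is by definition the quotient of $S^{2n+1}$ by the free diagonal $\mathbb{Z}/t$-action inside $S^1$, and that the full $S^1$-action has quotient $\mathbb{C}\P^n$; hence there is a covering projection $q\colon L^{2n+1}(t)\to \mathbb{C}\P^n$ (a $t$-fold covering, realized as $S^{2n+1}/(\mathbb{Z}/t)\to S^{2n+1}/S^1$ when one remembers the fibers are circles — more precisely, $q$ fits into the fibration $S^1/(\mathbb{Z}/t)\to L^{2n+1}(t)\to \mathbb{C}\P^n$ whose fiber is again a circle, so $q$ is a submersion with $1$-dimensional fibers, \emph{not} a covering). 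I need to be a little careful here: $\dim L^{2n+1}(t)=2n+1$ while $\dim\mathbb{C}\P^n=2n$, so the composite $\iota\circ q$ drops rank by one and is not an immersion on the nose. The fix, which is the standard one, is to compose with the \emph{canonical submersion} differently: one uses that a codimension-$k$ immersion $\mathbb{C}\P^n\immto\mathbb{R}^{2n+k}$ together with the standard embedding of the circle fiber yields, via the bundle structure, an immersion $L^{2n+1}(t)\immto\mathbb{R}^{2n+k}\times\mathbb{R}$? — that would change the codimension. The genuinely correct and clean route is Lemma/Fact from \cite{nonimm}: the projection $L^{2n+1}(t)\to\mathbb{C}\P^n$ pulls back the normal bundle of an immersion, and since the tangent bundle of $L^{2n+1}(t)$ is $q^*(\tau_{\mathbb{C}\P^n})\oplus(\text{trivial line})$ and an immersion $\mathbb{C}\P^n\immto\mathbb{R}^{2n+k}$ gives a stable normal bundle inverse, Hirsch's immersion theorem (the bundle-monomorphism criterion) then produces the desired immersion of $L^{2n+1}(t)$ in $\mathbb{R}^{2n+1+k}$, i.e.\ of the same \emph{codimension} $k$. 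So the key step is: translate "immersion of codimension $k$" into "stable bundle monomorphism $\tau\hookrightarrow\underline{n+k}$" via Hirsch, observe this datum pulls back along $q$ and absorbs the extra trivial tangent line, and translate back.

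For the second assertion, with $t'\mid t$, the situation is cleaner: there is a genuine covering projection $L^{2n+1}(t)\to L^{2n+1}(t')$, since $\mathbb{Z}/t'\subset\mathbb{Z}/t\subset S^1$ gives $S^{2n+1}/(\mathbb{Z}/t)\to S^{2n+1}/(\mathbb{Z}/t')$ as a quotient by the free residual $(\mathbb{Z}/t)/(\mathbb{Z}/t')$-action, hence a $(t/t')$-fold covering and in particular a local diffeomorphism. Wait — but the direction is backwards relative to what we want: a covering $M\to N$ lets us \emph{lift} immersions of $N$ to $M$, so from an immersion of $L^{2n+1}(t')$ we would get one of $L^{2n+1}(t)$, which is the \emph{opposite} of the claim. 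The claim as stated goes from $t$ down to $t'$. So here I would instead invoke the bundle-theoretic criterion once more: an immersion $L^{2n+1}(t)\immto\mathbb{R}^{2n+1+k}$ is equivalent (Hirsch) to a stable monomorphism $\tau_{n,t}\hookrightarrow\underline{2n+1+k}$, and under the covering projection $\pi\colon L^{2n+1}(t)\to L^{2n+1}(t')$ one has $\pi^*\tau_{n,t'}\cong\tau_{n,t}$; the point is that the monomorphism \emph{descends} because both tangent bundles are the $\mathbb{Z}/2^e$-equivariant (resp.\ the smaller group's) versions of the same bundle on $S^{2n+1}$, equivalently both are determined by the same bundle data on $\mathbb{C}\P^n$ (namely $(n+1)\eta$ minus a trivial factor, as recalled in Section~\ref{normal}), so a monomorphism realized over the common base $\mathbb{C}\P^n$ serves for every torsion level. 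Concretely: the stable normal bundle of $L^{2n+1}(t)$ is $-(n+1)$ times the pullback of the Hopf bundle from $\mathbb{C}\P^n$ (stated in Section~\ref{normal}), and this description is \emph{independent of $t$}; an immersion of codimension $k$ corresponds to a geometric-dimension-$k$ representative of this stable class, which being pulled back from $\mathbb{C}\P^n$ (or rather, being the same formula) works simultaneously for all $L^{2n+1}(t')$.

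The main obstacle I anticipate is purely bookkeeping-theoretic rather than conceptual: one must be scrupulous about the off-by-one between $\dim\mathbb{C}\P^n=2n$ and $\dim L^{2n+1}(t)=2n+1$, so that "codimension $k$" is preserved rather than "target dimension $\mathbb{R}^{2n+k}$". The resolution is to phrase everything through Hirsch's theorem (immersion $\Leftrightarrow$ stable bundle monomorphism $\tau\hookrightarrow\underline{m}$ for $m>\dim$) and through the fact, recalled explicitly in Section~\ref{normal}, that the tangent bundle of $L_{m,e}$ differs from the pullback of $\tau_{\mathbb{C}\P^m}$ by a trivial summand — that trivial summand exactly accounts for the extra dimension and is automatically absorbed. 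A secondary subtlety is getting the direction of the second statement right (from larger torsion $t$ to its divisor $t'$, not the reverse): the covering projection goes $L^{2n+1}(t)\to L^{2n+1}(t')$, which on its own only lifts immersions upward; the descent in the stated direction must instead be read off from the $t$-independence of the relevant bundle data over $\mathbb{C}\P^n$. Once these two points are pinned down, each assertion is a one-line consequence of Hirsch's theorem.
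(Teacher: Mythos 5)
Your treatment of the first assertion is essentially right: the projection $L^{2n+1}(t)\to\mathbb{C}\P^n$ is a circle bundle and not a local diffeomorphism, so the naive composition fails, and the correct route is Hirsch's criterion together with $\tau_{L^{2n+1}(t)}\cong q^*\tau_{\mathbb{C}\P^n}\oplus\varepsilon^1$ (equivalently, the stable normal bundle of the lens space is the pullback $q^*$ of that of $\mathbb{C}\P^n$, as the paper recalls in Section~\ref{normal}). A codimension-$k$ normal bundle $\nu'$ for an immersion $\mathbb{C}\P^n\immto\mathbb{R}^{2n+k}$ pulls back to a rank-$k$ inverse $q^*\nu'$ of $\tau_{L^{2n+1}(t)}$ (the trivial line is absorbed), and Hirsch gives $L^{2n+1}(t)\immto\mathbb{R}^{2n+1+k}$. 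This is surely the intended argument; the paper itself states Fact~\ref{comparar} without proof as an ``easy observation'' taken from~\cite{nonimm}.

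Your second assertion has a genuine error: the covering direction is backwards. If $t'\mid t$ then $\mathbb{Z}/t'$ is a \emph{subgroup} of $\mathbb{Z}/t$, so the quotient map goes from the orbit space of the smaller group to that of the larger, i.e.\ $L^{2n+1}(t')=S^{2n+1}/(\mathbb{Z}/t')\to S^{2n+1}/(\mathbb{Z}/t)=L^{2n+1}(t)$ is the covering projection (of degree $t/t'$), not $L^{2n+1}(t)\to L^{2n+1}(t')$. Compare the paper's own reference in Section~\ref{start} to ``the canonical projection $L^{2m+1}(2^e)\to L^{2m+1}(2^ek)$,'' which goes from small torsion to large. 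Once this is corrected, the assertion is immediate by the very composition mechanism you announced at the outset: composing the local diffeomorphism $L^{2n+1}(t')\to L^{2n+1}(t)$ with an immersion $L^{2n+1}(t)\immto\mathbb{R}^{2n+1+k}$ gives an immersion of $L^{2n+1}(t')$ in the same codimension. Your substitute ``descent via $t$-independence of the bundle data over $\mathbb{C}\P^n$'' is not a valid argument as written: knowing that $q_t^*(-(n+1)H)$ admits a rank-$k$ representative over $L^{2n+1}(t)$ does not, by itself, produce one for $q_{t'}^*(-(n+1)H)$ over $L^{2n+1}(t')$, since that representative need not be pulled back from $\mathbb{C}\P^n$ and geometric dimension does not ``descend'' to a common base. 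What \emph{does} work is pulling the representative back along the corrected covering $L^{2n+1}(t')\to L^{2n+1}(t)$ and applying Hirsch again, which is just a bundle-level restatement of the composition.
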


Unfortunately, the first (and potentially the second) statement(s) 
in Fact~\ref{comparar} readily fails when ``immersion'' is replaced 
by ``embedding''. For instance, $\mathbb{C} \P^1$ 
embeds in $\mathbb{R}^3$, but as observed in Remark~\ref{n12}, no 
$3$-dimensional lens space embeds in $\mathbb{R}^4$. 
After discussing how this problem arises and how it could be mended,
we deal, in Remark~\ref{utilidad}  below, with the 
potential usefulness of such a possibly mended embedding analogue.
For the time being we note that this initial problem could just as well be  
a facet of the following very peculiar situation: 

\begin{hecho}\label{parallel}{\em
{\it With the
exception of $S^1$ and $\P^7$, which are well known to 
be parallelizable manifolds, $3$-dimensional lens spaces are the only 
parallelizable manifolds among lens spaces.} In more detail, 
Lemma~\ref{spingen} and Proposition~\ref{spin3} (below) give the
parallelizability of all $3$-dimensional lens spaces
(the same argument gives the well-known parallelizability of
any compact orientable $3$-dimensional manifold), 
whereas it is proved in~\cite{litang} that, except for $\P^7$ and 
the following list of stably parallelizable (but not parallelizable) manifolds,
no other lens space of dimension $\ge 5$ is stably parallelizable.
The exceptional lens spaces
are $L^{2p-1}(p)$, for $p$ an odd prime.
}\end{hecho}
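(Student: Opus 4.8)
The plan is to prove Fact~\ref{parallel} by assembling three separate pieces: the elementary positive statements about $3$-dimensional lens spaces, the cited stable‑parallelizability classification from \cite{litang}, and the bridge between stable parallelizability and parallelizability in the relevant dimensions. First I would dispose of the $3$-dimensional case. By Lemma~\ref{spingen} every $L^3(2^e)$ is spin (indeed every orientable $3$-manifold is), hence stably parallelizable, and a standard obstruction‑theoretic argument shows that a stably parallelizable $3$-manifold is actually parallelizable: the obstruction to trivializing the tangent bundle over the $3$-skeleton, after stabilizing, lives in $H^3(M;\pi_2(SO))=H^3(M;0)=0$, while the only genuinely $3$-dimensional obstruction is detected by the Euler class, which vanishes since $\chi(M^3)=0$ for any closed odd‑dimensional manifold. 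This is precisely the content to be recorded in Proposition~\ref{spin3}, and the same argument applies verbatim to any closed orientable $3$-manifold. Since \cite{Hatcher} (via \cite[Example~3E2]{Hatcher}) gives the classification/description of $3$-dimensional lens spaces as orientable, the conclusion follows for \emph{all} of them, not merely the $2$-torsion ones.

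Next I would invoke the result of \cite{litang}: among lens spaces of dimension $\ge 5$, the only stably parallelizable ones are $\P^7$ and the family $L^{2p-1}(p)$ for $p$ an odd prime. Here one must be slightly careful that "lens space" in \cite{litang} covers the full generality of $L^{2m+1}(q)$ for arbitrary $q$ (not just prime‑power torsion), so that the statement of Fact~\ref{parallel} as phrased—ranging over all lens spaces—is legitimate; I would simply cite this and take it as a black box, as the excerpt permits. Combined with the $3$-dimensional discussion and the classical parallelizability of $S^1$, this already pins down exactly which lens spaces are stably parallelizable.

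Finally I would upgrade "stably parallelizable" to "parallelizable" for the dimension‑$\ge 5$ exceptional cases to show they are \emph{not} parallelizable (except $\P^7$). The relevant input is that a stably parallelizable closed $n$-manifold is parallelizable if and only if it admits a nowhere‑zero vector field—automatic when $\chi(M)=0$, hence always in odd dimensions—\emph{provided} the span is large enough; more precisely, one uses the known fact (Bredon–Kosinski type results, or simply the Hurwitz–Radon/Adams bound together with the structure of the relevant $K$-theory) that the exceptional lens spaces $L^{2p-1}(p)$ have span strictly less than $2p-1$. The cleanest route is to note that a stably parallelizable manifold of dimension $n \not\equiv 0 \pmod 2$ that is parallelizable would in particular have trivial reduced $KO$, and then to cite the $\widetilde{KO}$- (or $\widetilde{K}$-) computations for lens spaces—e.g.\ in \cite{KS}—to see that $L^{2p-1}(p)$ has nontrivial reduced real $K$-theory, ruling out parallelizability; the sole exception is $\P^7$, whose parallelizability is classical. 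The main obstacle is thus not any single hard computation but rather the bookkeeping across sources: making sure the stably‑parallelizable classification of \cite{litang}, the $3$-dimensional argument, and the $KO$-theoretic distinction between stable and honest parallelizability all refer to the same class of spaces and fit together without gaps—in particular handling the boundary cases in low dimensions and verifying that the exceptional $L^{2p-1}(p)$ genuinely fail to be parallelizable rather than merely failing the naive stable test.
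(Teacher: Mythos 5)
Your decomposition (dispose of dimension $3$ by obstruction theory, cite \cite{litang} for dimension $\ge 5$) matches the paper's, but there are two genuine gaps in your proposed justifications.

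For the $3$-dimensional case, the paper's Proposition~\ref{spin3} is cleaner and actually correct: the tangent bundle is a rank-$3$ spin bundle over a $3$-complex, and $B\Spin(3)=BS^3$ is $3$-connected, so its classifying map is null-homotopic. Your Euler-class version of the argument does not quite work as stated. The Euler class of an oriented rank-$3$ bundle over a $3$-manifold lies in $H^3(M;\mathbb{Z})$, but its vanishing only provides a nowhere-zero section, i.e.\ span $\ge 1$; it is not the top obstruction to full triviality. The obstruction to triviality in degree $3$ lies in $H^3(M;\pi_2(SO(3)))$, which vanishes because $\pi_2$ of a Lie group is zero, not because $\chi(M)=0$. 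After a nowhere-zero section, one still has a rank-$2$ oriented bundle to trivialize, classified by a class in $H^2(M;\mathbb{Z})$ which is nonzero for a $3$-dimensional lens space of torsion $>1$; so the "Euler class plus stable triviality" route, as you sketched it, leaves a real gap. The connectivity of $B\Spin(3)$ closes all of this at once.

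The more serious problem is the last step. First, it is unnecessary: the Fact attributes to \cite{litang} both that $L^{2p-1}(p)$ is stably parallelizable and that it is not parallelizable, so you do not need to supply a fresh argument. Second, the argument you propose is simply false. Parallelizability of $M$ does not imply $\widetilde{KO}(M)=0$; it says only that the single class $[TM]-\dim M$ vanishes in $\widetilde{KO}(M)$. The two parallelizable examples singled out in the Fact already refute your criterion: $\widetilde{KO}(S^1)=\mathbb{Z}/2\neq 0$ and $\widetilde{KO}(\P^7)\neq 0$ (it is cyclic of order $8$), yet both manifolds are parallelizable. So citing the $\widetilde{KO}$-computations of \cite{KS} cannot rule out parallelizability of $L^{2p-1}(p)$; you would need a genuinely different obstruction (or simply cite \cite{litang}, as the paper does).
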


\begin{proposicion}\label{spin3}
Any $3$-dimensional spin bundle over a $3$-dimensional complex is trivial.
\end{proposicion}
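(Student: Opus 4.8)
The plan is to reduce the statement to the known structure of principal $\Spin(3)$-bundles over a low-dimensional base, using $\Spin(3)\cong S^3\cong SU(2)$ together with the connectivity of the classifying space. First I would recall that rank-$3$ spin bundles over a complex $X$ are classified by homotopy classes of maps $X\to B\Spin(3)$, and $\Spin(3)\cong SU(2)\cong S^3$. The classifying space $BS^3=B\Spin(3)$ is $3$-connected, with $\pi_4(BS^3)\cong\pi_3(S^3)\cong\mathbb{Z}$ (generated, up to sign, by the symplectic Pontryagin / second Chern class), and $\pi_5(BS^3)\cong\pi_4(S^3)\cong\mathbb{Z}/2$. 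Since $X$ has dimension $3$, obstruction theory places all the primary obstructions to null-homotoping a map $X\to BS^3$ in the groups $H^{i}(X;\pi_i(BS^3))$ for $i\le 3$, and these all vanish because $BS^3$ is $3$-connected. Hence any map $X\to BS^3$ is null-homotopic and the bundle is trivial.

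Concretely, the key steps in order would be: (1) identify $\Spin(3)$ with $S^3$ and note $B\Spin(3)$ is $3$-connected; (2) set up the cell-by-cell extension of a prospective null-homotopy $X\to BS^3$, skeleton by skeleton, observing the obstruction to extending over the $i$-cells lies in $H^{i}(X;\pi_{i-1}(S^3))=0$ for $i\le 3$ since $\pi_0(S^3)=\pi_1(S^3)=\pi_2(S^3)=0$; and (3) conclude the classifying map is homotopically trivial, so the bundle is (fiber-homotopy, hence—being a principal $G$-bundle—genuinely) trivial. Alternatively one can phrase this bundle-theoretically: a rank-$3$ spin bundle $E\to X$ has $w_1=w_2=0$ automatically; over the $2$-skeleton it is trivial since $w_1=w_2$ are the only obstructions there; and the single remaining obstruction to trivializing over the $3$-skeleton lives in $H^3(X;\pi_2(S^3))=H^3(X;0)=0$. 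Either route is elementary.

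The main (and only) subtlety is making sure the obstruction groups are computed with the correct coefficients, i.e.\ that the relevant homotopy group of the structure group $S^3=\Spin(3)$—namely $\pi_2(S^3)=0$—is what controls the top obstruction over a $3$-complex, and that there is no contribution from $\pi_4(BS^3)=\mathbb Z$ or higher because the base has no cells above dimension $3$. This is immediate once one writes the tower correctly, so there is no real obstacle; the argument is purely a connectivity count. (Note this is exactly the ingredient invoked, via Proposition~\ref{spin3} and Lemma~\ref{spingen}, to see that all $3$-dimensional lens spaces are parallelizable, since for such a closed orientable $3$-manifold the tangent bundle is a $3$-dimensional spin bundle over a $3$-complex.)
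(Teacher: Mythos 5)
Your argument is correct and is exactly the paper's proof, just spelled out in more detail: the paper simply observes that $B\Spin(3)=BS^3$ is $3$-connected, which makes every map from a $3$-complex null-homotopic. The obstruction-theoretic bookkeeping you add is implicit in that one-line connectivity statement.
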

\begin{proof} This follows from the fact that $B\Spin(3)=
BS^3$ is $3$-connected. \end{proof}

Here is an explicit way of looking (within Haefliger's 
metastable range) at the problems arising in the quest for an embedding 
analogue of the second statement of
Fact~\ref{comparar}.
Consider the configuration space $F_{\mathbb{Z}/2^e}(S^{2n+1},2)$
consisting of pairs in $S^{2n+1}\times S^{2n+1}$ generating 
different $\mathbb{Z}/2^e$ orbits. As observed 
in~\cite[Lemma~5.1]{symmotion}, for $k\ge3(n + 1)$, Haefliger's
characterization~\cite[Th\'eor\`eme $1'$]{Haefligerstable} 
for the existence of an embedding $L^{2n+1}(2^e)\subset R^k$ 
decodes into the existence of a map $\alpha\colon 
F_{\mathbb{Z}/2^e}(S^{2n+1},2)\to S^{k-1}$ satisfying 
\begin{equation}\label{haefliger}
\alpha(\omega x,y)=\alpha(x,y)=\alpha(x,\omega y)
\mbox{ \ \ and \ \ }\alpha(x,y)=-\alpha(y,x)
\end{equation}
for $(x,y)\in F_{\mathbb{Z}/2^e}(S^{2n+1},2)$, and $\omega\in\mathbb{Z}/2^e$.
Now, although any map satisfying~(\ref{haefliger}) for all 
$\omega\in\mathbb{Z}/2^e$ will certainly satisfy the same requirements for all 
$\omega\in\mathbb{Z}/2^{e-1}$, the strict inclusion 
$F_{\mathbb{Z}/2^e}(S^{2n+1},2)\subset F_{\mathbb{Z}/2^{e-1}}(S^{2n+1},2)$
can only be used to interpret, as an extension problem, 
the embedding analogue of the second statement in Fact~\ref{comparar}.

\smallskip
Despite the above problems, it might still be the case that a restricted  
embedding version of Fact~\ref{comparar} holds.
The following considerations are meant to shed evidence on such a possibility.
To this end, it will be convenient to use the notation 
$\E(M)$ for the smallest
dimension of the Euclidean space where the given manifold $M$ can be embedded. 
The information we use below about these numbers 
is taken from~\cite{dontablas}, in the case of real projective spaces, and 
from~\cite{AH,amiya}, in the case of complex projective spaces.

\smallskip
First of all, the considerations in Fact~\ref{parallel} indicate that,
in asking whether an embedding analogue 
of Fact~\ref{comparar} holds,
it might be fair to exclude the case $n=1$.
In fact, we should exclude the whole family $\alpha(n)=1$ as, in such a case,
$\E(\mathbb{C} \P^{n})=4n-1$, but according to Theorem~\ref{enepot},
$\E(L^{2n+1}(2^e))=4n+1$ for all 
$e\ge 1$---one dimension higher than what 
the embedding analogue of Fact~\ref{comparar} would anticipate.

\smallskip
Assuming now $\alpha(n) > 1$, there does not seem to be an 
immediate problem, at least at the outset (but perhaps mainly due to a lack of 
information), for a potential embedding analogue of Fact~\ref{comparar}. 
For instance, when $n=6$ (the first such case with even $n$),
$\E(\mathbb{C} \P^6)\in\{21,22\}$ whereas $\E(\P^{13})\in\{22,23\}$ 
---three out of the four possibilities being 
compatible with a potential embedding analogue of Fact~\ref{comparar}. 
 
\smallskip
But for odd $n$ the situation is much nicer, and
the problem discussed above for $\alpha(n)=1$ is replaced by the following
families\footnote{Only projective spaces are considered in this sample;
trying to include lens spaces gives an excellent motivation for  
sharpening the embeddings in Theorem~\ref{niam}, and in particular for
trying to close up the small
gaps indicated in the second paragraph after that theorem.} 
that are fully compatible with
a possible embedding version of Fact~\ref{comparar}. 
Indeed:
\begin{itemize}
\item[{(a)}] For 
$n=2^e+1\ge 3$, $\E(\mathbb{C} \P^n)=2^{e+2}+1$, whereas
$\E(\P^{2n+1})\le 2^{e+2}+2$ when $e\ge 2$.

\vspace{-.3mm}\item[{(b)}] For 
$n=2^e+3\ge 7$, $\E(\mathbb{C} \P^n)\in 2^{e+2}+\{7,8\}$, 
whereas $\E(\P^{2n+1})=2^{e+2}+7$ when $e\ge 3$.

\vspace{-.3mm}\item[{(c)}] For 
$n=2^e+5\ge 13$, both $\E(\mathbb{C} \P^n)$ and $\E(\P^{2n+1})$ 
are of the form $2^{e+2}+\{15,16\}$.

\vspace{-.3mm}\item[{(d)}] For 
$n=2^e+7\ge 15$, $\E(\mathbb{C} \P^n)\in 2^{e+2}+\{21,22,23\}$, 
whereas $\E(\P^{2n+1})\le 2^{e+2}+22$ when $e\ge 4$.

\vspace{-.3mm}\item[{(e)}] For 
$n=2^e+9\ge 25$, both $\E(\mathbb{C} \P^n)$ and $\E(\P^{2n+1})$
are of the form $2^{e+2}+\{31,32\}$.
\end{itemize}

\begin{nota}\label{utilidad}{\em
With the above as indirect evidence, and in order to illustrate its 
potential usefulness, we observe that an embedding 
version of the first statement in Fact~\ref{comparar} 
would imply, in view of Theorem~\ref{main},  that
the ($4n-\alpha(n)-1$)-dimensional Euclidean embedding 
of $\mathbb{C} \P^n$ (for odd $n>1$) in~\cite{amiya} would not only 
be optimal for $\alpha(n)=2$, but would also produce 
a corresponding ($4n-\alpha(n)$)-dimensional Euclidean
embedding of any lens space $L^{2n+1}(2^e)$, optimal when $e\ge \alpha(n)=2$,
in view of Theorem~\ref{main}.
}\end{nota}

In the direction of the potential optimality of the embeddings 
in~\cite{amiya}, we observe
that the immersion (and consequently embedding) 
dimension of $\mathbb{C} \P^3$ is $9$. On the one hand,
Remark~\ref{utilidad} already 
mentions the embedding $\mathbb{C}\P^3\subset\mathbb{R}^{9}$. 
On the other, the normal bundle $\nu$ of a hypothetical 
codimension-2 immersion of $\mathbb{C} \P^3$ 
would necessarily be the realification of a complex line bundle. 
Over $\mathbb{C} \P^3$ this would be of the form
$\nu=(H^d)_{\mathbb{R}}$, the realification of the $d$-th power 
of the Hopf bundle ($d\in\mathbb{Z}$). 
In particular, the 
first Pontryagin class of $\nu$ would easily be computed (using for 
instance~\cite[Corollary~15.5]{MS}) to be 
$d^2a^2$, where $a\in H^2(\mathbb{C} \P^3;\mathbb{Z})$ 
stands for the Euler class of $H$. 
However this is incompatible with
the calculation of the (tangential) Pontryagin class 
$p_1(\mathbb{C} \P^3)=4a^2$ 
in~\cite[page 178]{MS} and the fact that 
$p_1(\mathbb{C} \P^3)+p_1(\nu)=0$ (recall that $\mathbb{C} \P^3$ 
is torsion-free).

\begin{nota}\label{utilidad2}{\em
As for the usefulness of a possible embedding analogue of the second statement
in Fact~\ref{comparar}, we remark that this would immediately 
imply the validity of Theorem~\ref{enepot} from the known validity for $e=1$.
}\end{nota}

\begin{nota}\label{eje7}{\em
$\P^7$ is an important benchmark giving an exceptional situation 
both for Fact~\ref{parallel} and Theorem~\ref{main}. 
This manifold is known not to embed in $\mathbb{R}^8$~(\cite{Hantzsche}), 
but has currently best known embedding in 
$\mathbb{R}^{12}$~(\cite{mah64}), and even in $\mathbb{R}^{10}$, if the 
embedding is only required to be 
piecewise linear~(\cite{Rees}). 
It is interesting to observe that a (topological, at least) embedding 
\begin{equation}\label{potential}
L^7(2^e)\subset\mathbb{R}^{10}
\end{equation}
could potentially be obtained for any $e$ by quotienting out
the action of $\mathbb{Z}/2^e$
both in the fiber and total space of the Hopf fibration $S^3\to S^7\to S^4$.
Indeed, this produces a fibration $L^3(2^e)\to L^7(2^e)\to S^4$ 
that, when restricted to the hemispheres of $S^4$, leads to a 
splitting of $L^7(2^e)$ into two parts, each fiber homeomorphic 
to $L^3(2^e)\times D^4$ and, therefore 
(Remark~\ref{n12}), embedding in $\mathbb{R}^9$. Following
the philosophy in the previous section, if 
these embeddings were (homotopy) compatible in the common intersection, 
then there would be a reasonable
chance of getting the embedding in~(\ref{potential})---Rees' 
embedding $P^7\subset\mathbb{R}^{10}$ would seem to suggest that such 
compatibility can be attained for $e=1$. But we have not 
been able to make this idea work for general $e$ (notice that~(\ref{potential})
would be fully 
compatible with the possibilities discussed in Remark~\ref{utilidad}). 
Unfortunately, the relevance of a possible embedding~(\ref{potential}) within 
the inductive proof of Theorem~\ref{niam} is admittedly limited;
the reader will check that the only situation where~(\ref{potential}) 
would produce a better embedding than those described
in Theorem~\ref{niam} (besides the case $n=3$ in the second column of
Table~\ref{tabla1}) is for improving $(e,n,d)=(\leq2,7,26)$, the 
special triples in Theorem~\ref{niam}, to $(e,n,d)=
(\leq2,7,25)$---pretty much as described at the beginning of 
Remark~\ref{power}.
}\end{nota}

We have just mentioned the possibility of extending Rees' topological 
embedding $\P^7\subset\mathbb{R}^{10}$ to 7-dimensional $2^e$-torsion 
lens spaces.  
In this connection, it is well to keep in mind Sanderson's conjecture
in~\cite{sanderson} that the smooth embedding dimension of $\P^7$ is $11$.

\begin{ejemplo}\label{ejegen}{\em
Consider lens spaces $L^{2n+1}(2^e)$ with $n=2^t+1$ for $t\geq2$ (the case
of $t\leq1$ is described in Remark~\ref{n12}). For $e\geq2$, 
i.e.~high-torsion lens spaces, the 1-dimension
gap we leave for $\E(L^{2n+1}(2^e))=2^{t+2}+\delta(e)$, with 
$\delta(e)\in\{2,3\}$, is a shifted version of the known gap
for $e=1$: according to~\cite{dontablas} the best current 
information gives $\E(\P^{2n+1})=2^{t+2}+\delta(1)$, with $\delta(1)\in\{1,2\}$.
}\end{ejemplo}

\smallskip
{\footnotesize
}

\bigskip\bigskip

Jes\'us Gonz\'alez\quad {\tt jesus@math.cinvestav.mx}

\medskip
{\sl Departamento de Matem\'aticas, CINVESTAV--IPN (on sabbatical leave)

Apartado Postal 14-740 M\'exico City, C.P. 07000, M\'exico}

\medskip
and {\sl Centro de Investigaci\'on en Matem\'aticas

Callej\'on Jalisco s/n, Valenciana, Guanajuato, Gto, C.P. 36240, 
M\'exico}

\bigskip\medskip

Peter Landweber\quad {\tt landwebe@math.rutgers.edu}

\medskip
{\sl Department of Mathematics, Rutgers University

110 Frelinghuysen Rd,  Piscataway, NJ 08854-8019, USA}

\bigskip\medskip

Thomas Shimkus\quad {\tt shimkust2@scranton.edu}

\medskip
{\sl Department of Mathematics, University of Scranton

Scranton, PA 18510-4666, USA}

\end{document}